\def\multiset#1#2{\ensuremath{\left(\kern-.3em\left(\genfrac{}{}{0pt}{}{#1}{#2}\right)\kern-.3em\right)}}
\newtheorem{theorem}{Theorem}
\newtheorem{lemma}[theorem]{Lemma}
\newtheorem{corollary}[theorem]{Corollary}
\newtheorem{proposition}[theorem]{Proposition}
\theoremstyle{definition}
\newtheorem{definition}{Definition}
\newtheorem{example}[definition]{Example}
\newtheorem{remark}[definition]{Remark}
\newtheorem{question}[definition]{Question}
\DeclareMathOperator{\Gal}{Gal}
\DeclareMathOperator{\Aut}{Aut}
\DeclareMathOperator{\disc}{disc}
\DeclareMathOperator{\res}{res}
\DeclareMathOperator{\sgn}{sgn}
\newcommand{\C}{\mathbb{C}}
\newcommand{\T}{\mathbb{T}}
\newcommand{\Proj}{\mathbb{P}}
\newcommand{\w}{\omega}
\newcommand{\z}{\zeta}
\newcommand{\la}{\langle}
\newcommand{\ra}{\rangle}
\newcommand{\inverselimitb}
\DeclarePairedDelimiter\floor{\lfloor}{\rfloor}
\title{A Unique Chief Series in the arboreal Galois Group of Belyi Maps}
\author{Wayne Peng}
\address{Department of Mathematics,
        University of Rochester
        Rochester, NY 14627}
\email{jpeng4@ur.rochester.edu}
\date{September 2020}
\begin{document}

\maketitle
\begin{abstract}
    We give a complete description of the normal subgroups of arboreal Galois groups of Belyi maps. The normal groups form a unique chief series. We also carefully compute the discriminate of the iterate of a polynomial minus an algebraic number, which allows us to predict when a such discriminate is a perfect square in the base field or intermediate field for a postcritically finite polynomials (PCF). As a consequence we are able to find another PCF cubic polynomial that has the same arboreal Galois group as the one of Belyi maps.
\end{abstract}
\section{Introduction}
Let $K$ be a number field with $\alpha\in K$, and let $f\in K[z]$ be a polynomial of degree $d\geq 3$. The Galois group of 
\[
f^n(z)-\alpha
\]
where $f^n$ is the $n$-th iterate of $f$. Such a Galois group is called arboreal Galois group or $n$-th dynamical Galois group, denoted by $\Gal_f^n(\alpha)=\Gal(f^n(z)-\alpha/K)$. If $\alpha$ is not an image of $\alpha$ and branch points of $f$ under $f^n$ for some positive integer $n$, then one can made a Galois action an act on a $d$-ary tree. The vertices of the $d$-ary tree is made up of the inverse image of $\alpha$ under $f^{-n}$ for $n=1,2,\ldots$, and where we draw an edge on two vertices $x$ and $y$ if $f(x)=y$. With a proper chosen $\alpha$, this tree is rooted at $\alpha$ with the Galois action acting faithfully on the tree. Sometime we call $\alpha$ the base point of the dynamical system of $f$.

The study of arboreal Galois group becomes popular for a while due to its application in number theory. Odoni \cite{Odoni1985} has showed that the description of the dynamical Galois group give rise to application on the density of primes division in certain dynamically defined sequences (see also \cite{Rafe2008,HJM2015,Looper2019} and \cite{Juul2019}). Generically the $n$-th dynamical Galois group of a degree $d$ polynomial is isomorphic to the group of automorphism of a $d$-ary tree $\Aut(T_n)$ which is isomorphic to the $n$-fold wreath product $[S_d]^n$ of symmetric group $S_d$ on $d$ letters (see \cite{juulthesis,2Odoni1985}). However for a specified chosen $f$ and $\alpha$ the dynamical Galois group can be a lot of smaller than the full wreath product. For example let $f$ be a powering map $x^d$ or a degree $d$ Chebyshev polynomial. These two examples are the simplest examples of \textbf{post-critical finite polynomial}, abbv. \textbf{PCF}. A polynomial or rational function $f$ PCF if all of its critical points $c$, i.e. $f'(c)=0$, is preperiodic. Jones, Pink and many other authors \cite{Rafe2013,pink2013profinite,GottesmanTang2010,BHL2017,RafeMichelle2014} shows that $\Gal_f^n(\alpha)$ has unbounded index inside $\Aut(T_n)$ as $n$ goes to infinity. The idea of the proof is realizing the $n$-th dynamical Galois group as a specialization of the arithmetic dynamical Galois group $\Gal(f^n-t/K(t))$, where $t$ is transcendental over $K$. The latter group is known as the arithmetic Galois group and and can be embedded in the profinite monodromy group $\pi_1^{\text{\'{e}t}}(\Proj^1_K\setminus P)$, where $P$ is strictly postcritical orbit.

The first case \cite{B-F-H-J-Y-2017} that has been explicit computed is the dynamical Galois group of $-2x^3+3x^2$ at a properly chosen base point $\alpha$. The polynomial is a degree $3$ Belyi map, a map that has three fixed critical points. They have shown that the $n$-th dynamical Galois group is isomorphic to $E_n^2$ (defined in~\ref{sec:2.1}). Then \cite{BEK2020} shows that the degree $3$ Belyi map is not a unique example. In fact all odd degree Belyi map with two exceptions are isomorphic to groups that has structure similar to $E_n^2$. Unlike the technique using in \cite{BEK2020} the first paper use pure algebraic argument to show the isomorphism. Following their observation we shows the following.
\begin{theorem}
Let $K$ be a number field. Let $f(z)=2z^3-3z^2+1$, and let $\alpha\in K$. Suppose there exist primes $\mathrm{p}$ and $\mathrm{q}$ of $K$ lying above $2$ and $3$ such that either $v_\mathrm{q}(\alpha)=1$ or $v_\mathrm{q}(1-\alpha)=1$, and either $v_\mathrm{p}(\alpha)=1$ or $v_\mathrm{p}(1-\alpha)=1$. Then for each $n\geq 1$,
\begin{enumerate}
    \item The polynomial $f^n$ is irreducible over $K$.
    \item The $n$-th dynamical Galois group $\Gal_f^n(\alpha)$ is isomorphic to $E_n^2$.
\end{enumerate}
\end{theorem}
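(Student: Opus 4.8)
The plan is to establish both parts at once by induction on $n$, \emph{propagating the hypotheses} from $\alpha$ to its preimages. The backbone is the factorization
\[
f^n(z)-\alpha=2\prod_{f(\beta)=\alpha}\bigl(f^{n-1}(z)-\beta\bigr),
\]
which holds over the splitting field of $f(w)-\alpha$, together with the two identities $f(z)=(z-1)^2(2z+1)$ and $f(z)-1=z^2(2z-3)$. These record that $0$ and $1$ are the finite critical points and form the $2$-cycle $f(0)=1$, $f(1)=0$, and they let me read off the $\mathrm{p}$- and $\mathrm{q}$-adic sizes of $\beta$ and $1-\beta$ from those of $\alpha$ and $1-\alpha$. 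I would also note the symmetry $f(1-z)=1-f(z)$: the substitution $z\mapsto 1-z$ conjugates the system to itself while interchanging $\alpha\leftrightarrow 1-\alpha$ and the critical values $0\leftrightarrow 1$, and it preserves irreducibility and the Galois group. Hence in each local hypothesis I may assume it is $v_{\mathrm{q}}(1-\alpha)=1$ and $v_{\mathrm{p}}(\alpha)=1$ that hold.

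Base case and propagation. At $\mathrm{q}$ the Newton polygon of $f(z)-\alpha=2z^3-3z^2+(1-\alpha)$ is a single segment of slope $-\tfrac13$ (since $v_{\mathrm{q}}(2)=0$, $v_{\mathrm{q}}(3)>0$, $v_{\mathrm{q}}(1-\alpha)=1$), so $\mathrm{q}$ is totally ramified of degree $3$ and $f-\alpha$ is irreducible over $K$; the unique prime $\mathrm{q}'\mid\mathrm{q}$ of $K(\beta)$ then satisfies $v_{\mathrm{q}'}(\beta)=1$. At $\mathrm{p}$ the polygon has a flat segment of length $2$, and $(\beta-1)^2(2\beta+1)=\alpha$ with $v_{\mathrm{p}}(\alpha)=1$ forces $v(\beta-1)=\tfrac12$ on the two unit roots; thus $\mathrm{p}$ carries a \emph{ramified} quadratic factor, so $\disc(f-\alpha)=108\,\alpha(1-\alpha)$ has odd valuation at $\mathrm{p}$ and is a nonsquare, giving $\Gal_f^1(\alpha)\cong S_3\cong E_1^2$; moreover the prime $\mathrm{p}'\mid\mathrm{p}$ cut out by that quadratic factor satisfies $v_{\mathrm{p}'}(1-\beta)=1$. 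Consequently $\bigl(K(\beta),\beta\bigr)$ again satisfies the hypotheses of the theorem, which is exactly what the induction needs. Note that the two primes play genuinely different roles: $\mathrm{q}$ supplies irreducibility and $\mathrm{p}$ supplies the nonsquare discriminant, and neither substitutes for the other.

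Inductive step. Fix a root $\beta$ of $f(w)=\alpha$. Applying the induction hypothesis to $\bigl(K(\beta),\beta\bigr)$ gives that $f^{n-1}(z)-\beta$ is irreducible over $K(\beta)$ with $\Gal_f^{n-1}(\beta)\cong E_{n-1}^2$. Since $f(w)-\alpha$ is irreducible over $K$, the roots $\beta_1,\beta_2,\beta_3$ are permuted transitively, the factors $f^{n-1}(z)-\beta_i$ are conjugate, and each is irreducible over its field of definition; irreducibility of $f^n(z)-\alpha$ over $K$ follows, proving part $(1)$. For part $(2)$ I would place $\Gal_f^n(\alpha)$ in the short exact sequence
\[
1\longrightarrow N\longrightarrow \Gal_f^n(\alpha)\longrightarrow S_3\longrightarrow 1,
\]
where $N$ embeds into $\Gal_f^{n-1}(\beta_1)\times\Gal_f^{n-1}(\beta_2)\times\Gal_f^{n-1}(\beta_3)\cong\bigl(E_{n-1}^2\bigr)^3$, and identify this extension with the recursive description of $E_n^2$ from Section~\ref{sec:2.1}.

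The hard part is this last identification, namely the \emph{lower} bound: that $\Gal_f^n(\alpha)$ fills out all of $E_n^2$ and does not collapse to a proper subgroup. Equivalently, I must show that the quadratic subextension appearing at level $n$ is genuinely new, independent of those produced at levels below $n$. This is where the discriminant computation advertised in the abstract does the work: from
\[
(f^n)'(z)=\prod_{k=0}^{n-1}6\,f^k(z)\bigl(f^k(z)-1\bigr)
\]
one evaluates $\disc(f^n-\alpha)$ as a resultant, and I would track only its valuation at $\mathrm{p}\mid 2$, showing it is odd exactly when $v_{\mathrm{p}}(\alpha)=1$. Because this square class is ramified at $\mathrm{p}$ in a way not seen at lower levels, it cannot be accounted for by the earlier quadratic extensions; combined with the containment $\Gal_f^n(\alpha)\subseteq E_n^2$ coming from the PCF shape of $f$, this forces equality and completes the induction.
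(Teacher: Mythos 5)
Your treatment of part (1) and of the propagation of the local hypotheses is sound and is essentially the paper's own argument: the Newton polygon at $\mathrm{q}$ gives total ramification of degree $3$ and irreducibility (the paper phrases this as $f^n(z)-\alpha$ being Eisenstein at $\mathrm{q}$, via $f(z)\equiv -(z-1)^3$ or $-z^3\bmod\mathrm{q}$), and the polygon at $\mathrm{p}$ produces the ramified quadratic factor and shows that $(K(\beta),\beta)$ again satisfies the hypotheses; this is exactly Lemma~\ref{lemma:ramification index}.

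The gap is in your lower bound for part (2), and it is twofold. First, in principle: discriminant square classes only control the quadratic subextensions of $K_f^n(\alpha)$, i.e.\ the image of $\Gal_f^n(\alpha)$ in an elementary abelian $2$-group, so they can never certify that a subgroup of $E_n^2$ is all of $E_n^2$ rather than, say, a proper subgroup of index divisible by $3$ having the same sign characters. You silently drop the prime $\mathrm{q}\mid 3$ from the inductive step (``I would track only its valuation at $\mathrm{p}\mid 2$''), but it is indispensable there: the paper uses $e(\mathrm{q}'/\mathrm{q})=3^n$ together with $e(\mathrm{p}'/\mathrm{p})=2^n$ to get $6^n\mid[K_f^n(\alpha):K]$ and hence, by Cauchy, elements of order $3$ and order $2$ in each relative group $\Gal(K_f^n/K_f^{n-1})$; the identification with $E_n^2$ is then completed by the group-theoretic maximality argument of \cite{B-F-H-J-Y-2017} (normality of the relative group plus transitivity of the action spreads a single $3$-cycle and a product of two transpositions through the whole fibre), not by discriminants. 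Second, and more damning, your proposed engine is actually vacuous for this polynomial: since $f(\{0,1\})=\{0,1\}$, Lemma~\ref{lemma:periodicdiscriminant} (or a direct induction from Equation~\ref{eq:disc}) shows that the square class of $\disc(f^n-\alpha)$ equals that of $3\alpha(1-\alpha)$ for every odd $n$ and is trivial for every even $n$; in particular $v_\mathrm{p}(\disc(f^2-\alpha))$ is already even, and no quadratic extension ``not seen at lower levels'' ever appears after $n=1$. This recurrence of square classes is precisely what furnishes the \emph{upper} bound $\Gal_f^n(\alpha)\subseteq E_n^2$ in Theorem~\ref{main theorem 2}; it cannot simultaneously supply the lower bound.
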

Our key observation is using Equation~\ref{eq:disc} to find when the discriminant of $f^n-\alpha$ is a perfect square in the base field or an intermediate field. Such an observation give us a universal embedding of dynamical Galois group of PCF map.
\begin{theorem}
Let $f$ be a PCF polynomial over a number field $K$, and suppose $\alpha$ is not periodic. Let $\mathcal{C}_f$ be the set of all critical points of $f$. Let $L$ be the minimum integer such that $f^L(\mathcal{C}_f)$ is a periodic set, and let $O$ be the minimum positive integer such that $f^{L+O}(\mathcal{C}_f)=f^{L}(\mathcal{C}_f)$.
\begin{enumerate}
    \item Suppose the degree of $f$ is odd, and $L\leq 1$. Then $\Gal_f^n(\alpha)$ is a subgroup of $E_n^{2O}$.
    \item Otherwise $\Gal_f^n(\alpha)$ is a subgroup of $F_n^{(m_1,m_2)}$ where
    \[
    (m_1,m_2)=\begin{cases}
    (O+1,1),\quad L=0\text{ and }O\text{ is even;}\\
    (O+2,2),\quad L=0\text{ and }O\text{ is odd;}\\
    (L+2O-1,L-1),\quad L>1\text{ and }d\text{ is odd;}\\
    (L+O,L),\quad \quad L>1\text{, $d$ is odd, and $a_f$ is a perfect square or $O$ are even;}\\
    (L+O+1,L+O),\quad L>1\text{, $d$ is odd and $a_f$ is not a perfect square and $O$ is odd.}
    \end{cases}
    \]
\end{enumerate}
\end{theorem}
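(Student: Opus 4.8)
The plan is to translate the perfect-square conditions coming from Equation~\ref{eq:disc} into sign (parity) constraints on the image of $\Gal_f^n(\alpha)$ inside $\Aut(T_n)=[S_d]^n$, and then to match the resulting constraints against the defining relations of the groups $E_n$ and $F_n$ from Section~\ref{sec:2.1}. The starting point is the standard fact that, for a separable $g\in F[z]$, the reduced Galois group over $F$ lies in the alternating subgroup exactly when $\disc(g)\in(F^\times)^2$. Applied level by level in the tree, a single square condition on $\disc(f^k-\alpha)$ over a field $F$ forces the image of the absolute Galois group of $F$ into the kernel of the level-$k$ sign homomorphism of $\Aut(T_k)$. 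Since each target group $E_n^{2O}$ and $F_n^{(m_1,m_2)}$ is, by its definition in Section~\ref{sec:2.1}, precisely the subgroup of $\Aut(T_n)$ cut out by a prescribed family of such sign homomorphisms, the whole theorem reduces to determining, for every $n$ and every relevant field (the base field $K$ and the intermediate fields obtained by adjoining one level of the tree), the square class of $\disc(f^n-\alpha)$.

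First I would reduce the discriminant modulo squares. From Equation~\ref{eq:disc} one obtains the telescoping recursion
\[
\disc(f^n-\alpha)\equiv a_f^{\,e_n}\,\disc(f^{n-1}-\alpha)^{d}\,\prod_{c\in\mathcal{C}_f}\bigl(f^{n}(c)-\alpha\bigr)\pmod{(K^\times)^2},
\]
so that the only genuinely new factor introduced at level $n$ is the critical-value product $P_n:=\prod_{c\in\mathcal{C}_f}(f^{n}(c)-\alpha)$, together with a controlled power of the leading coefficient $a_f$. The parity of $d$ now dictates the whole picture. When $d$ is even the term $\disc(f^{n-1}-\alpha)^{d}$ is automatically a square, so the square class of $\disc(f^n-\alpha)$ is governed by $P_n$ and $a_f$ alone; when $d$ is odd the square classes accumulate, and $\disc(f^n-\alpha)$ has the class of $a_f^{\,\epsilon}\prod_{j\le n}P_j$. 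This dichotomy is the source of the split between part (1) and the odd/even branches of part (2).

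Next I would feed in the PCF hypothesis. Because $f^{L}(\mathcal{C}_f)$ is periodic of period $O$, the critical-value products $P_j$ are eventually periodic in $j$ with preperiod $L$ and period $O$; assembling their square classes over complete cycles shows that the square class of $\disc(f^n-\alpha)$ is itself eventually periodic, but with period $2O$ rather than $O$, since a completed cycle contributes a fixed class that is a square precisely when an even number of cycles has elapsed. This period-doubling, refined by the preperiodic tail of length $L$, by the parity of $O$, and by whether $a_f\in(K^\times)^2$, is exactly the combinatorial data recorded by the exponent $2O$ in $E_n^{2O}$ and by the pair $(m_1,m_2)$ in $F_n^{(m_1,m_2)}$. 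Checking that each case in the statement reproduces the corresponding family of sign constraints — in particular that the short-tail odd-degree case $L\le 1$ yields the symmetric pattern defining $E_n^{2O}$, while the remaining configurations yield the asymmetric pattern defining $F_n^{(m_1,m_2)}$ — is then a finite, if intricate, verification.

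The main obstacle is the field bookkeeping that distinguishes $m_1$ from $m_2$. A critical-value product $P_n$ may fail to be a square over $K$ yet become a square over the intermediate field generated by an earlier level of the tree, and pinning down the exact level at which this first occurs is what fixes the two parameters; the parity of $O$ (which governs whether the period genuinely doubles to $2O$) and the square class of $a_f$ interact with this in a way that produces the five sub-cases. The essential point to nail down is tightness: one must verify that beyond the first completed critical cycle no further \emph{independent} square condition is created, so that $\Gal_f^n(\alpha)$ is constrained by exactly the listed sign homomorphisms and therefore embeds into $F_n^{(m_1,m_2)}$ (respectively $E_n^{2O}$) rather than into a strictly larger group. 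Establishing this independence, and correctly reading off $(m_1,m_2)$ from the preperiod/period/parity data, is where the real work lies.
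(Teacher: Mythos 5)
Your proposal follows essentially the same route as the paper: reduce $\disc(f^n-\alpha)$ modulo squares via the recursion in Equation~\ref{eq:disc}, use periodicity of the critical orbit to locate the levels and intermediate fields where it becomes a square (this is exactly the content of Lemma~\ref{lemma:periodicdiscriminant}), translate each square condition into a sign constraint, and induct level by level to match the recursive definitions of $E_n^{2O}$ and $F_n^{(m_1,m_2)}$. The only quibble is your closing worry about ``tightness'': since the theorem asserts only a containment $\Gal_f^n(\alpha)\subseteq E_n^{2O}$ (resp.\ $F_n^{(m_1,m_2)}$), any \emph{additional} square conditions would only shrink the group further and cannot threaten the conclusion, so that verification is unnecessary.
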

As a consequence, the index of dynamical Galois group of PCF map is unbounded in $\Aut(T_n)$ as $n$ goes infinite, follows immediately after this theorem. This provide a pure algebraic proof of this important theorem.

Due to $E_n^2$ represents the dynamical Galois group of a large family of polynomials, it makes the author curious about the structure of such groups. One of the purpose of this paper is to provide a pure algebraic point of views on proving that the rank of $E_n^2$ is bounded as $n$ goes to infinity. The rank of a group $G$, denoted by $d(G)$, is the cardinality of minimal generating set the group. It is well-known that $d(G)\geq d(G/N)$ where $N$ is a normal subgroup of $G$. Jones uses this lower bound and shows that the abelianization of $[S_d]^n$ is the $n$-fold direct product of $C_2$, so the arithmetic Galois groups of PCF maps, which is finite generated group, cannot be of finite index in $[S_d]^n$. 

To obtain an upper bound of $d(G)$ by subgroups or quotient groups of $G$ is difficult. In~\cite{DetomiLucchini2003} the authors have shown that $d(G)=d(G/N)$ if and only if $G/N$ has not too many factors $G$-equivalent to $N$. We don't need such a strong theorem. Instead \cite{DFLA1998} shows that $d(G)=\max\{2,d(G/N)\}$ if $G$ is not cyclic and $N$ is the unique minimum subgroup of $G$. Thus if we can find a unique minimum subgroup $N$ in $E_n^2$ such that we can again find a unique minimum subgroup in the quotient $E_n^2/N$ and if this process can be repeated until the quotient subgroup is small enough to compute, then we are able to bound the rank of $E_n^2$. The similar concept of the above process is chief series of a group $G$, which is a sequence of normal subgroups $N_i$ of $G$ satisfying
\[
1=N_1\triangleleft N_2\triangleleft\cdots\triangleleft N_k=G
\]
and each chief factor $N_{i+1}/N_{i}$ is a minimum normal subgroup of the quotient group $G/N_i$ (see~\cite{Isaacs2009} for details). Our result, in Section~\ref{sec:the rank of E_n}, is the following.
\begin{theorem}
Let $d$ be odd. The rank of the group $E_n^2(d)$ is $2$ for all $n$. Moreover $E_n^2$ has a unique chief series. 
\end{theorem}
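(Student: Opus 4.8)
The plan is to proceed by induction on $n$, combining the recursive description of $E_n^2$ from Section~\ref{sec:2.1} with the rank criterion of \cite{DFLA1998}, which asserts that $d(G) = \max\{2, d(G/N)\}$ whenever $G$ is noncyclic and $N$ is its unique minimal normal subgroup. The whole argument thus rests on a single structural assertion: that $E_n^2$, together with each of the quotients occurring along the chief series, has a \emph{unique} minimal normal subgroup. Granting this, the two conclusions come together at once — the rank bound by iterating the criterion, and the uniqueness of the chief series, because a unique minimal normal subgroup at every stage determines every term.

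The first and central task is to pin down the minimal normal subgroup of $E_n^2$ and to prove it is unique. I would use the natural surjection $E_n^2 \twoheadrightarrow E_{n-1}^2$ that forgets the deepest level of the $d$-ary tree, with kernel $K_n$ recording the level-$n$ data. The plan is to describe $K_n$ as a module over the quotient $E_{n-1}^2$ and to locate inside it a single smallest invariant piece. Because $d$ is odd, the postcritically finite structure of the Belyi map forces the new level-$n$ contributions to be governed by sign characters, and the transitivity of $E_n^2$ on the level-$n$ vertices should single out a diagonal invariant line $N$; the crux is to show that every nonzero invariant subgroup of $K_n$ contains this line, so that $N$ is the unique minimal normal subgroup. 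Carrying the same analysis out layer by layer then yields the complete description of the normal subgroup lattice promised in the abstract, and in particular a unique minimal normal subgroup in each quotient along the series.

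Granting this uniqueness, the rank statement follows by iterating the criterion. The group $E_n^2$ is noncyclic — visibly so from its definition, and for $n \geq 2$ because it surjects onto the noncyclic group $E_1^2$ — so \cite{DFLA1998} applies at the top; since each successive quotient again has a unique minimal normal subgroup, the criterion may be reapplied down the chain, reducing the computation to the rank of the top chief factor. That factor has rank at most $2$, and the floor of $2$ built into the criterion then forces $d(E_n^2) = 2$ for every $n$. At the same time, a unique minimal normal subgroup at each stage means the chain $1 = N_1 \triangleleft N_2 \triangleleft \cdots \triangleleft N_k = E_n^2$ is pinned down term by term, so no competing chief series can exist.

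The main obstacle is exactly this uniqueness, equivalently the full classification of the normal subgroups of $E_n^2$. A self-similar, wreath-like group can a priori carry many invariant lines inside $K_n$, so the heart of the matter is to prove that the single transitive Galois orbit on the level-$n$ vertices collapses all candidate submodules to one. I expect this to require careful tracking of how the sign data attached to distinct vertices are permuted under conjugation, and it is here that the hypothesis that $d$ is odd is indispensable: for even $d$ extra invariant submodules survive, several minimal normal subgroups appear, and the chief series is no longer unique.
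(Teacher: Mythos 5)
Your high-level skeleton --- locate a unique minimal normal subgroup at each stage, iterate the bound $d(G)\leq\max\{2,d(G/M)\}$ down the surjection $E_n^2\twoheadrightarrow E_{n-1}^2$, and read off uniqueness of the chief series --- is the paper's strategy. But the entire content of the theorem sits in the structural assertion you defer, and where you do commit to specifics they are wrong. The unique minimal normal subgroup of $E_n^2$ is not a ``diagonal invariant line'' inside the kernel $K_n=\ker(\res_{n-1})$: it is the full product $M_n=\{((a_i)_i;1)\mid a_i\in A_d\}\cong A_d^{d^{n-1}}$ of alternating groups in the leaf-level coordinates. A diagonal $\{(a,\dots,a)\}$ is not even normal, since conjugation by $((b_i);1)$ twists each coordinate by a different $b_i$. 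Moreover $K_n$ contributes \emph{two} chief factors, not one: $M_n$ itself, and then the elementary abelian layer $\ker(\res_{n-1})/M_n$, which the paper handles separately by passing to $E_n^2/M_n\cong C_2\wr\Aut(T_{n-1})$ and invoking Lemma~\ref{lemma:C2AutTn}. Your ``single smallest invariant piece per level'' picture collapses these two layers, so even granting your module analysis the chief series you would produce has the wrong shape.

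The most serious gap is logical rather than descriptive: proving that every nonzero invariant subgroup \emph{of $K_n$} contains the candidate does not establish that the candidate is the unique minimal normal subgroup of $E_n^2$, because a normal subgroup $N\triangleleft E_n^2$ need not lie in $K_n$. One must show that every nontrivial normal subgroup meets $M_n$ nontrivially, and this is exactly what the paper's commutator argument (Lemma~\ref{lemma:scsneqc}, used inside Lemma~\ref{lemma:E_n->E_n-1}) supplies: given $1\neq\sigma\in N$ it constructs $\mathbf{c}\in\ker(\res_{n-1})$ with $[\sigma,\mathbf{c}]\neq 1$, landing a nontrivial element of $N$ in the kernel, and a second commutator pushes it into $N\cap M_n$. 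Nothing in your proposal produces this step, and without it the appeal to the Dalla Volta--Lucchini criterion is unjustified. Two smaller points: minimality of $M_n$ itself also needs an argument (the paper's Proposition~\ref{proposition:a1a2a3a4} isolates a single coordinate; transitivity alone does not exclude proper invariant subproducts, and for $d=3$ the factor $M_n\cong C_3^{3^{n-1}}$ is abelian, so irreducibility must be checked by hand using the signed permutation action); and the recursion bottoms out at $E_1^2=S_d$ with $d(S_d)=2$, which is what actually supplies the value $2$ rather than a vague ``top chief factor of rank at most $2$.''
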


The outline of this paper is as follows. In Section~\ref{sec:2} we define the group $E_n^m$ and $F^{(m_1,m_2)}_n$, give a brief on the relation between the abelianization and wreath product, and explicitly compute the discriminate of $f^n-\alpha$. Most stuff in Section~\ref{sec:2} is already well-understood. In Section~\ref{sec:3} we give our main results. Finally, in Section~\ref{sec:3}, the author has some open questions and the difficulty to generalize the argument of this paper to more general cases.

We will use the following symbols in this paper.
\begin{itemize}
    \item $1$ is use to denote either the integer $1$, an identity map or an identity of a group. If we use $1$ to denote an identity, the group will be specified in the context.
    \item We will use $\ast$ to denote an arbitrary element. Where the element belongs to will be specified in the context.
    \item For $\sigma,\tau\in G$ we denote $\sigma^\tau$ for the conjugacy $\tau\sigma\tau^{-1}$.
    \item Let $\mathcal{C}_f$ be the set of all critical points of $f$, and $f(\mathcal{C}_f)=\{f(c)\mid c\in\mathcal{C}_f\}$.
    \item The splitting field of $f^n-\alpha$ is denoted by $K_f^n(\alpha)$, and the correspondent Galois group is denoted by $\Gal_f^n(\alpha)$.
    \item A point or an object $\alpha$ is preperiodic if $\{f^n(\alpha)\mid n=1,2,\ldots\}$ is a finite set. If we say it is periodic, then $f^n(\alpha)=\alpha$ for some integer $n$. The tail length of a preperiodic point is the smallest integer such that $f^n(\alpha)$ becomes a periodic point.
\end{itemize}
\section{Preliminary}\label{sec:2}
\subsection{Wreath product and arboreal representation}\label{sec:2.1}

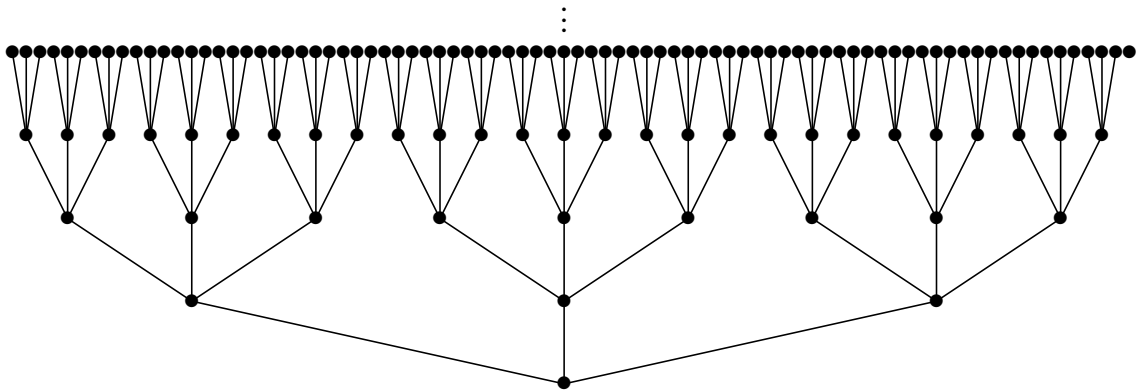
\begin{figure}[h]\label{3-ary tree}
\centering
\begin{tikzpicture}
[scale = 0.55]
	\node (top) at (8.5,9) {$\vdots$};
	\node (0) at (8.5,0) {$\color{black}\bullet$};
	\foreach \x in {40,41,...,121}
		\node (\x) at (1/3*\x -80/3+8.5,8) [inner sep=0.5pt] {$\color{black}\bullet$};
	\foreach \x in {13,...,39}
		\node (\x) at (\x - 26 +8.5,6) [inner sep=3pt] {$\color{black}\bullet$};
	\foreach \x in {4,5,...,12}
		\node (\x) at (3*\x-24 +8.5,4) [inner sep=3pt] {$\color{black}\bullet$};
	\foreach \x in {1,2,3}
		\node(\x) at (9*\x - 18+8.5,2) [inner sep=3pt]  {$\color{black}\bullet$};
	\foreach \x [evaluate=\x as \y using int(3*\x + 1),evaluate=\x as \z using int(3*\x + 2),evaluate=\x as \w using int(3*\x + 3)] in {0,1,...,39}{
		\draw[line width=.6pt] (\x.center) -- (\y.center);
		\draw[line width=.6pt] (\x.center) -- (\z.center);
		\draw[line width=.6pt] (\x.center) -- (\w.center);
	}
\end{tikzpicture}
\caption{A 3-ary tree of 4 levels}
\end{figure}

Let $T_n(d)$ be a regular $d$-ary tree of $n$ levels (see Figure~\ref{3-ary tree} as an example). We will omit $d$ in the notation for simplicity sake. $T_n$ has $d^n$ many leaves, vertices on the top of the tree, and has $1+d+\cdots +d^n$ many vertices on the tree. The \textbf{level of a vertex on a tree} is distance between the vertex and the root. For $m\leq n$ the \textbf{$m$-th level of the tree $T_d$} is the set of all vertices of level $m$.

Our result and many arguments will depend on an explicit labeling of the tree. We will make this labeling explicitly for the purpose of rigor, but we won't commend on it in the rest of this paper.
There are two labeling system of a tree. The \textbf{regular labeling} to denote a vertex of level $m$ is to use a tuple $(l_m,\ldots, l_1)$ in $\{1,2,\ldots,d\}^m$. Note that the vertex with the label $(l_m,\ldots, l_1)$ has distance $1$ to the vertex with the label $(l_{m-1},\ldots, l_1)$. This gives a recursive relation to label all vertices of a tree.

We are able to identify some canonical subtree using the regular labeling. For example, for each tuple $(l_{m},\ldots,l_{1})$ with $m\leq n$, the set of vertex $(\ast,\cdots,\ast,l_{m},\ldots,l_1)\in T_n$ is a subtree isomorphic to $T_{n-m}$ rooting at $(l_m,\ldots, l_1)$. We will call this subtree a level $n-m$ subtree, and denote by $T_{(l_m,\ldots, l_1)}$. 

The above system of labeling is not convenient sometime, especially when we only talk about the $m$-th level of the tree. For each vertex of level $m$ we label the point by the integer $l_m+(l_{m-1}-1)d+(l_{m-2}-1)d^2+\cdots +(l_1)d^{m-1}$. We will denote this labeling system by $I(T_n)=\{1,2,\cdots d^{m}\}$. The tree structure splits $I(T_n)$ to different blocks. A \textbf{block} of $T_n$ over $T_m$, denote by $\mathcal{B}(T_n/T_m)(l_{n-m},\ldots, l_1)$, are the set of indices that belongs to the same level $n-m$ subtree, and we use $\mathcal{B}(T_n/T_m)$ be the set of all such blocks. An index $i$ belongs to a level $n-m$ subtree $T_{(l_m,\ldots, l_1)}$ if $i\in\mathcal{B}(T_n/T_m)(l_m,\ldots, l_1)$. 

We will omit the strict definition of $\Aut(T_n)$ and how the group acting on $T_n$. We refer readers to \cite{B-F-H-J-Y-2017} for details. The labeling systems of the tree $T_n$ induce an isomorphism $[S_d]^n\subseteq S_{\# I(T_n)}$ where $[S_d]^n$ is the $n$-folds wreath product of $S_d$. On the other hand the embedding to the symmetric group $S_{d^n}$ of degree $d^n$ allows us to use all terminologies adhering to the theory of symmetric groups. For example an index $i\in I(T_n)$ is in the \textbf{support of $\sigma\in\Aut(T_n)$} if $\sigma(i)\neq i$. We also mimic the definition of $\sgn$ in \cite{B-F-H-J-Y-2017}, and use it to construct the group $E_n^m$ and $F_n^{(m_1,m_2)}$. We will give the details later.

Let $G\subseteq S_d$ be a group acting on $I=\{1,2,\ldots, d\}$. For an arbitrary group $H$ the wreath product of $H$ by $G$ over $I$, denoted by $H\wr_{I} G$, is an extension of groups. As a set the element of $H\wr_{I}G$ can be expressed as
\[
((h_1,\ldots, h_d);g)
\]
with $h_i\in H$ and $g\in G$. The operation of two element is defined as
\[
((h_i)_{i\in I}; g)((h_i')_{i\in I}; g')=((h_g(i))_{i\in I}(h_i');gg')
\]
Note that the following simple decomposition of an element of wreath product 
\[
((h_1,\ldots, h_d);g)=((1,\ldots, 1);g)
((h_1,\ldots, h_d);1)
\]
will be use frequently in our argument. When the index set $I$ is obvious in the context, we will omit $I$. A useful observation is that, for $n>m$, $\Aut(T_n)\cong \Aut(T_m)\wr_{I(T_{n-m})}\Aut(T_{n-m})$. If we say $\sigma\in\Aut(T_n)$ or $\sigma\in G\subseteq \Aut(T_m)\wr\Aut(T_{n-m})$, it means we will express $\sigma$ as
\[
((a_i)_{i\in I(T_{n-m})};b)
\]
with $a_i\in \Aut(T_m)$ and $b\in\Aut(T_{n-m})$. There is a natural homomorphism $\res:H\wr G\to G$ by projecting the second coordinate to $G$.

Let me recall that the sign of a permutation is $1$, if the permutation is even, or $-1$ otherwise. One can define a sign on the automorphism of a $d$-ary tree by labeling the branches on the $n$-th level of the tree. Once a label system is given, it gives an embedding from $\Aut(T_n)$ to $S_{d^n}$. Compose with the sign of permutations, it defines a sign on $\Aut(T_n)$. One may give a different label system on the branches, but two different sets of labels are conjugate by an element in $S_{d^n}$ which doesn't change the sign, so we have a unique way to define $\sgn:\Aut(T_n)\to C_2=\{\pm 1\}$. In addition there is a natural homomorphism, denoted by $\res_{m}$ from $\Aut(T_n)$ to $\Aut(\T_m)$ by restriction, and we define 
\[
\sgn_{m}=\sgn\circ \res_{m}.
\]

Here is a proposition about the sign on the automorphism of a $d$-ary tree.
\begin{proposition}\label{prop:sgn}
Let $T_n$ be a $d$-ary tree to $n$ levels. For odd $d$ let $\sigma=((b_i)_{i\in I};a)\in \Aut(T_n)$ where $a\in \Aut(T_m)$, $I=\{1,2,\ldots, d^m\}$ and $b\in \Aut(T_{n-m})$ for some $m<n$. Then we have
\[
\sgn(\sigma)=\sgn(a)\prod_{i\in I}\sgn(b_i).
\]
For an even $d$ we write $\sigma$ as $((b_i)_{i\in I}, a)\in \Aut(T_1)\wr\Aut(T_{n-1})$, and so
\[
\sgn(\sigma)=\prod_{i\in I}\sgn(b_i)
\]
\end{proposition}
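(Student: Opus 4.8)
The plan is to deduce both formulas from a single computation of the sign of a wreath-product element acting by its imprimitive action, after which the two cases differ only through the parity of the block size. Throughout I would use that, by the construction preceding the statement, $\sgn$ is the composite of the leaf-embedding $\Aut(T_n)\hookrightarrow S_{d^n}$ with the usual sign on $S_{d^n}$, and that this is insensitive to the chosen labeling of the top level because any two labelings are conjugate in $S_{d^n}$ and the sign is a class function.

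First I would isolate the key lemma. Suppose $\sigma=((h_i)_{i\in I};g)$ lies in $H\wr_I G$, where $G$ acts on $I$ with $|I|=k$ and $H$ acts on a set $J$ with $|J|=l$. Viewing $\sigma$ as a permutation of $I\times J$ (block $i$ being $\{i\}\times J$), I claim
\[
\sgn(\sigma)=\sgn(g)^{\,l}\prod_{i\in I}\sgn(h_i).
\]
To prove it I would use the decomposition singled out in the preliminaries, $((h_i)_i;g)=((1)_i;g)\,((h_i)_i;1)$. The factor $((h_i)_i;1)$ preserves each block and acts on it by $h_i$; these $k$ permutations have pairwise disjoint supports, so its sign is $\prod_i\sgn(h_i)$. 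The factor $((1)_i;g)$ moves the blocks rigidly, and $g\mapsto((1)_i;g)$ is a homomorphism $G\to S_{I\times J}$, so $g\mapsto\sgn\big(((1)_i;g)\big)$ is a homomorphism $G\to\{\pm1\}$; since interchanging two blocks of size $l$ is a product of $l$ transpositions and so has sign $(-1)^l$, this homomorphism sends $g$ to $\sgn(g)^l$. Multiplying the two signs yields the displayed identity.

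Finally I would specialize to the tree. For odd $d$ I apply the lemma to $\Aut(T_n)\cong\Aut(T_{n-m})\wr_{I(T_m)}\Aut(T_m)$, so that $g=a$ permutes the $k=d^m$ level-$(n-m)$ subtrees, each of which contributes $l=d^{n-m}$ leaves, and $h_i=b_i$; since $d$ is odd the exponent $l=d^{n-m}$ is odd, whence $\sgn(a)^{d^{n-m}}=\sgn(a)$ and the formula reads $\sgn(\sigma)=\sgn(a)\prod_i\sgn(b_i)$. For even $d$ I take $m=1$, i.e. $\Aut(T_n)\cong\Aut(T_1)\wr_{I(T_{n-1})}\Aut(T_{n-1})$, so now each block has size $l=d$ and $b_i\in S_d$; since $d$ is even, $\sgn(a)^{d}=1$ and the formula collapses to $\sgn(\sigma)=\prod_i\sgn(b_i)$.

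The step I expect to be the main obstacle is not either parity count but the bookkeeping in the lemma: showing cleanly that the block-permutation factor has sign exactly $\sgn(g)^l$, and verifying that the imprimitive action attached to the wreath decomposition genuinely agrees, up to a relabeling of the leaves by the blocks $\mathcal{B}(T_n/T_m)$, with the leaf action defining $\sgn$ — so that the labeling-independence established earlier may legitimately be invoked. Once that compatibility is in place, both identities fall out immediately from the parity of $d^{n-m}$ in the odd case and of $d$ in the even case.
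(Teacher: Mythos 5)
Your proposal is correct and follows essentially the same route as the paper: both factor $\sigma$ as $((1)_{i};a)\,((b_i)_{i};1)$, observe that the block-preserving factor has sign $\prod_{i}\sgn(b_i)$ because the $b_i$ have disjoint supports, and show that the block-permuting factor has sign $\sgn(a)^{d^{n-m}}$, which then collapses according to the parity of $d$. The only cosmetic difference is that you compute the sign of the block permutation via the homomorphism-evaluated-on-transpositions argument, whereas the paper lifts the disjoint cycles of $a$ to $d^{n-m}$ disjoint cycles on the leaves; both yield the exponent $d^{n-m}$ (which the paper's displayed formula misprints as $d^{m-n}$).
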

\begin{proof}
First, note that $\sgn(((b_i)_{i\in I},a))=\sgn(((1)_{i\in I}, a)))\sgn(((b_i)_{i\in I},1)))$. Since $b_i$ acts disjointedly on the tree $T_n$, it is clear $\sgn(((b_i)_{i\in I},1))=\prod_{i\in I}\sgn(b_i)$. Now consider $\sgn(((1)_{i\in I}, a))$. Let me express $a$ in terms of product of disjoint cycle $a_1\cdots a_k$ for some integer $k$. Each cycle $a_i$ creates $d^{n-m}$ many disjoint cycles on the $n$-th level of the tree $T_n$, the cycles associating with $a_i$ are disjointing from the cycles associating with $a_j$ for distinct $i$ and $j$. Therefore the sign of $((1)_{i\in I}, a)$ is
\[
\prod_{i=1}^k\sgn(a_i)^{d^{m-n}}=
\begin{cases}
1 \text{, if }d\text{ is even;}\\
\prod_{i=1}^k\sgn(a_i)\text{, o.w..}
\end{cases}
\]
Note that for even $d$ we can set $m=n-1$, and it implies $\sgn(\sigma)=\prod_{i\in I}\sgn(b_i)$.
\end{proof}

Note that if we consider $\Aut(T_n)\cong\Aut(T_{n-m})\wr \Aut(T_{m})$ with $0<m<n$, it is natural to define $\sgn^m:\Aut(T_n)\to \{\pm 1\}$ as
\[
\sgn^m((b_i)_{i\in I}, a_i)=\prod_{i\in I}\sgn(b_i)
\]
with $b_i\in \Aut(T_{n-m})$ and $a\in\Aut(T_{m})$. Obviously this is a group homomorphism.

Let $n>m_1>m_2$. we can also define an alternative sign function as
\begin{equation}\label{eq:sgn2}
\sgn^{(m_1,m_2)}=\sgn^{m_2}\circ \res_{m_1}
\end{equation}
\begin{proposition}\label{proposition:sgn2}
We have
\[
\sgn^{(m_1,m_2)}=\sgn_{m_1}\sgn_{m_2}.
\]
\end{proposition}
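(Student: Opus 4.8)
The plan is to first distill from Proposition~\ref{prop:sgn} the single identity that drives everything: for odd $d$ and any $0<m<N$, the three homomorphisms on $\Aut(T_N)$ satisfy
\[
\sgn=\sgn_m\cdot\sgn^m.
\]
Indeed, writing $\tau=((b_i)_{i\in I};a)$ with $I=I(T_m)$, $a\in\Aut(T_m)$ and $b_i\in\Aut(T_{N-m})$, the restriction $\res_m(\tau)$ is exactly $a$, so $\sgn_m(\tau)=\sgn(a)$, while $\sgn^m(\tau)=\prod_{i\in I}\sgn(b_i)$ by definition; Proposition~\ref{prop:sgn} then reads $\sgn(\tau)=\sgn(a)\prod_{i\in I}\sgn(b_i)=\sgn_m(\tau)\sgn^m(\tau)$. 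Since all values lie in $\{\pm1\}$, this is equivalent to $\sgn^m=\sgn\cdot\sgn_m$, which is the form I will use.

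Next I would apply this identity on the smaller tree $T_{m_1}$ with cut level $m_2$ (legitimate since $0<m_2<m_1$): for every $\rho\in\Aut(T_{m_1})$,
\[
\sgn^{m_2}(\rho)=\sgn(\rho)\,\sgn_{m_2}(\rho),
\]
where $\sgn$ and $\sgn_{m_2}$ are here the sign homomorphisms of $\Aut(T_{m_1})$. Precomposing with $\res_{m_1}:\Aut(T_n)\to\Aut(T_{m_1})$ and recalling $\sgn^{(m_1,m_2)}=\sgn^{m_2}\circ\res_{m_1}$ gives, for $\sigma\in\Aut(T_n)$,
\[
\sgn^{(m_1,m_2)}(\sigma)=\sgn\bigl(\res_{m_1}(\sigma)\bigr)\,\sgn_{m_2}\bigl(\res_{m_1}(\sigma)\bigr).
\]

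Finally I would simplify the two factors using functoriality of restriction. The first factor is $\sgn(\res_{m_1}(\sigma))=\sgn_{m_1}(\sigma)$ directly from the definition $\sgn_{m_1}=\sgn\circ\res_{m_1}$. For the second factor, restricting $\sigma$ first to $m_1$ levels and then to $m_2$ levels is the same as restricting it to $m_2$ levels, i.e. $\res_{m_2}\circ\res_{m_1}=\res_{m_2}$ as maps into $\Aut(T_{m_2})$; hence $\sgn_{m_2}(\res_{m_1}(\sigma))=\sgn(\res_{m_2}(\sigma))=\sgn_{m_2}(\sigma)$. Combining the two yields $\sgn^{(m_1,m_2)}=\sgn_{m_1}\sgn_{m_2}$.

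The only genuine subtlety—and the step I would write most carefully—is the bookkeeping of ambient trees: the symbol $\sgn_{m_2}$ denotes one homomorphism on $\Aut(T_{m_1})$ in the middle step and another on $\Aut(T_n)$ in the conclusion, and the identification of the two rests on the transitivity $\res_{m_2}\circ\res_{m_1}=\res_{m_2}$. I would also flag that oddness of $d$ is essential: it is precisely the odd case of Proposition~\ref{prop:sgn} that retains the factor $\sgn(a)$ and hence makes $\sgn=\sgn_m\sgn^m$ hold, whereas for even $d$ that factor disappears and the stated identity need not survive.
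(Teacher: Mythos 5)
Your proof is correct and follows essentially the same route as the paper's: both extract the identity $\sgn^{m}=\sgn\cdot\sgn_{m}$ from Proposition~\ref{prop:sgn}, apply it at level $m_2$, precompose with $\res_{m_1}$, and finish with $\sgn\circ\res_{m_1}=\sgn_{m_1}$ and $\res_{m_2}\circ\res_{m_1}=\res_{m_2}$. Your explicit remarks on tracking the ambient tree and on the necessity of odd $d$ are welcome clarifications that the paper leaves implicit.
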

\begin{proof}
It follows from direct computation. A $\sigma\in\Aut(T_n)$ with $n\geq m$ can be expressed as $((b_i),a)\in\Aut(T_{n-m})\wr\Aut(T_m)$, and note that $\sgn_{m}(\sigma)=\sgn(a)$. Thus we have
\[
\sgn(\sigma)\sgn_{m}(\sigma)=\sgn(a)\prod_{i\in I}\sgn(b_i)\sgn_{m}(\sigma)=\prod_{i\in I}\sgn(b_i)=\sgn^m(\sigma).
\]
Thus by Equation~\ref{eq:sgn2} we have
\begin{align*}
    \sgn^{(m_1,m_2)}&=\sgn^{m_2}\circ\res_{m_1}\\
    &=(\sgn\cdot\sgn_{m_2})\circ\res_{m_1}\\
    &=(\sgn\circ\res_{m_1})\cdot(\sgn_{m_2}\circ\res_{m_1})\\
    &=\sgn_{m_1}\cdot(\sgn\circ\res_{m_2}\circ\res_{m_1})=\sgn_{m_1}\cdot\sgn_{m_2}.
\end{align*}
\end{proof}
Now we define the following two subgroups of $\Aut(T_n)$
\[
E_n^m=
\begin{cases}
\Aut(T_1)\text{, if }n=1;\\
E_{n-1}^m\wr \Aut(T_1)\text{, if }n<m;\\
E_{n-1}^m\wr \Aut(T_1)\cap \ker(\sgn_m)\text{, if }n\geq m.
\end{cases},
\]
and
\[
F_n^{(m_1,m_2)}=
\begin{cases}
\Aut(T_1)\text{, if }n=1;\\
F_{n-1}^{(m_1,m_2)}\wr \Aut(T_1)\text{, if }n< m_1;\\
F_{n-1}^{(m_1,m_2)}\wr \Aut(T_1)\cap \ker(\sgn^{(m_1,m_2)})\text{, if }n\geq m_1.
\end{cases}.
\]
\begin{example}
Note that $E_n^m$ is not isomorphic to $F_n^m$. For example for $d=3$ the element $(((12),(12),1),(12))$ is in $F_2^{(2,1)}$ but not in $E_2^2$. 
\end{example}
Let us compute the order of these groups.
\begin{proposition}\label{prop:unboundedindex}
Let $E_n^m$ and $F_n^m$ be subgroups of the automorphism of a $d$-ary tree with an odd $d$. Then
\[
|E_n^m|=|F_n^m|=
\dfrac{(d!)^{\frac{d^n-1}{d-1}}}{2^{\frac{C(d,n)}{d-1}}}
\]
with
\[
C(d,n)=\begin{cases}
0\text{, if }n<m\\
\dfrac{d^{n-m+1}-1}{d-1}\text{, o.w.}
\end{cases}.
\]
\end{proposition}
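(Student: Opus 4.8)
The plan is to turn the recursive definitions of $E_n^m$ and $F_n^{(m_1,m_2)}$ into recurrences for their orders and then solve them. The two structural facts I would use at the outset are the wreath decomposition $\Aut(T_n)\cong\Aut(T_{n-1})\wr_{I(T_1)}\Aut(T_1)$, which yields $|\Aut(T_n)|=|\Aut(T_{n-1})|^{d}\,d!$ and hence $|\Aut(T_n)|=(d!)^{(d^n-1)/(d-1)}$, and the observation that for $n<m$ (resp.\ $n<m_1$) the recursive definition is a pure iterated wreath product with no kernel intersection. A one-line induction then gives $E_n^m=\Aut(T_n)$ (resp.\ $F_n^{(m_1,m_2)}=\Aut(T_n)$) in this range, which is exactly the case $C(d,n)=0$.

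For $n\ge m$ the order drops by the index of $\ker(\sgn_m)$ inside $E_{n-1}^m\wr\Aut(T_1)$, so the whole computation rests on the claim that this index is exactly $2$, i.e.\ that $\sgn_m$ is surjective on $E_{n-1}^m\wr\Aut(T_1)$. Writing an element as $((c_j)_j;g)$ with $c_j\in E_{n-1}^m$ and $g\in\Aut(T_1)$, Proposition~\ref{prop:sgn} gives, for odd $d$, the formula $\sgn_m(((c_j);g))=\sgn(g)\prod_j\sgn_{m-1}(c_j)$. Taking $g$ a transposition and all $c_j=1$ produces an element with $\sgn_m=-1$, which settles surjectivity for $E_n^m$. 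Consequently $|E_n^m|=\tfrac12|E_{n-1}^m|^{d}\,d!$ for $n\ge m$.

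The analogous statement for $F_n^{(m_1,m_2)}$ is the crux, and it is genuinely harder because the naive top transposition fails. Using $\sgn^{(m_1,m_2)}=\sgn_{m_1}\sgn_{m_2}$ (Proposition~\ref{proposition:sgn2}) together with the level-split formula above, a short computation gives the reduction identity $\sgn^{(m_1,m_2)}(((c_j);g))=\prod_j\sgn^{(m_1-1,m_2-1)}(c_j)$, in which the top factor $\sgn(g)$ cancels against itself. Surjectivity of $\sgn^{(m_1,m_2)}$ therefore amounts to producing a single $c\in F_{n-1}^{(m_1,m_2)}$ with $\sgn^{(m_1-1,m_2-1)}(c)=-1$. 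I would take $c$ to be the transposition of two sibling subtrees rooted at level $m_2$; by Proposition~\ref{prop:sgn} (and here the oddness of $d$ is essential) one has $\sgn_k(c)=-1$ exactly for $k\ge m_2$, so $\sgn^{(m_1,m_2)}(c)=(-1)(-1)=+1$ while $\sgn^{(m_1-1,m_2-1)}(c)=(-1)(+1)=-1$. The delicate point—and the part I expect to cost the most work—is verifying that this $c$ actually lies in $F_{n-1}^{(m_1,m_2)}$, i.e.\ that it survives all the kernel conditions built into the recursive definition; this I would handle by an induction on the level, checking the sign bookkeeping at each stage. Granting it, $|F_n^{(m_1,m_2)}|=\tfrac12|F_{n-1}^{(m_1,m_2)}|^{d}\,d!$ for $n\ge m_1$.

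Finally I would solve the two recurrences, which are now identical in form. Writing the order as $(d!)^{p_n}2^{-e_n}$, the step $|\,\cdot\,|=\tfrac12|\,\cdot\,|^{d}\,d!$ shows that the $(d!)$-exponent satisfies $p_n=d\,p_{n-1}+1$ unconditionally, giving $p_n=(d^n-1)/(d-1)$ and matching the numerator, while the $2$-exponent satisfies $e_n=d\,e_{n-1}$ for $n<m$ and $e_n=d\,e_{n-1}+1$ for $n\ge m$, so with $e_{m-1}=0$ it telescopes to $e_n=(d^{n-m+1}-1)/(d-1)$, the geometric sum recorded by $C(d,n)$ in the statement. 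Since $E_n^m$ and $F_n^{(m_1,m_2)}$ obey the same recursion—each halving exactly once at every level past its threshold ($m$ for $E$, $m_1$ for $F$)—their orders coincide, giving $|E_n^m|=|F_n^{(m_1,m_2)}|$ and the stated closed form.
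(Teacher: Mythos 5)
Your proposal is correct, and for the group $E_n^m$ it coincides with the paper's argument: both reduce the order computation to the surjectivity of $\sgn_m$ on $E_{n-1}^m\wr\Aut(T_1)$, both witness surjectivity with the element $((1,\ldots,1);\sigma)$ for $\sigma$ an odd permutation (using oddness of $d$ via Proposition~\ref{prop:sgn}), and both then solve the recursion $|G_n|=\tfrac12|G_{n-1}|^d\,d!$. Where you genuinely diverge is the $F_n^{(m_1,m_2)}$ case. The paper proves surjectivity of $\sgn^{(m_1,m_2)}$ indirectly, by showing that the full tuple $(\sgn_{m_1},\ldots,\sgn_1)$ remains surjective onto $C_2^{m_1}$ on $F_{n}^{(m_1,m_2)}\wr\Aut(T_1)$ (starting from the fact that $F_{m_1-1}^{(m_1,m_2)}$ is the full wreath product, then lifting a witness for $\sgn_m$ to one for $\sgn_{m+1}$ at each stage), and composing with the projection $(c_{m_1},\ldots,c_1)\mapsto c_{m_1}c_{m_2}$. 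You instead derive the clean level-shift identity $\sgn^{(m_1,m_2)}(((c_j);g))=\prod_j\sgn^{(m_1-1,m_2-1)}(c_j)$, which isolates exactly what is needed, and then produce a single explicit witness (a transposition of two sibling subtrees rooted at level $m_2$, which for odd $d$ has $\sgn_k=-1$ precisely for $k\ge m_2$, hence lies in $\ker(\sgn^{(m_1,m_2)})$ while having $\sgn^{(m_1-1,m_2-1)}=-1$). The membership check you flag as delicate does go through by the same sign computation applied recursively to the components, so your route is complete in substance and arguably more transparent than the paper's, at the cost of having to verify that the witness survives the kernel conditions at every level; the paper's route avoids naming an element but has to carry the full vector of sign characters through the induction. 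One small remark: your final exponent of $2$, namely $\frac{d^{n-m+1}-1}{d-1}$, agrees with the paper's own inductive computation inside its proof, so the extra division by $d-1$ appearing in the displayed statement of the proposition is a typo in the statement rather than a discrepancy in your argument.
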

\begin{proof}
For $n\geq m$ we consider the following two compositions
\[
\begin{tikzcd}
\phi_1: E_{n-1}^m\wr \Aut(T_1)\arrow[r,hook]& \Aut(T_n) \arrow[r,"\sgn_m"]&\{\pm 1\}
\end{tikzcd}
\]
and
\[
\begin{tikzcd}
\phi_2: F_{n-1}^{(m,m')}\wr \Aut(T_1)\arrow[r,hook]& \Aut(T_n) \arrow[r,"\sgn^{(m,m')}"]&\{\pm 1\}
\end{tikzcd}.
\]
We claim that $\phi_1$ and $\phi_2$ are onto.

Take $g=((1)_{i\in I},\sigma)\in E_{n-1}\wr \Aut(T_1)$ for odd permutation $\sigma$, and one computes $\sgn_m(g)=sgn(\sigma)=-1$ by assuming $d$ is odd. 

For $\phi_2$ let us first prove the following assertion: for any $m\geq 1$ if $\sgn_{m}:F_{n-1}^{(m_1,m_2)}\to C_2$ is onto, then so is $\sgn_{m+1}:F_{n-1}^{(m_1,m_2)}\wr\Aut(T_1)\to C_2$. Since $\sgn_m$ is onto, we may choose an element $a$ such that $\sgn_m(a)=-1$. Then $b=((a_i)_{i\in I(T_1)};1)$ with $a_1=a$ and $a_i=1$ for $i\geq 2$ has $\sgn_{m+1}(b)=-1$. Now we define a homomorphism, for $n\geq m_1-1$,
\begin{align*}
\phi:F_{n-1}^{(m_1,m_2)}\wr\Aut(T_1)&\to C_2^{m_1}\\
a&\mapsto(\sgn_{m_1}(a),\sgn_{m_2}(a),\ldots,\sgn_{1}(a)).
\end{align*}
It is clear that $\phi$ is onto as $n=m_1$ because the domain is a wreath product of $S_d$, so $F_{m_1}^{(m_1,m_2)}$ is the kernel of $\phi_2=\psi\circ\phi$ where
\begin{align*}
\psi: C_2^{m_1}&\to C_2\\
(c_{m_1},\ldots,c_{1})&\mapsto c_{m_1}c_{m_2} 
\end{align*}
by Proposition~\ref{proposition:sgn2}. Note that $\sgn_{m}(F_{m_1}^{(m_1,m_2)})$ is onto $C_2$ for all $m$, so $\phi$ is onto for $m_1+1$. Thus it shows that $\phi_2:F_{m_1+1}^{(m_1,m_2)}\to C_2$ is onto. Thus by induction we may show that $\phi_2$ is onto for any $n\geq m_1$.

Finally we calculate the orders of these groups. It is obviously equal to $(d!)^{\frac{d^n-1}{d-1}}$ for all $n<m$ since $\Aut(T_n)\cong [S_d]^n$. Now we can prove the formula by induction. Suppose the formula is true for all $n$ with $n\geq m-1$, then
\[
|E_n^m|=\dfrac{1}{2}|E_n^m\wr\Aut(T_1)|=\dfrac{1}{2}\left(\dfrac{(d!)^{\frac{d^n-1}{d-1}}}{2^{\frac{d^{n-m+1}-1}{d-1}}}\right)^dd!=\left( \dfrac{(d!)^{\frac{d^{n+1}-1}{d-1}}}{2^{\frac{d^{n-m+2}-1}{d-1}}}\right).
\]
Similarly the same formula holds for $F_n^{m_1,m_2}$ by the same reasoning.
\end{proof}
\begin{lemma}\label{lemma:transitive}
For $d>2$ the $E_n^m$ acts transitively on the correspondent index set. Moreover for two distinct index there exists $\sigma\in E_n^m$ permuting $i$ and $j$.
\end{lemma}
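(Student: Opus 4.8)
The plan is to induct on $n$, using the wreath decomposition $E_n^m = E_{n-1}^m \wr \Aut(T_1)$ (intersected with $\ker(\sgn_m)$ once $n\geq m$). I would write a leaf of $T_n$ as a pair $(c,x)$, where $c\in\{1,\dots,d\}$ names one of the $d$ principal subtrees hanging below the root and $x$ is a leaf of that subtree $\cong T_{n-1}$, so that each $\sigma=((a_1,\dots,a_d);b)\in E_n^m$ acts by $b\in S_d$ permuting the principal subtrees and $a_c\in E_{n-1}^m$ acting inside subtree $c$. The base case $n=1$ is immediate: $E_1^m=\Aut(T_1)=S_d$ is $2$-transitive, so it acts transitively and contains the transposition interchanging any two leaves.

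For the inductive step I would separate two cases. If $i=(c,x)$ and $j=(c,y)$ lie in the same principal subtree, take $\tau\in E_{n-1}^m$ interchanging $x$ and $y$ (inductive hypothesis) and set $a_c=\tau$, $b=1$, all other components trivial; this interchanges $i$ and $j$. If $i=(c,x)$ and $j=(c',y)$ lie in different subtrees, take $b=(c\,c')\in S_d$ together with $a_c\in E_{n-1}^m$ sending $x\mapsto y$ and $a_{c'}$ sending $y\mapsto x$ (inductive transitivity), which again interchanges $i$ and $j$. When $n<m$ there is no sign condition, so these elements already lie in $E_n^m$ and deliver both transitivity and the swap at once.

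The crux is the sign condition when $n\geq m$: the elements just built need not satisfy $\sgn_m=1$. Restricting to the top $m$ levels and applying Proposition~\ref{prop:sgn} gives
\[
\sgn_m((a_1,\dots,a_d);b)=\sgn(b)\prod_{i=1}^{d}\sgn_{m-1}(a_i),
\]
so the naive choice may have $\sgn_m=-1$. I would repair this by inserting a correcting factor $\rho$ into a principal subtree containing neither $i$ nor $j$ — such a spare subtree exists precisely because $d>2$ — which leaves the action on $i,j$ untouched while multiplying $\sgn_m$ by $\sgn_{m-1}(\rho)$. To select $\rho$ with the needed parity I need the auxiliary fact that $\sgn_{m-1}\colon E_{n-1}^m\to\{\pm1\}$ is surjective; I would prove this directly by exhibiting an element supported on the top $m$ levels and lifted from some $\bar g\in\Aut(T_m)$ that is even on level $m$ but odd on level $m-1$. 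Such a $\bar g$ exists for $d\geq 2$, and because it is top-supported it automatically satisfies all the deeper defining constraints of $E_{n-1}^m$, so its membership reduces to the single condition $\bar g\in E_m^m=\ker(\sgn)$.

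The main obstacle is exactly this parity bookkeeping: arranging the level-$m$ signs to cancel while still realizing the prescribed motion of $i$ and $j$. The hypothesis $d>2$ enters essentially here, since it is what guarantees a spare subtree to absorb the correction (two spares in the cross-subtree case, hence $d>2$ rather than merely $d>1$). I would finally treat the boundary value $m=1$ separately: there $\sgn_{m-1}$ is vacuous and the bottom layer of $E_n^1$ is a full $S_d$, so transitivity and swaps of sibling leaves are immediate; the only delicate point is the cross-subtree swap, where the block permutation itself must be made even, which is where the parity of $b$ — and not merely a spare subtree — has to be handled with care.
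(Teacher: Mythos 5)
Your inductive skeleton (same-subtree versus cross-subtree swaps, with a spare subtree absorbing a parity correction) is sound and close in spirit to the paper's, but the auxiliary fact you lean on is not established, and this is a genuine gap. You need $\sgn_{m-1}\colon E_{n-1}^m\to\{\pm1\}$ to be surjective, and you propose to witness this by the rigid lift of some $\bar g\in\Aut(T_m)$ with $\sgn(\bar g)=1$ and $\sgn_{m-1}(\bar g)=-1$, claiming that membership of the lift in $E_{n-1}^m$ reduces to the single condition $\sgn(\bar g)=1$. That reduction is false once $n-1\geq m+1$: unwinding the recursive definition, $((a_i)_i;b)\in E_{n-1}^m$ requires not only $\sgn_m=1$ at the root but also that each component $a_i$ lie in $E_{n-2}^m$ and hence satisfy its own $\sgn_m$-condition, and so on down to depth $n-1-m$. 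For a rigid lift the condition at a depth-one vertex reads $\sgn(\bar g_i)^{d}=\sgn(\bar g_i)=1$ for odd $d$, where $\bar g_i\in\Aut(T_{m-1})$ is the corresponding component of $\bar g$. For $m=2$ this forces every $\bar g_i$ to be even, and then $\sgn(\bar g)=\sgn(b)\prod_i\sgn(\bar g_i)=1$ forces $\sgn_1(\bar g)=\sgn(b)=+1$: no top-supported element of $E_{n-1}^2$ with $\sgn_1=-1$ exists once $n-1\geq 3$. The surjectivity itself is true --- in $E_3^2$, for instance, take $((A,1,\dots,1);h)$ with $h$ odd and $A\in E_2^2$ satisfying $\sgn_1(A)=-1$ --- but it must be proved by an induction producing such non-rigid witnesses. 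Until that is repaired, your cross-subtree step has no guaranteed correcting factor $\rho$ and the argument does not close.

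Two further remarks. First, your deferred boundary case $m=1$ rests on the false premise that ``the bottom layer of $E_n^1$ is a full $S_d$'': the condition $\sgn_1=1$ is precisely that the root-level block permutation be \emph{even}, so for $d=3$ no element of $E_n^1$ can interchange two leaves lying in distinct principal subtrees, and the ``moreover'' clause genuinely fails there; this is a defect of the statement at $m=1$, $d=3$ rather than of your proof alone, but it cannot be waved away as easy. Second, the paper takes a different route to the same construction: it first observes $E_n^1\subseteq E_n^m$ and proves the swap inside $E_n^1$, where the only parity constraint at each stage is on the root-level permutation $b$, which it makes even by multiplying the transposition $\tau$ of the two relevant subtrees by a disjoint transposition $\tau'$. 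That reduction localizes the bookkeeping to a single level (at the cost of breaking down for $d=3$, where $S_3$ has no two disjoint transpositions), whereas your direct approach in $E_n^m$ avoids the $m=1$ pathology for $m\geq 2$ but pays for it with the surjectivity lemma you have not yet proved.
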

\begin{proof}
If we prove the second statement, then the first statement is a consequence. We process the proof of the second statement by induction. Moreover we notice that $E_n^1\subseteq E_n^m$ for any $m$, so we only need to show the second statement is true for $E_n^1$ for any $n>0$.

It is trivial for $E_1^1$ because it is isomorphic to $A_d$. For induction hypothesis we assume the statement is true for some $n$. For $i$ and $j$ on the same level $k$ subtree with $k<n$, the statement is true by induction. If $i$ and $j$ does not contain in the same level $k$ subtree with $k<n$, then we may choose a transpose $\tau$ that permutes the subtree of $i$ and the subtree of $j$ and choose a permutation $\sigma_i$ on the subtree of $i$ that taking $\tau(j)$ to $i$ and choose a permutation $\sigma_j$ on the subtree of $j$ that taking $\tau(i)$ to $j$. Then the permutation 
\[
    \omega=((1,\ldots, 1,\sigma_i,1\ldots,1,\sigma_j,1,\ldots,1),\tau\tau')
\]
where $\sigma_i$ and $\sigma_j$ are at the coordinates correspondent to the subtree of $i$ and $j$ respectively, and $\tau'$ is a transpose disjoint from $\tau$.
\end{proof}
\subsection{Abelianization}
The context in this section is already well-known. We give the statement and proof for the sake of self-contained. The primary goal of this section is to give the relation between the abelianization and the wreath product. It is an elementary exercise in the course of abstract algebra to show that if $G/N$ is abelian for a normal subgroup $N$ of a group $G$, then $N$ contains the commutator subgroup of $G$. Thus, one can define
\begin{definition}
The \textbf{abelianization} of a group $G$, denoted as $G^{ab}$, is the largest abelian quotient group of $G$. More precisely, let $H_1$ and $H_2$ be subgroups of $G$. Let me define $[H_1,H_2]$ to be the subgroup generated by the set $\{h_1h_2h_1^{-1}h_2^{-1}\mid  h_1\in H_1\ \text{and}\ h_2\in H_2\}$. The commutator subgroup of $G$ is $[G,G]$, and $G^{ab}$ is the quotient group $G/[G,G]$.
\end{definition}

Let $K$ be Galois over $L$. A quotient group of a Galois group $\Gal(K/L)$ is correspondent to a normal subextension of $L$ in $K$. Therefore, the abelianization of the Galois group is correspondent to the largest abelian and normal subextension of $L$ in $K$.

The following theorem deduce the relation between wreath product and abelianization. \begin{lemma}
Let $G$ be the semidirect product of $H$ acting on $N$. Then, $G^{ab}=(H\ltimes N)^{ab}=(H^{ab})\times (N^{ab})_H$ where $(N^{ab})_H$ is the quotient group $N^{ab}/[H,N]$.
\end{lemma}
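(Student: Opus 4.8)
The plan is to exhibit the isomorphism directly, by writing down a natural surjection $\phi\colon G\to H^{ab}\times(N^{ab})_H$ and then proving that its kernel is exactly the commutator subgroup $[G,G]$. Since $N$ is normal in $G=H\ltimes N$, every element can be written uniquely as $hn$ with $h\in H$ and $n\in N$, and I would define $\phi(hn)=(\bar h,\bar n)$, where $\bar h$ denotes the image of $h$ in $H^{ab}=H/[H,H]$ and $\bar n$ the image of $n$ in $(N^{ab})_H=N/\bigl([N,N][H,N]\bigr)$. Here $[H,N]\subseteq N$ because $N$ is normal, so the quotient $(N^{ab})_H$ is well defined, and it is precisely the group of $H$-coinvariants of $N^{ab}$. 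The target is abelian, being a product of two abelian groups, and $\phi$ is visibly surjective, since $h\mapsto(\bar h,1)$ and $n\mapsto(1,\bar n)$ together hit all generators.

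First I would check that $\phi$ is a homomorphism; this is the one step that uses the structure of the problem rather than formal nonsense. Computing in $G$ gives
\[
(h_1 n_1)(h_2 n_2)=h_1h_2\,(h_2^{-1}n_1h_2)\,n_2,
\]
and setting $m=h_2^{-1}n_1h_2\in N$ we have $m n_1^{-1}=[h_2^{-1},n_1]\in[H,N]$. Hence $\bar m=\bar n_1$ in $(N^{ab})_H$, so that
\[
\phi\bigl((h_1n_1)(h_2n_2)\bigr)=(\overline{h_1h_2},\,\overline{m n_2})=(\bar h_1\bar h_2,\,\bar n_1\bar n_2)=\phi(h_1n_1)\,\phi(h_2n_2).
\]
The crucial point, which I expect to be the real content of the argument, is that passing to the $H$-coinvariants $(N^{ab})_H$ is exactly what kills the twist $h_2^{-1}n_1h_2$ introduced by the action; this twist is the only obstruction to multiplicativity, so the coinvariants quotient is forced on us rather than chosen ad hoc.

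It remains to identify $\ker\phi$ with $[G,G]$. One inclusion is automatic: the codomain of $\phi$ is abelian, so $[G,G]\subseteq\ker\phi$. For the reverse inclusion, an element $hn$ lies in $\ker\phi$ iff $h\in[H,H]$ and $n\in[N,N][H,N]$; since each of $[H,H]$, $[N,N]$ and $[H,N]$ consists of commutators of elements of $G$ and therefore lies in $[G,G]$, any such product $hn$ lies in $[G,G]$. Thus $\ker\phi=[G,G]$, and $\phi$ descends to the claimed isomorphism $G^{ab}\cong H^{ab}\times(N^{ab})_H$. The main obstacle is essentially bookkeeping: confirming that membership in $\ker\phi$ really does decompose as $h\in[H,H]$ together with $n\in[N,N][H,N]$, and that no further relations are needed — but this comes out of the kernel computation above, so I would not need an independent structural description of $[G,G]$.
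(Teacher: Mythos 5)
Your proof is correct. It differs in organization from the paper's argument: the paper starts from the structural identity $[G,G]=\langle [N,N],[N,H],[H,H]\rangle$ and passes to the quotient in stages ($G \to H\ltimes N^{ab} \to H\times (N^{ab})_H \to H^{ab}\times (N^{ab})_H$), whereas you build the explicit surjection $\phi(hn)=(\bar h,\bar n)$ and compute its kernel. The two are two sides of the same computation, but your route has a concrete advantage: the multiplicativity check, where the twist $h_2^{-1}n_1h_2$ is absorbed precisely because one has quotiented by $[H,N]$, together with the kernel identification, actually \emph{proves} the generation statement that the paper merely asserts (and in particular dispenses with the need to verify separately that $[N,N][H,N]$ is a normal subgroup of $G$ before forming the staged quotients — though you do implicitly use that $[N,N][H,N]$ is a subgroup of $N$ when you say $(N^{ab})_H=N/\bigl([N,N][H,N]\bigr)$ is well defined; this follows from the standard fact that $[H,N]$ is normalized by both $H$ and $N$, which is worth a sentence). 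The paper's version is shorter but leaves more to the reader; yours is self-contained and makes visible exactly why the $H$-coinvariants are forced. One small point of rigor you rely on correctly but should state: the decomposition $g=hn$ is unique in a semidirect product, which is what lets you read off membership in $\ker\phi$ coordinatewise.
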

\begin{proof}
Since $[G,G]=\la [N,N],[N,H],[H,H]\ra$, we have 
\begin{align*}
    G^{ab}&=\dfrac{G}{[G,G]}=\dfrac{G}{\la [N,N],[N,H],[H,H]\ra}\\
    &\cong \dfrac{H\ltimes (N^{ab})}{\la [H,N],[H,H]\ra}\cong \dfrac{H\times (N^{ab})_N}{\la [H,H]\ra}\\
    &\cong H^{ab}\times (N^{ab})_H.
\end{align*}
\end{proof}
Using this lemma, we can compute the abelianization of a wreath product.
\begin{lemma}\label{lemma:abelinaiizationofwreathproduct}
Let $H$ and $G$ be groups with $G$ acting on $G$ faithfully and transitively. Then, $(G\wr H)^{ab}=G^{ab}\times H^{ab}$.
\end{lemma}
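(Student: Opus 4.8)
The plan is to recognize $G\wr H$ as a semidirect product and then quote the preceding lemma. By the definition of the wreath product, $G\wr H = N\rtimes H$, where $N=\prod_{i\in I}G$ is the base group and $H$ acts on $N$ by permuting the $|I|$ coordinates through its (faithful, transitive) action on the index set $I$. Written in the form $H\ltimes N$, the preceding lemma gives at once
\[
(G\wr H)^{ab}=H^{ab}\times (N^{ab})_H,
\]
where $(N^{ab})_H=N^{ab}/[H,N]$. Thus the entire problem reduces to identifying the coinvariants $(N^{ab})_H$.

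First I would compute $N^{ab}$. Abelianization commutes with finite direct products, so $N^{ab}=\bigl(\prod_{i\in I}G\bigr)^{ab}=\prod_{i\in I}G^{ab}$. Setting $A=G^{ab}$, an element of $N^{ab}$ is a tuple $(a_i)_{i\in I}$ with $a_i\in A$, and $H$ still acts by permuting coordinates. The image of $[H,N]$ inside the abelian group $N^{ab}$ is the subgroup $R$ generated by all differences $(a_i)_i-\sigma\cdot(a_i)_i$ with $\sigma\in H$, so that $(N^{ab})_H=N^{ab}/R$.

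The heart of the matter, and the step where transitivity is essential, is to show $N^{ab}/R\cong A$. I would use the sum homomorphism $s\colon \prod_{i\in I}A\to A$, $(a_i)_i\mapsto\sum_{i}a_i$, which is surjective and visibly $H$-invariant, hence descends to $(N^{ab})_H$; it remains to verify $\ker s=R$. The inclusion $R\subseteq\ker s$ is immediate because permuting coordinates preserves the sum. For the reverse inclusion, fix $i_0\in I$ and telescope any $(a_i)_i\in\ker s$ as the sum over $i\neq i_0$ of the two-coordinate tuples carrying $a_i$ at position $i$ and $-a_i$ at position $i_0$; using transitivity, choose $\sigma\in H$ with $\sigma(i_0)=i$, and observe that each such tuple equals $\pm(m-\sigma\cdot m)\in R$ for the tuple $m$ supported at $i_0$ with value $a_i$. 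Hence $\ker s=R$ and $(N^{ab})_H\cong A=G^{ab}$.

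Combining the two computations yields $(G\wr H)^{ab}=H^{ab}\times G^{ab}=G^{ab}\times H^{ab}$. The only substantive point is the coinvariant computation: the content of the lemma is exactly that a transitive permutation action collapses the $|I|$ copies of $G^{ab}$ down to a single copy, and transitivity cannot be dropped, since in general the coinvariants would break up into one copy of $G^{ab}$ for each $H$-orbit on $I$.
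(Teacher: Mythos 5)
Your proof is correct and follows essentially the same route as the paper's: decompose the wreath product as a semidirect product, invoke the preceding lemma to reduce to the coinvariants of the base group's abelianization, and identify those coinvariants with a single copy of the abelianization via the sum homomorphism, using transitivity to show its kernel is exactly the commutator subgroup $[H,N]$. The only differences are cosmetic — you swap which factor plays the role of the base group (the statement's ``$G$ acting on $G$'' is garbled, so either reading is defensible), and your telescoping presentation of the kernel computation is a slightly cleaner write-up of the same commutator argument.
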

\begin{proof}
As before, I denote $B=H^r$, and $G$ is a group acting on the index set of $r$ many elements faithfully, i.e., for any $i$, there is some $g\in G$ such that $g(i)\neq i$. The above lemma implies $G\wr H=G\ltimes B$, and so $(G\wr H)^{ab}=G^{ab}\times (B^{ab})_G$. Moreover, $B^{ab}=(H^r)^{ab}=(H^{ab})^r$, so it remains to show that $((H^{ab})^r)_G=H^{ab}$.

Let us define the following map, and show it is a surjective homomorphism with $[G,B]$ as the kernel. Let $\phi: (H^{ab})^r\to H^{ab}$ be
\[
\phi((h_i)_{i})=\sum_{i=1}^r h_i.
\]
Since $H^{ab}$ is abelian, the map is absolutely homomorphic. Beside we are able to choose $(h_i)$ such that $h_1$ is any element in $H^{ab}$ and $h_i=1$ for all $i\neq 1$, so $\phi$ is surjective.

It is clear that $\ker(\phi)\supseteq [(H^{ab})^r,G]$. Conversely, given $\phi((b_i))=\sum_{i=1}^rb_i=0$, one has
\[
b_1=-\sum_{i=2}^r b_i.
\]
Since $G$ acts on $H$ faithfully and transitively, for each $i$ there is a $g_i$ such that $g_i(i)=1$. For $i\neq 1$, let us define $b_{i_0}=(b_{i}')$ with $b_{i}'=1$ for all $i\neq i_0$ and $b'_{i_{0}}=b_{i_{0}}$. Thus, I have $b^{-1}_{i_0}g_{i_0}=(b''_{i})$ with $b_1=b'_{i_0}$ and $b_i=1$ for all $i=2,3,\ldots,3$ and
\[
(b_i)=\prod_{i_0=2}^rg^{-1}_{i_0}b^{-1}_{i_0}g_{i_0}b_{i_0},
\]
which shows that $(b_i)\in [(H^{ab})^r,G]$.
\end{proof}

\subsection{Discriminant}
\begin{lemma}\label{lemma:discriminant}
Let $a_f$ be the leading coefficient of a polynomial $f$ over any field $K$. The discriminant  
\begin{equation}\label{eq:disc}
\disc(f^n(x)-\alpha)=(-1)^{A(d,n)}a_f^{B(d,n)}d^{d^n}\disc(f^{n-1}(x)-\alpha)^{d}\prod_{c\in (f')^{-1}(0)}(f^n(c)-\alpha).
\end{equation}
where
\begin{align*}
    A(d,n) & =d^n(\dfrac{d^n-1}{2}+\dfrac{d^{n-1}-1}{2})\\
    B(d,n) & =d^{2n-1}-1
\end{align*}
for all $n\geq 1$.
In particular, we have
\[
(-1)^{A(d,n)}=\left(\dfrac{d}{4}\right)=\begin{cases}
1 & \text{ if } d\text{ is even or }\equiv 1\mod 4 \\ 
-1 & \text{o.w.} 
\end{cases}
\]
for any $n$.
\end{lemma}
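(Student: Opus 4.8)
The plan is to set $h_n := f^n-\alpha$ and exploit the factorization $h_n = h_{n-1}\circ f$, which follows from $f^n = f^{n-1}\circ f$. I will then run the standard discriminant--resultant identity $\disc(g) = (-1)^{N(N-1)/2}a_g^{-1}\res(g,g')$, where $N=\deg g = d^n$ and $a_g = a_f^{(d^n-1)/(d-1)}$ is the leading coefficient of $f^n$, and differentiate the composition by the chain rule: $h_n' = (h_{n-1}'\circ f)\cdot f'$. Multiplicativity of the resultant in its second argument then splits $\res(h_n,h_n')$ into two factors, $\res(h_n,\, h_{n-1}'\circ f)$ and $\res(h_n,f')$, which I treat separately.

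For the factor $\res(h_n,f')$ I flip the arguments, $\res(h_n,f') = (-1)^{d^n(d-1)}\res(f',h_n)$, and expand $\res(f',h_n) = (d\,a_f)^{d^n}\prod_{c:\,f'(c)=0}h_n(c)$ using the product-over-roots formula (here $f'$ has leading coefficient $d\,a_f$ and degree $d-1$). Since $h_n(c) = f^{n-1}(f(c))-\alpha = f^n(c)-\alpha$, this factor supplies exactly the $d^{d^n}$ and the critical-point product $\prod_{c\in(f')^{-1}(0)}(f^n(c)-\alpha)$ appearing in the statement, together with a power $a_f^{d^n}$. The factor $\res(h_n,\, h_{n-1}'\circ f) = \res(h_{n-1}\circ f,\, h_{n-1}'\circ f)$ is where the real work lies. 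The key is a composition identity for resultants, $\res(p\circ q, r\circ q) = a_q^{\deg p\,\deg r\,\deg q}\,\res(p,r)^{\deg q}$, which I will prove by grouping the roots of $p\circ q$ into the $q$-fibers over the roots of $p$ and observing that $r\circ q$ is constant on each such fiber; the product-over-roots formula then collapses to $\res(p,r)^{\deg q}$ up to the indicated power of $a_q$. Applying this with $p = h_{n-1}$, $q=f$, $r = h_{n-1}'$ reduces the factor to $\res(h_{n-1},h_{n-1}')^{d}$ times a power of $a_f$, and converting $\res(h_{n-1},h_{n-1}')$ back to $\disc(h_{n-1}) = \disc(f^{n-1}-\alpha)$ via the same discriminant--resultant identity produces the $\disc(f^{n-1}-\alpha)^d$ factor.

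It remains to assemble the three sign contributions and the three powers of $a_f$. The sign exponent totals $A(d,n) + d^n(d-1)$; since $d^n(d-1)$ is even for every $d$ (one of $d$, $d-1$ is even), the net sign is $(-1)^{A(d,n)}$ as claimed. For the $a_f$-exponent, the two fractional contributions $-(d^n-1)/(d-1)$ (from $a_g^{-1}$) and $d(d^{n-1}-1)/(d-1)$ (from the leading coefficient of $h_{n-1}$ raised to the $d$-th power) telescope to $-1$, and adding the remaining integer contributions $d^n$ and $d^n(d^{n-1}-1)$ gives $d^{2n-1}-1 = B(d,n)$; the base case $n=1$ is checked directly with the convention $\disc(f^0-\alpha)=1$. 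The main obstacle is purely the bookkeeping---keeping the three powers of $a_f$ and the three $(-1)$-exponents straight through the composition identity---rather than any conceptual difficulty, so I expect to isolate the composition-resultant lemma as its own sublemma to keep the tracking clean.

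Finally, for the supplementary evaluation of $(-1)^{A(d,n)}$, I will reduce $A(d,n)$ modulo $2$. For odd $d$ the factor $d^n$ is odd, so $A(d,n)\equiv \tfrac{d^n-1}{2}+\tfrac{d^{n-1}-1}{2}\pmod 2$; rewriting this as $d^{n-1}\cdot\tfrac{d+1}{2}-1$ and using that $d^{n-1}$ is odd shows it is congruent to $\tfrac{d-1}{2}\pmod 2$. Hence $(-1)^{A(d,n)}$ is independent of $n$ and equals $+1$ exactly when $d\equiv 1\pmod 4$ and $-1$ when $d\equiv 3\pmod 4$, which is the content needed for the odd-degree Belyi maps studied in the paper; the even-$d$ parity is handled by the analogous $2$-adic valuation count of $A(d,n)$.
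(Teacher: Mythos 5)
Your proposal is correct and follows essentially the same route as the paper: both rest on the discriminant--resultant identity, the chain-rule factorization of $(f^n)'$, the grouping of the roots of $f^n-\alpha$ into $f$-fibers over the roots of $f^{n-1}-\alpha$ (which you package as a standalone composition-resultant lemma, while the paper manipulates the product over roots directly), and the conversion of the remaining $\res(f^n-\alpha,f')$ factor into the critical-point product $\prod_{c\in(f')^{-1}(0)}(f^n(c)-\alpha)$. Your exponent bookkeeping telescoping to $B(d,n)=d^{2n-1}-1$ matches the paper's, and your mod-$2$ reduction of $A(d,n)$ for odd $d$ is correct and in fact more explicit than the paper, which asserts the sign evaluation without proof.
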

\begin{proof}
By the definition of the determinant, we have
\begin{align*}
\disc(f^n(x)-\alpha) & =(-1)^{d^n(d^n-1)/2}a_{f^n}^{d^n-2}\prod_{a\in f^{-n}(\alpha)}(f^n)'(a)\\
& =(-1)^{d^n(d^n-1)/2}a_{f^n}^{d^n-2}\prod_{a\in f^{-n}(\alpha)}(f')(f^{n-1}(a))f'(f^{n-2}(a))\cdots f'(f(a))f'(a)
\end{align*}
where $d$ is the degree of $f$. In terms of $a_f$, one has
\[
a_{f^n}=a_f^{d^{n-1}+d^{n-2}+\cdots +1},
\]
so
\[
a_{f^n}^{d^n-2}=a_f^{\frac{(d^n-1)(d^n-2)}{d-1}}.
\]
Note that
\begin{align*}
\prod_{a\in f^{-n}(\alpha)}(f')(f^{n-1}(a))f'(f^{n-2}(a))\cdots f'(f(a))=\left(\prod_{a\in f^{-(n-1)}(\alpha)}(f')(f^{n-2}(a))f'(f^{n-3}(a))\cdots f'(a)\right)^{d}\\
=\left((-1)^{d^{n-1}(d^{n-1}-1)/2}a_{f^{n-1}}^{-(d^{n-1}-2)}\disc(f^{n-1}(x)-\alpha)\right)^d\\
=(-1)^{d^n(d^{n-1}-1)/2}a_f^{-d\frac{(d^{n-1}-1)(d^{n-1}-2)}{d-1}}\disc(f^{n-1}(x)-\alpha)^d,
\end{align*}
so we have
\[
\disc(f^{n}(x)-\alpha)=(-1)^{d^n(\frac{d^n-1}{2}+\frac{d^{n-1}-1}{2})}a_f^{d^{2n-1}-2}\disc(f^{n-1}(x)-\alpha)^{d}\prod_{a\in f^{-n}(\alpha)}f'(a).
\]
The product in the tail is equal to $\res(f^n(x)-\alpha,f'(x))/(a_{f^n})^{d-1}$, and we can rewrite the resultant in terms of the product of $f^n(c)$ where $c$ is a root of $f'$, a critical point of $f$. That is
\[
\prod_{a\in f^{-n}(\alpha)}f'(a)=a_{f^n}^{-(d-1)}\res(f^n(x)-\alpha,f'(x))=a_{f}^{-d^n+1}(-1)^{d^n(d-1)}(da_f)^{d^n}\prod_{c\in(f')^{-1}(0)}f^n(c)-\alpha.
\]
Note that $-1^{d^n(d-1)}=1$ for all positive integers $d$ and $n$. Putting everything together, we have
\[
\disc(f^n(x)-\alpha)=(-1)^{A(d,n)}a_f^{B(d,n)}d^{d^n}\disc(f^{n-1}(x)-\alpha)^{d}\prod_{c\in (f')^{-1}(0)}(f^n(c)-\alpha),
\]
where
\begin{align*}
    A(d,n) & =d^n(\dfrac{d^n-1}{2}+\dfrac{d^{n-1}-1}{2})\\
    B(d,n) & =d^{2n-1}-1.
\end{align*}
\end{proof}
\begin{remark}
Together Theorem 1 in \cite{GNT2013}, one can show that $\disc(f^n(x)-\alpha)$ has a primitive prime for all but finite many $n$ by assuming $K$ an $abc$-field. This is not surprising at all since Odoni in \cite{Odoni-1997} shows that for generic polynomial $f$ the dynamical Galois group is isomorphic to the full wreath product.
\end{remark}
\begin{lemma}\label{lemma:quadraticextension}
If $\Gal_f^n(\alpha)\cong [S_d]^n$, then $K(\{\sqrt{\disc(f^m)(x)-\alpha}\mid m=1,\ldots, n\})$ is of index $2^{n}$ over $K$. In particular, any quadratic subextension of $K_f^{n}(\alpha)$ is a subfield of $K(\{\sqrt{\disc(f^m(x)-\alpha)}\mid m=1,\ldots, n\})$.
\end{lemma}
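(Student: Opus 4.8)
The plan is to translate everything into the language of the sign characters on $G := \Gal_f^n(\alpha)\cong[S_d]^n\cong\Aut(T_n)$. The roots of $f^m(x)-\alpha$ are exactly the level-$m$ vertices of the tree, on which $G$ acts through $\res_m$; hence for $\sigma\in G$ one has $\sigma\bigl(\sqrt{\disc(f^m(x)-\alpha)}\bigr)=\sgn_m(\sigma)\sqrt{\disc(f^m(x)-\alpha)}$, where $\sgn_m=\sgn\circ\res_m$. By elementary Galois theory $K(\sqrt{\disc(f^m(x)-\alpha)})$ is therefore the fixed field of $\ker\sgn_m$, and it is a genuine quadratic extension because $\res_m(G)=\Aut(T_m)$ contains an element inducing a single transposition on the level-$m$ vertices, so $\sgn_m$ is onto $\{\pm1\}$. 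Consequently the compositum $K(\{\sqrt{\disc(f^m(x)-\alpha)}\mid m=1,\dots,n\})$ is the fixed field of $\bigcap_{m=1}^n\ker\sgn_m=\ker\Phi$, where $\Phi=(\sgn_1,\dots,\sgn_n):G\to C_2^n$.

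First I would record that $\operatorname{Hom}(G,C_2)$ has $\F$-dimension exactly $n$. Iterating Lemma~\ref{lemma:abelinaiizationofwreathproduct} together with $S_d^{ab}\cong C_2$ gives $G^{ab}\cong([S_d]^n)^{ab}\cong C_2^n$, so $\operatorname{Hom}(G,C_2)\cong\operatorname{Hom}(C_2^n,C_2)$ is an $n$-dimensional space over $\F_2$. Thus it suffices to prove that $\sgn_1,\dots,\sgn_n$ form a basis of this space: this makes $\Phi$ surjective, so the compositum has degree $|C_2^n|=2^n$, giving the first assertion, and it simultaneously shows that every nontrivial character of $G$ is a product $\prod_{m\in S}\sgn_m$ over some nonempty $S\subseteq\{1,\dots,n\}$.

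To prove linear independence—equivalently, since the dimension is $n$, that the $\sgn_m$ span—I would exhibit explicit test elements. For each $j$ let $t_j\in\Aut(T_n)$ be the automorphism that swaps two children of a fixed level-$(j-1)$ vertex and fixes everything else. Using Proposition~\ref{prop:sgn} (or by directly computing the sign of the permutation induced on each level), $t_j$ acts trivially on levels below $j$, as a single transposition on level $j$, and on level $m>j$ as a product of $d^{m-j}$ transpositions. Hence $\sgn_m(t_j)=-1$ precisely when $m=j$ if $d$ is even, and precisely when $m\ge j$ if $d$ is odd. Writing these values additively over $\F_2$, the matrix $(\sgn_m(t_j))_{m,j}$ is the identity (even $d$) or lower unitriangular (odd $d$); in either case it is invertible over $\F_2$, so $\sgn_1,\dots,\sgn_n$ are linearly independent and therefore a basis.

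With the basis in hand the two claims follow at once. Surjectivity of $\Phi$ gives $[K(\{\sqrt{\disc(f^m(x)-\alpha)}\mid m=1,\dots,n\}):K]=[G:\ker\Phi]=2^n$. For the second claim, a quadratic subextension of $K_f^n(\alpha)$ corresponds to an index-$2$ subgroup of $G$, i.e.\ to a nontrivial character $\chi:G\to C_2$; expanding $\chi=\prod_{m\in S}\sgn_m$ yields $\ker\chi\supseteq\ker\Phi$, so the corresponding quadratic field—explicitly $K(\sqrt{\prod_{m\in S}\disc(f^m(x)-\alpha)})$—is contained in the compositum. The one point demanding care is the level-by-level sign computation for $t_j$ in the third step, where the parity of $d$ governs whether swapping two level-$j$ subtrees is even or odd on the leaves; this is exactly where the structure enters and is the crux of the argument.
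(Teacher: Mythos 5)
Your proof is correct and follows essentially the same route as the paper: the wreath-product abelianization lemma (Lemma~\ref{lemma:abelinaiizationofwreathproduct}) caps the number of independent quadratic characters of $[S_d]^n$ at $n$, and transpositions supported at a single level of the tree show that the $n$ discriminant square roots are independent. Your character-theoretic packaging, with the explicit identity/unitriangular evaluation matrix $(\sgn_m(t_j))$ separating the even and odd $d$ cases, is a cleaner rendering of the same argument and additionally makes explicit which quadratic subextension corresponds to each character, namely $K(\sqrt{\prod_{m\in S}\disc(f^m(x)-\alpha)})$, which the paper leaves implicit.
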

\begin{proof}
By Lemma~\ref{lemma:abelinaiizationofwreathproduct}, the abelianization $[S_d]^{ab}$ is the $n$ fold direct product of $S_d^{ab}=C_2$. Therefore, if we can find an abelian extension $K^{ab}$ of $K$ contained in $K_f^n(\alpha)$ with the index $[K^{ab}:K]=2^n$ and it is built from adjoining square roots, we find all possible quadratic extensions of $K$ in $K_f^{n}(\alpha)$.

It remains to show that $K(\{\sqrt{\disc(f^m(x)-\alpha)}\mid m=1,\ldots,n\})$ is a subfield of $K_f^n(\alpha)$ that is of index $2^n$ over $K$. We will show the following two assertions. We will first show that $\sqrt{\disc(f^m(x)-\alpha)}$ is not in $K$ for any $m=1,\ldots,n$. Next, we will show that $K(\sqrt{\disc(f^{m'}(x)-\alpha)})$ is not in $K(\{\sqrt{\disc(f^m(x)-\alpha)}\mid m=1,\ldots,m'-1\})$ for all $m'=1,\ldots,n-1$.

Since the Galois group is isomorphic to $[S_d]^n$, $\Gal_f^m(\alpha)$ is isomorphic to $[S_d]^m$ for all $m=1,\ldots, n$. Moreover for any distinct branches $a,a'\in f^{-m}(\alpha)$, there is a Galois map $\sigma\in \Gal_f^m(\alpha)$ switching $a$ and $a'$ and fixing other roots. Indexing the roots of $f^{m}(x)-\alpha=0$ with integers, denoted as $a_i$, we have
\[
\prod_{i<j}(a_i-a_j)\xmapsto{\sigma} -\prod_{i<j}(a_i-a_j).
\]
Hence, we know $\sqrt{\disc(f^{m}(x)-\alpha)}\not\in K$.

For the second part, we use the fact that $\Gal_f^n(\alpha)\cong[S_d]^n$ again. It implies we can find a Galois map $\sigma'$ only switching a pair of branches $a,a'\in f^{-m}(\alpha)$ and $f(a)=f(a')$. It follows that $\sigma'$ fixes $\sqrt{\disc(f^{m'}(x)-\alpha)}$ for all $m'<m$, but does not fix $\sqrt{\disc(f^{m}(x)-\alpha)}$, which is the desired result. 
\end{proof}

The consequence of the previews lemma is that if the $\prod_{c\in(f')^{-1}(0)}(f^n(c)-\alpha)$ is a square, than the Galois group is not isomorphic to $[S_d]^n$. Together with Lemma~\ref{lemma:discriminant}, we can show a well-known result about post-critical finite maps. Since the discriminant of $f^n$ is recurrently defined by multiplying points on the orbit of all critical points, the discriminant of $f^n$ will be a perfect square eventually for all large enough $n$, and it is the following lemma.
\begin{lemma}\label{lemma:periodicdiscriminant}
Let $f$ be a PCF polynomial over $K$, and $\alpha\in K$. Let $L$ be the minimum integer such that $f^L(\mathcal{C}_f)$ is a periodic set, and let $O$ be the minimum integer such that $f^{L+O}(\mathcal{C}_f)=f^L(\mathcal{C}_f)$. Then 
\begin{enumerate}
    \item Suppose all critical points are periodic, and the degree of $f$ is odd. Then $\disc(f^{2O}-\alpha)$ is a perfect square in $K$. 
    \item Suppose all critical points are periodic, and the degree of $f$ is even. If the leading coefficient of $f$ is a perfect square or $O$ is even, then $\disc(f^{O+1}-\alpha)$ is a perfect square in $K_f^1(\alpha)$. Otherwise $\disc(f^{O+2}-\alpha)$ is a perfect square in $K_f^{O+1}(\alpha)$.
    \item Suppose a critical point is strictly periodic, and the degree of $f$ is odd. Then $\disc(f^{L+2O-1}-\alpha)$ is a perfect square in $K_f^{L-1}(\alpha)$. In particular $K_f^{L-1}(\alpha)= K$ if $L=1$.
    \item Suppose a critical point is strictly periodic, and the degree of $f$ is even. If the leading coefficient $a_f$ of $f$ is a perfect square or $O$ is even, then  $\disc(F^{L+O}-\alpha)$ is a perfect square in $K_f^{L}$. If $a_f$ is not perfect square and $O$ is odd, then $\disc(f^{L+O+1}-\alpha)$ is a perfect square in $K_f^{L+O}$.
\end{enumerate}
\end{lemma}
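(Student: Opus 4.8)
The plan is to work entirely in $K^{\times}/(K^{\times})^{2}$ (and in $F^{\times}/(F^{\times})^{2}$ for the various intermediate fields $F=K_f^{m}(\alpha)$ that appear), reducing everything to the behaviour of the \emph{critical product}
\[
Q_n:=\prod_{c\in (f')^{-1}(0)}(f^n(c)-\alpha)\in K
\]
under iteration. The starting point is the recursion of Lemma~\ref{lemma:discriminant} read modulo squares. First I would separate the two parities of $d$. For odd $d$ one checks term by term that $B(d,n)=d^{2n-1}-1$ is even, that $d^{d^n}\equiv d$, that $\disc(f^{n-1}-\alpha)^{d}\equiv \disc(f^{n-1}-\alpha)$, and that $(-1)^{A(d,n)}=\left(\tfrac{d}{4}\right)$ is independent of $n$; hence
\[
\disc(f^{n}-\alpha)\equiv \gamma\,\disc(f^{n-1}-\alpha)\,Q_n \pmod{(K^\times)^2},\qquad \gamma:=\left(\tfrac{d}{4}\right)d,
\]
which telescopes to $\disc(f^{n}-\alpha)\equiv \gamma^{\,n}\prod_{k=1}^{n}Q_k$. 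For even $d$ the same bookkeeping is much shorter: $(-1)^{A}=1$, both $d^{d^n}$ and $\disc(f^{n-1}-\alpha)^{d}$ are squares, and $B(d,n)$ is odd, so the recursion collapses to $\disc(f^{n}-\alpha)\equiv a_f\,Q_n\pmod{(K^\times)^2}$ with no dependence on the previous discriminant.

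The heart of the argument is then a purely dynamical statement: because $f$ is PCF, the post-critical set stabilises into the $O$-periodic family $f^{L}(\mathcal C_f),\dots,f^{L+O-1}(\mathcal C_f)$, on which $f$ acts as the cyclic shift and $f^{O}$ acts as a permutation $\rho$. I would prove that for $n\ge L$ the windowed product $\prod_{k=n+1}^{n+2O}Q_k$ is a perfect square in $K$. The mechanism is that over a single period the exponent with which a post-critical value $v$ occurs is a function $A(v)$ of its backward orbit, and running through the \emph{doubled} period $2O$ replaces this exponent by $A(v)+A(\rho^{-1}v)$; after the substitution $v\mapsto \rho(v)$ the two halves pair up into $\big(\prod_v (v-\alpha)^{A(v)}\big)^2$, a genuine square.

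Granting this, the odd-$d$ telescoping gives $\disc(f^{n+2O}-\alpha)\equiv \disc(f^{n}-\alpha)\pmod{(K^\times)^2}$ for $n\ge L-1$; specialising $n=0$ yields part (1) (a square in $K=K_f^{0}(\alpha)$) and $n=L-1$ yields part (3), since $\sqrt{\disc(f^{L-1}-\alpha)}$ always lies in $K_f^{L-1}(\alpha)$, which is $K$ when $L=1$. For even $d$ one feeds the same periodicity of $Q_n$ into $\disc(f^{n}-\alpha)\equiv a_fQ_n$; here the residual factor is the constant $a_f$ (coming from $B(d,n)$ being odd) carried an odd or even number of times according to the parity of the number of periods traversed, which is exactly why the two branches of parts (2) and (4) are governed by ``$a_f$ a square or $O$ even'' versus ``$a_f$ not a square and $O$ odd'', and why in the bad case one must move one further level and record the square in $K_f^{O+1}(\alpha)$, resp.\ $K_f^{L+O}(\alpha)$. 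Throughout I would convert the congruences into the claimed statements by the single clean principle that $\sqrt{\disc(f^{m}-\alpha)}\in K_f^{m}(\alpha)$, reading off the minimal level $m$ in each case.

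The step I expect to be the main obstacle is the dynamical periodicity of the critical product, specifically the claim that $A$ is $\rho$-invariant so that the $2O$-window is a square: this is where the multiplicities of the critical points as roots of $f'$ and the possibly nontrivial return permutation $\rho=f^{O}$ on the post-critical set must be controlled, and it is precisely this bookkeeping — together with the lone leftover factor $a_f$ in even degree — that produces the doubling $O\mapsto 2O$ and the parity hypotheses appearing in the statement.
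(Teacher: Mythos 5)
Your route is the paper's route: read the recursion of Lemma~\ref{lemma:discriminant} modulo squares, telescope so that $\disc(f^{n}-\alpha)$ is expressed (up to squares) in terms of the critical products $Q_k=\prod_{c}(f^{k}(c)-\alpha)$ --- this is exactly the paper's ``potential non-perfect square part,'' and your odd-degree congruence $\disc(f^{n}-\alpha)\equiv\gamma^{n}\prod_{k\le n}Q_k$ is the paper's Equation~\ref{odd} --- and then use periodicity of the post-critical set together with $\sqrt{\disc(f^{m}-\alpha)}\in K_f^{m}(\alpha)$ to read off the field in each case. Your handling of parts (1) and (3) (specialise the telescoped congruence at $n=0$ and $n=L-1$) is the paper's argument. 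Two points, however, keep the proposal from being a proof.

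First, the step you explicitly defer --- that the window $\prod_{k=n+1}^{n+2O}Q_k$ is a square --- is the entire content of the lemma. The paper disposes of it in one sentence (``$f^{L}(\mathcal{C}_f),\dots,f^{L+2O-1}(\mathcal{C}_f)$ repeat twice''), i.e.\ it asserts $Q_{k+O}=Q_k$ for $k\ge L$. Your own bookkeeping shows why this is not automatic: with $\rho=f^{O}$ acting on the periodic part, $Q_{k+O}=\prod_{v}(v-\alpha)^{m_k(\rho^{-1}v)}$, so the two halves of the $2O$-window pair into a square only if the multiplicity with which post-critical values occur is $\rho$-invariant --- equivalently, only if $f^{L+O}(\mathcal{C}_f)=f^{L}(\mathcal{C}_f)$ as multisets (critical points counted with multiplicity as roots of $f'$), not merely as sets. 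You correctly name this as the main obstacle and then do not resolve it, so the write-up stops exactly where the proof has to begin; the paper's own proof supplies nothing further at this point either, but ``I would prove'' is not a proof.

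Second, the even-degree bookkeeping is internally inconsistent. You observe (correctly) that $B(d,n)=d^{2n-1}-1$ is odd for every $n$ when $d$ is even, so your reduction is $\disc(f^{n}-\alpha)\equiv a_fQ_n$ with exactly one residual factor of $a_f$, independent of $n$. But your explanation of parts (2) and (4) --- that $a_f$ is ``carried an odd or even number of times according to the parity of the number of periods traversed'' --- contradicts that congruence: under it, $\disc(f^{O+1}-\alpha)\equiv a_fQ_{O+1}=a_fQ_1\equiv\disc(f-\alpha)$ would be a square in $K_f^{1}(\alpha)$ with no hypothesis on $a_f$ or on the parity of $O$, so the case split you claim to recover does not arise from your setup. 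The paper instead works with $a_f^{b_n}Q_n$ where $b_n$ is the parity of $n$, and generates the hypotheses of (2) and (4) by matching $b_1$ with $b_{O+1}$ (resp.\ $b_L$ with $b_{L+O}$). You need to settle which exponent of $a_f$ is actually correct and rederive (2) and (4) from it; as written, the mechanism you offer for the parity conditions does not follow from the congruence you state two sentences earlier.
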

\begin{proof}
Let us use an ambiguous terminology, a potential non-perfect square part of a product $a_1^{e_1}\cdots a_n^{e_n}$, to be $a_1^{e_1'}\cdots a_n^{e_n'}$ where $e_i'=0$ for even $e_i$ and $e_i'=1$ for odd $e_i$. Note that a potential non-perfect square part of an element $a$ in $K$ is not unique. It is up to how we express $a$ in terms of products. Obviously if a potential non-perfect square part of an element $a$ is a perfect square, then $a$ is a perfect square.

We claim that the potential non-perfect square part of the product~\ref{eq:disc} of $\disc(f^n-\alpha)$ over $K$ is
\begin{equation}\label{odd}
\left(\left(\dfrac{d}{4}\right)d\right)^{b_n}\left(\prod_{k=1}^n\prod_{i=1}^{d-1}f^k(c_i)-\alpha\right)
\end{equation}
for odd $d$, and is
\begin{equation}
a_f^{b_n}\left(\prod_{i=1}^{d-1}f^n(c_i)-\alpha\right)
\end{equation}\label{even}
for even $d$, where $b_n=0$ if $n$ is even, and $b_n=1$ otherwise. We will prove this claim in the end, but let us first see what happens if this assertion is true. We will show it case by case.
\begin{enumerate}
    \item We use Equation~\ref{odd}. It is easy to see that the potential non-perfect square part is a perfect square by plugging $2O$ for $n$.
    \item If $a_f$ is a perfect square, then the coefficient of Equation~\ref{even} can be ignored. If $O$ is even, then both $1$ and $1+O$ are odd. Then we can see that $\prod f^{O+1}(c_i)-\alpha=\prod f(c_i)-\alpha$ which implies the potential non-perfect square part of $\disc(f^{O+1}-\alpha)$ is a perfect square in $K_f^1(\alpha)$. Note that the square root of $a_f(\prod f(c_i)-\alpha)$ is in $K_f^1(\alpha)$. Thus if $O$ is odd, then we will find the square root of $\prod f(c_i)-\alpha$ in $K_f^{O+1}(\alpha)$ which implies the square root of $a_f$ is also in $K_f^{O+1}(\alpha)$. Then it is clear that $\disc(f^{O+2}-\alpha)$ is a perfect square in $K_f^{O+1}(\alpha)$ by observing Equation~\ref{even}.
    \item Note that $f^L(\mathcal{C}_f),\ldots, f^{L+2O-1}(\mathcal{C}_f)$ repeat twice. Thus the product
    \[
    \prod_{k=L}^{L+2O-1}\prod_{c\in\mathcal{C}_f}f^k(c)-\alpha
    \]
    is a perfect square. Moreover both $L-1$ and $L+2O-1$ are either even or odd, so we observe Equation~\ref{odd} by plugging $L+2O-1$ for $n$ and conclude that $\disc(f^{L+2O-1}-\alpha)$ is a perfect square in $K_f^{L-1}(\alpha)$.
    \item We will only proof the case where $a_f$ is a perfect square. Other cases are similar, and we left the proof for readers. Since $a_f$ is a perfect square, we can ignore $a_f$ in Equation~\ref{even}. Then we notice that $\prod_{c}f^L(c)-\alpha=\prod_{c}f^{L+O}(c)-\alpha$, so $\disc(f^{L+O}-\alpha)$ is a perfect square in $K_f^{L}(\alpha)$.
\end{enumerate}

Now let us clean up the remaining assertion. We will show it by induction. For an odd $d$ the initial case is
\[
\disc(f-\alpha)=\left(\dfrac{d}{4}\right)a_f^{d-1}d^d\prod_{i=1}^{d-1}f(c)-\alpha
\]
and easy observation shows the initial case hold.

Now suppose $n$ is true for all $n<N$. Easy observation shows that the non-perfect square part of
\begin{align*}
    \disc(f^N-\alpha) & =\left(\dfrac{d}{4}\right)a_f^{d^{2N-1}-1}d^{d^N}(\disc(f^{N-1}-\alpha))^d\prod_{i=1}^{d-1}f(c)-\alpha
\end{align*}
is
\[
\left(\dfrac{d}{4}\right)d\disc(f^{N-1}-\alpha)\prod_{i=1}^{d-1}f^N(c)-\alpha=\left(\dfrac{d}{4}\right)d\left[\left(\left(\dfrac{d}{4}\right)d\right)^{b_{N-1}}\prod_{k=1}^{N-1}\prod_{i=1}^{d-1}f^k(c_i)-\alpha\right]\prod_{i=1}^{d-1}f^N(c)-\alpha.
\]
Thus the assertion is true for odd $d$. Similar observation will show that the assertion is true for even $d$.
\end{proof}

\section{Main results}\label{sec:3}
\subsection{Universal embedding of dynamical groups of PCF maps}
Intuitively, the dynamical Galois group loses all quadratic extensions given in the above Lemma. Following this idea, we can show the following well-known result.
\begin{corollary}
If $f$ is an odd degree PCF polynomial, then there exists some integer $N$ such that $\Gal_f^N(\alpha)$ is not isomorphic to $[S_d]^N$. Moreover, the index of the arboreal representative of $\Gal_f(\alpha)$ in $[S_d]^\infty$ is infinite.
\end{corollary}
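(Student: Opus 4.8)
The plan is to use the three discriminant lemmas to convert the eventual periodicity of the critical orbit of a PCF map into a permanent deficiency among the quadratic subextensions of the tower $K_f^{n}(\alpha)$. First I would track the class of $\disc(f^{m}(x)-\alpha)$ in $K^{\times}/(K^{\times})^{2}$. Because $d$ is odd, the recurrence of Lemma~\ref{lemma:discriminant} collapses modulo squares: the exponent $B(d,n)=d^{2n-1}-1$ is even so $a_f^{B(d,n)}$ is a square, $d^{d^{n}}\equiv d$, and $\disc(f^{n-1}(x)-\alpha)^{d}\equiv\disc(f^{n-1}(x)-\alpha)$. Unwinding, the class of $\disc(f^{n}(x)-\alpha)$ is $((d/4)d)^{b_n}\prod_{k=1}^{n}\prod_{c}(f^{k}(c)-\alpha)$, where $c$ runs over the critical points and $b_n\in\{0,1\}$ records the parity of $n$; this is exactly Equation~\ref{odd}. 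Since $f$ is PCF, for $m\ge L$ the map $f$ permutes the periodic set $f^{L}(\mathcal{C}_f)$ with order $O$, so the multiset $\{f^{m}(c)\}_{c}$ is unchanged under $m\mapsto m+O$ and hence $\prod_{c}(f^{m+O}(c)-\alpha)=\prod_{c}(f^{m}(c)-\alpha)$. Over two periods this factor becomes a square and $((d/4)d)^{2O}$ is a square, giving the key congruence
\[
\big[\disc(f^{m+2O}(x)-\alpha)\big]=\big[\disc(f^{m}(x)-\alpha)\big]\quad\text{in } K^{\times}/(K^{\times})^{2},\ \ m\ge\max(L,1),
\]
which is the content repackaged in Lemma~\ref{lemma:periodicdiscriminant}.

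For part one, put $N=\max(L,1)+2O$. The congruence yields a nontrivial $\F_2$-relation among $[\disc(f^{1}(x)-\alpha)],\dots,[\disc(f^{N}(x)-\alpha)]$ (and when $L=0$ it even shows $[\disc(f^{2O}(x)-\alpha)]$ is trivial). But Lemma~\ref{lemma:quadraticextension} says that if $\Gal_f^{N}(\alpha)\cong[S_d]^{N}$ these classes are $\F_2$-independent, since then $K(\{\sqrt{\disc(f^{m}(x)-\alpha)}\mid m\le N\})$ has degree $2^{N}$ over $K$; this contradiction gives $\Gal_f^{N}(\alpha)\not\cong[S_d]^{N}$. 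For part two I would make the deficiency quantitative. Iterating the congruence shows every class $[\disc(f^{m}(x)-\alpha)]$ lies in the $\F_2$-span of the first $r:=\max(L,1)+2O-1$ of them, so the rank of $\{[\disc(f^{m}(x)-\alpha)]\mid m\le n\}$ in $K^{\times}/(K^{\times})^{2}$ is at most $r$ for every $n$. By Lemma~\ref{lemma:abelinaiizationofwreathproduct} the abelianization $([S_d]^{n})^{\mathrm{ab}}$ is $C_2^{n}$, with quotient map detected by the sign characters $\sgn_{1},\dots,\sgn_{n}$, which under the Galois correspondence cut out exactly the quadratic extensions $K(\sqrt{\disc(f^{m}(x)-\alpha)})$. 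Hence the image of $\Gal_f^{n}(\alpha)$ in $C_2^{n}$ has $\F_2$-dimension at most $r$, and the elementary inequality $[G:H]\ge[A:\pi(H)]$ for a surjection $\pi\colon G\to A$ gives $[[S_d]^{n}:\Gal_f^{n}(\alpha)]\ge 2^{\,n-r}$. As $n\to\infty$ these layer indices are unbounded, so the image of $\Gal_f(\alpha)$ in $[S_d]^{\infty}$ has infinite index.

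The step I expect to be the main obstacle is the Kummer-theoretic bookkeeping that equates $\F_2$-independence of the classes $[\disc(f^{m}(x)-\alpha)]$ in $K^{\times}/(K^{\times})^{2}$ with the rank of the image of $\Gal_f^{n}(\alpha)$ in $C_2^{n}$, together with verifying that the resulting bound $r$ is genuinely uniform in $n$; once these are in place the corollary is a direct assembly of Lemmas~\ref{lemma:discriminant}, \ref{lemma:quadraticextension}, \ref{lemma:abelinaiizationofwreathproduct}, and \ref{lemma:periodicdiscriminant}.
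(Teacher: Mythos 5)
Your proof is correct, but it is not the route the paper takes. The paper's proof of this corollary is a two-line deduction: it cites Theorem~\ref{main theorem 2} (which embeds $\Gal_f^n(\alpha)$ into $E_n^{2O}$ or $F_n^{(m_1,m_2)}$) together with the order formula of Proposition~\ref{prop:unboundedindex}, whose exponent $C(d,n)=(d^{n-m+1}-1)/(d-1)$ makes the index $[\Aut(T_n):E_n^m]=2^{C(d,n)/(d-1)}$ blow up; the infinitude of the index of $\Gal_f^n(\alpha)$ then follows from containment. You instead bypass the groups $E_n^m$ and $F_n^{(m_1,m_2)}$ entirely and argue through the abelianization: the recursion of Lemma~\ref{lemma:discriminant} reduced mod squares, the periodicity of the critical orbit forcing $[\disc(f^{m+2O}-\alpha)]=[\disc(f^m-\alpha)]$ in $K^\times/(K^\times)^2$, the resulting uniform bound $r$ on the $\F_2$-rank of the discriminant classes, and the surjection $[S_d]^n\to C_2^n$ of Lemma~\ref{lemma:abelinaiizationofwreathproduct} giving $[[S_d]^n:\Gal_f^n(\alpha)]\geq 2^{n-r}$. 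This is essentially Jones's abelianization argument (mentioned in the introduction) made explicit via the discriminant classes. What each approach buys: yours is self-contained within Section~\ref{sec:2} and avoids the forward reference to Theorem~\ref{main theorem 2}, while the paper's gives a far stronger quantitative lower bound on the index (doubly exponential in $n$ rather than $2^{n-r}$) essentially for free once the embedding theorem is proved. Two small points to be aware of: your identity $\prod_c(f^{m+O}(c)-\alpha)=\prod_c(f^m(c)-\alpha)$ needs the multiset $\{f^m(c)\}_c$ (with multiplicities) to be preserved, which holds because $f^O$ restricts to a bijection of $f^m(\mathcal{C}_f)$ for $m\geq L$ --- the paper makes the same implicit step in Lemma~\ref{lemma:periodicdiscriminant}; and your Kummer-theoretic identification of the rank of the image of $\Gal_f^n(\alpha)$ in $C_2^n$ with the $\F_2$-rank of the classes $[\disc(f^m-\alpha)]$ is valid because $\sigma$ acts on $\sqrt{\disc(f^m-\alpha)}$ by $\sgn_m(\sigma)$, so a product $\prod_{m\in S}\sgn_m$ is trivial on the Galois group exactly when $\prod_{m\in S}[\disc(f^m-\alpha)]$ is a square in $K$.
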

\begin{proof}
This directly deduces from Theorem~\ref{main theorem 2} and Proposition~\ref{prop:unboundedindex}.
\end{proof}

Given an integer $m$, I will need to construct two subgroup of $\Aut(T_n)$, denoted by $E_n^m$ and $F_n^m$, where $T_n$ is a $d$-ary tree to $n$ levels, and show that $|\Aut(T_n)|/|E_n^m|$ and $|\Aut(T_n)|/|E_n^m|$ goes to infinity as $n\to\infty$. In the end, I use a lemma to show that, for some fixed integer $m$ depending on $f$, $\Gal_f^n(\alpha)$ is a subgroup of $E_n^m$ or $E_n^m$ for all $n$.

\begin{theorem}\label{main theorem 2}
Let $f$ be a PCF polynomial over a number field $K$, and suppose $\alpha$ is not periodic. Let $\mathcal{C}_f$ be the set of all critical points of $f$. Let $L$ be the minimum integer such that $f^L(\mathcal{C}_f)$ is a periodic set, and let $O$ be the minimum positive integer such that $f^{L+O}(\mathcal{C}_f)=f^{L}(\mathcal{C}_f)$.
\begin{enumerate}
    \item Suppose the degree of $f$ is odd, and $L\leq 1$. Then $\Gal_f^n(\alpha)$ is a subgroup of $E_n^{2O}$.
    \item Otherwise $\Gal_f^n(\alpha)$ is a subgroup of $F_n^{(m_1,m_2)}$ where
    \[
    (m_1,m_2)=\begin{cases}
    (O+1,1),\quad L=0\text{ and }O\text{ is even;}\\
    (O+2,2),\quad L=0\text{ and }O\text{ is odd;}\\
    (L+2O-1,L-1),\quad L>1\text{ and }d\text{ is odd;}\\
    (L+O,L),\quad \quad L>1\text{, $d$ is odd, and $a_f$ is a perfect square or $O$ are even;}\\
    (L+O+1,L+O),\quad L>1\text{, $d$ is odd and $a_f$ is not a perfect square and $O$ is odd.}
    \end{cases}
    \]
\end{enumerate}
\end{theorem}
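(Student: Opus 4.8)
The plan is to translate the defining sign conditions of $E_n^m$ and $F_n^{(m_1,m_2)}$ into statements about the square classes of the discriminants $\disc(f^m(x)-\alpha)$, read those off from Lemma~\ref{lemma:periodicdiscriminant}, and then propagate them down every subtree by an induction on $n$ that exploits the self-similarity of the dynamics. First I would fix the dictionary between the sign characters and the discriminants. Under the hypotheses on $\alpha$ the arboreal action is faithful, so $\Gal_f^n(\alpha)\hookrightarrow\Aut(T_n)$. The roots of $f^m(x)-\alpha$ are exactly the level-$m$ vertices, and the Vandermonde computation used in the proof of Lemma~\ref{lemma:quadraticextension} shows that every $\sigma\in\Gal_f^n(\alpha)$ sends $\sqrt{\disc(f^m(x)-\alpha)}$ to $\sgn_m(\sigma)\,\sqrt{\disc(f^m(x)-\alpha)}$. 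Hence $\Gal_f^n(\alpha)\subseteq\ker(\sgn_m)$ exactly when $\disc(f^m(x)-\alpha)\in(K^{*})^{2}$, and, since $\sgn^{(m_1,m_2)}=\sgn_{m_1}\sgn_{m_2}$ by Proposition~\ref{proposition:sgn2}, $\Gal_f^n(\alpha)\subseteq\ker(\sgn^{(m_1,m_2)})$ exactly when $\disc(f^{m_1}(x)-\alpha)$ and $\disc(f^{m_2}(x)-\alpha)$ lie in the same square class of $K^{*}$.

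Next I would supply these square-class identities from Lemma~\ref{lemma:periodicdiscriminant}. For part (1), cases $L=0$ and $L=1$ of that lemma give $\disc(f^{2O}(x)-\alpha)\in(K^{*})^{2}$ directly (for $L=1$ one has $K_f^{L-1}(\alpha)=K$ and $L+2O-1=2O$), which is the top-level requirement for $E_n^{2O}$. For part (2) I would compare the potential non-perfect-square parts of $\disc(f^{m_1}(x)-\alpha)$ and $\disc(f^{m_2}(x)-\alpha)$ produced inside the proof of Lemma~\ref{lemma:periodicdiscriminant}: in each listed case the levels $m_1$ and $m_2$ are chosen so that the intervening critical-orbit products $\prod_k\prod_i\bigl(f^k(c_i)-\alpha\bigr)$ pair off into a square (using that the postcritical set is periodic of period $O$), while the residual $\left(\tfrac{d}{4}\right)d$- or $a_f$-prefactors agree after accounting for the parity of the iterate and, when $a_f$ is not a square, after passing to the intermediate field $K_f^{m_2}(\alpha)$. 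This is exactly the bookkeeping that forces the two discriminants into a common square class and hence $\sgn^{(m_1,m_2)}\equiv 1$ on the top group; keeping the parity and the $a_f$ or $\sqrt{d}$ obstructions straight is what determines the precise value of $(m_1,m_2)$ in each branch.

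Finally I would upgrade these top-level statements to the full recursive membership by induction on $n$. The base cases $n<m$ (respectively $n<m_1$) are trivial, since the group is then all of $\Aut(T_n)$. For $n\geq m$ I would cut the tree at level $1$, writing $\Aut(T_n)\cong\Aut(T_{n-1})\wr\Aut(T_1)$. Membership in $E_n^m$ (resp.\ $F_n^{(m_1,m_2)}$) then demands two things: that the top-level sign character vanishes on $\Gal_f^n(\alpha)$, which is the content of the previous paragraph and is independent of $n$; and that every level-$1$ component lands in $E_{n-1}^m$ (resp.\ $F_{n-1}^{(m_1,m_2)}$). For the latter, the roots $\beta_1,\dots,\beta_d$ of $f(x)-\alpha$ are again non-periodic (otherwise $\alpha$ would be periodic), $f$ is still PCF with the same $\mathcal{C}_f$, $L$ and $O$, and these components are precisely the arboreal groups $\Gal_f^{n-1}(\beta_i)$ taken over $K_f^1(\alpha)$; since Lemma~\ref{lemma:periodicdiscriminant} holds over any number field containing the critical orbit, the induction hypothesis applies over $K_f^1(\alpha)$. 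Because $E_{n-1}^m$ and $F_{n-1}^{(m_1,m_2)}$ are normal in $\Aut(T_{n-1})$ and stable under the relabelings identifying the $d$ level-$1$ subtrees (being kernels of the canonically defined sign maps), every component — including the transport components coming from elements that move a child — falls into the subgroup, which closes the induction.

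The main obstacle I anticipate is this last propagation step rather than the discriminant input: one must apply Lemma~\ref{lemma:periodicdiscriminant} uniformly along the whole tower of intermediate fields $K_f^k(\alpha)$ with shifting base points, and check that all wreath components, not merely those fixing a child, lie in the recursive subgroup. The genuinely delicate part of the calculation is the case-by-case determination of $(m_1,m_2)$, where the parity of the iterate and whether $a_f$ (or $\sqrt{d}$) is already a square dictate how deep one must descend before the discriminants coincide modulo squares.
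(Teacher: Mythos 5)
Your proposal is correct and follows essentially the same route as the paper: translate the defining sign conditions of $E_n^m$ and $F_n^{(m_1,m_2)}$ into square-class statements about $\disc(f^m(x)-\alpha)$ supplied by Lemma~\ref{lemma:periodicdiscriminant}, then propagate down the tree by induction on $n$ using the self-similarity of the dynamics with shifted base points in $f^{-1}(\alpha)$. Your explicit dictionary via Proposition~\ref{proposition:sgn2} and your attention to applying the discriminant lemma uniformly over the intermediate fields are the same ingredients the paper uses, just stated more carefully.
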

\begin{proof}
Since $\alpha$ is not periodic, the dynamical tree of $f$ at $\alpha$ is graphically isomorphic to a full $d$-ary tree where $d$ is the degree of $f$.

Let us assume $L\leq 1$ and the degree of $f$ is odd. By Lemma~\ref{lemma:periodicdiscriminant} the $\disc(f^{2O}-\alpha)$ is a perfect square in $K$, so $\Gal_f^{m}(\alpha)$ must be isomorphic to a subgroup of $\Aut(T_m)\cap \ker(\sgn_m)$. For $n> m$ we use induction. Let $f^{-1}(\alpha)=\{y_1,\ldots, y_d\}$. By the induction hypothesis  the dynamical Galois group of $f$ at $y_i$, $\Gal(K_f^{n}(y_i)/K(y_i))$ is isomorphic to a subgroup of $E_n^m$. Clearly $\Gal(K_f^{n+1}(\alpha)/K)$ is isomorphic to a subgroup of $E_n^m\wr \Aut(T_1)$. Using the fact that the discriminant of $f^{m}-\alpha$ is a perfect square in $K$, it shows that any permutation acting on the $n+1$-th level of the tree $T_n$ is even when the permutation restricted to the $m$-th level. Thus $\Gal_f^{n+1}(\alpha)$ is in the kernel of $\sgn_m$.

For the rest of cases Lemma~\ref{lemma:periodicdiscriminant} implies there exists an integer $m_1$ such that $\disc(f^{m_1}-\alpha)$ is a perfect square in $K_f^{m_2}(\alpha)$. Thus $\sigma\in \Gal(K_f^{m_1}(\alpha)/K_f^{m_2}(\alpha))$ is an even permutation. To see it more clear, let $f^{-m_1}(\alpha)=\{y_i\mid i\in I(T_{m_1})\}$, the Galois group $\Gal(K_f^{m_1}(\alpha)/K_f^{m_2}(\alpha))$ embeds into a subgroup of the 
\[
\prod_{i\in I(T_{m_1})}\Gal(K_f^{m_1-m_2}(y_i)/K_f^{m_2}(\alpha)),
\]
The $\Gal(K_f^{m_2}/K)$ acts on the base point $y_i$, so $\Gal(K_f^{m_1}(\alpha)/K)$ can be embedded, denote the map by $\tau$, into $\Aut(T_{m_1-m_2})\wr\Aut\Aut(T_{m_2})$. Since $\sigma$ is an even permutation, we have $\tau(\sigma)\in \ker(\sgn^{(m_1,m_2)})$. Similar to the proof of periodic cases, we can use induction to show that $\Gal_f^n(\alpha)$ can be embedded into $F_n^{(m_1,m_2)}$. 
\end{proof}

\subsection{The rank of $E_n^2$}\label{sec:the rank of E_n}
Let $S$ be a subset of a group $G$. If $G=\langle S\rangle$, we call $S$ a generating set of $G$. It is a long-term goal to determine the cardinality of minimal generating set of $G$. Let
\[
d(G)=\min\{\# S\mid G=\la S\ra\}.
\]
Here we recall an important theorem regarding $d(G)$ by Dalla Volta and Lucchini (see~\cite{DallaVoltaLucchini}).
\begin{lemma}\label{lemma:boundedrank}
If a finite non-cyclic group $G$ contains a unique minimal normal
subgroup $M$, then we have
\[
d(G)\leq\max\{2,d(G/M)\}.
\]
\end{lemma}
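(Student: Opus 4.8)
The reverse inequality $d(G)\ge\max\{2,d(G/M)\}$ is free: non-cyclicity gives $d(G)\ge 2$, and the surjection $G\to G/M$ gives $d(G)\ge d(G/M)$. So the entire content is the upper bound, and I set $r:=\max\{2,d(G/M)\}$ and aim to produce a generating $r$-tuple of $G$. The base case $G=M$ (i.e.\ $d(G/M)=0$) forces $G$ to be simple (a group whose unique minimal normal subgroup is the whole group has no proper nontrivial normal subgroup), so there it reduces to the two-generation of finite simple groups; I may therefore assume $M\ne G$.

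The main tool is Gaschütz's lemma in its counting form: for $N\trianglelefteq G$ the number
\[
q=\#\{\text{lifts of }\bar x\text{ to a generating }r\text{-tuple of }G\}
\]
of a generating $r$-tuple $\bar x$ of $G/N$ is the same for every such $\bar x$, so that $\Phi_G(r)=\Phi_{G/N}(r)\cdot q$, where $\Phi$ counts generating $r$-tuples. Taking $N=M$ and using $\Phi_{G/M}(r)>0$ (valid since $d(G/M)\le r$), the problem collapses to showing $q>0$: fixing one lift $(x_1,\dots,x_r)$ with $\langle x_1,\dots,x_r\rangle M=G$, I must exhibit $(m_1,\dots,m_r)\in M^{r}$ with $\langle x_1m_1,\dots,x_rm_r\rangle=G$, equivalently show that not every choice of $(m_i)$ lands inside some maximal subgroup $U<G$ with $UM=G$.

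The decisive use of the uniqueness hypothesis is that every such maximal supplement $U$ is built on the single chief factor $M$: Möbius inversion over the supplements of $M$ expresses $q$ through one irreducible datum rather than several independent ones, and uniqueness also pins down $C_G(M)$ (in the nonabelian case $C_G(M)=1$, so $G\hookrightarrow\Aut(M)$ and $G$ permutes the simple factors of $M$ transitively). I would then split into two cases. If $M$ is abelian, then $M\cong\F_p^{k}$ is an irreducible $\F_p[G/M]$-module, every proper supplement is a genuine complement (since $U\cap M\trianglelefteq G$ forces $U\cap M=1$ by minimality), complements are controlled up to conjugacy by $H^1(G/M,M)$, and a direct count shows the bad $r$-tuples number strictly fewer than $|M|^{r}$ precisely when $r\ge 2$. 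This is exactly where non-cyclicity is spent.

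The main obstacle is the nonabelian case $M\cong S^{k}$ with $S$ nonabelian simple. Here the clean argument breaks, because a supplement need not meet $M$ trivially---for instance in $S\wr C_2$ the diagonal supplement intersects the socle in a diagonal copy of $S$---so one cannot reduce to complements and cohomology. Instead I would invoke the description of the supplements to a nonabelian minimal normal subgroup (the lattice of full diagonal subgroups across the factors) together with the two-generation of $S$ and the smallness of $\mathrm{Out}(S)$, and estimate the number of bad lifts. The technical crux, and the step I expect to be genuinely hard, is verifying that this estimate still leaves $q>0$ for $r\ge 2$, i.e.\ that the supplements cannot exhaust $M^{r}$, so that the threshold is exactly $\max\{2,d(G/M)\}$ and not larger; by comparison the Gaschütz reduction and the abelian case are routine.
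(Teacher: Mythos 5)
The paper offers no proof of this lemma to compare against: it is quoted as a known theorem of Dalla Volta and Lucchini, with the citation standing in for the argument. So the only question is whether your proposal is complete on its own terms, and it is not. You have correctly identified the standard machinery --- Gasch\"utz's lemma reduces everything to showing that one fixed generating $r$-tuple of $G/M$ admits at least one lift generating $G$, and the analysis splits according to whether the unique minimal normal subgroup $M$ is abelian or a power of a nonabelian simple group --- but in the nonabelian case you explicitly stop at ``the step I expect to be genuinely hard'' and never carry out the estimate. That estimate \emph{is} the theorem: counting the lifts swallowed by supplements of $M\cong S^k$, where supplements can meet the socle in products of full diagonal subgroups rather than trivially, is precisely the content of the Lucchini/Dalla Volta--Lucchini papers, and it rests on nontrivial inputs about almost simple groups and on cohomological bounds. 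A proof whose hardest case reads ``I would invoke \dots and estimate'' is a plan, not a proof.

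Two further points. In the abelian case, your assertion that ``a direct count shows the bad $r$-tuples number strictly fewer than $|M|^r$'' conceals a real input: each maximal supplement is a complement contributing exactly one bad lift, the complements number $|Z^1(G/M,M)|=|B^1|\cdot|H^1|$ with $|B^1|=|M|$ when the action is nontrivial, so beating $|M|^2$ requires $|H^1(G/M,M)|<|M|$ for a faithful irreducible module --- the Aschbacher--Guralnick bound, not an elementary count. (The case where $M$ is central is harmless only because uniqueness of $M$ then forces $M\leq\Phi(G)$, as otherwise $G$ would split as $M\times U$ and be cyclic; this deserves a sentence.) Your base case and the reverse inequality are fine. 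In short: right strategy, and the one the cited reference follows, but the proposal has genuine gaps exactly where the theorem is difficult, so it does not constitute a proof.
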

An easy consequence of this lemma is that if there is a tower of normal subgroup, say
\[
\{1\}\triangleleft N_1\triangleleft N_2\triangleleft\cdots\triangleleft N_k\triangleleft G \label{eq:towerofnormalsubgroup} \tag{\P}
\]
such that for $N\triangleleft G$ we have either $N\triangleleft N_k$ or $N=N_i$ for some $i$. Then $d(G)\leq \max\{2,d(G/N_i)\}$ for all $i$.

In the following we want to show that $E_n^m$ has a tower of normal subgroups satisfying Condition~\ref{Condition}. We will argue a proper normal subgroup $N\triangleleft G$ is the unique one by showing:
\begin{enumerate}
    \item For any proper normal subgroup $N'$ of $G$ we have $N\cap N'\neq\emptyset$.
    \item $N$ is a smallest normal subgroup.
\end{enumerate}
\begin{lemma}\label{lemma:scsneqc}
For a nontrivial element $\sigma\in E_n^m$ there exists an element $\mathbf{c}\in \ker(\res_{n-1}:E_{n}^m\to E_{n-1}^m)$ such that 
\[
\sigma \mathbf{c}\sigma^{-1}\mathbf{c}^{-1}\neq 1.
\]
For a nontrivial $\mathbf{c}\in \ker(\res_{n-1}:E_{n}^m\to E_{n-1}^m)$ there exists an element $\sigma\in E_n^m$ such that 
\[
\sigma \mathbf{c}\sigma^{-1}\mathbf{c}^{-1}\in M\setminus\{1\}.
\]
\end{lemma}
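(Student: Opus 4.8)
The plan is to first make the subgroup $\ker(\res_{n-1}\colon E_n^m\to E_{n-1}^m)$ completely explicit. Writing $\Aut(T_n)\cong\Aut(T_1)\wr_{I(T_{n-1})}\Aut(T_{n-1})$ with $\res_{n-1}$ the projection onto the top factor, every $\mathbf{c}\in\ker(\res_{n-1})$ has the form $\mathbf{c}=((\pi_v)_v;1)$ with $\pi_v\in S_d$ indexed by the level-$(n-1)$ vertices $v$. Unwinding the recursive intersections with $\ker(\sgn_m)$ and using Proposition~\ref{prop:sgn} together with the fact that $\res_m$ of such a bottom-level element is trivial unless it already sees level $n$, one checks that the only surviving constraints are $\prod_{v\preceq w}\sgn(\pi_v)=1$, one for each level-$(n-m)$ vertex $w$. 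Here $M$ is the bottom-level even layer, so $\mathbf{c}\in M$ exactly when every $\pi_v\in A_d$; note $M\subseteq\ker(\res_{n-1})\subseteq E_n^m$. The engine of the whole proof is the conjugation formula: writing $\sigma=((a_i)_i;\bar\sigma)$ with $\bar\sigma=\res_{n-1}(\sigma)$, a direct computation shows $[\sigma,\mathbf{c}]$ again lies in $\ker(\res_{n-1})$, with $i$-th fiber $a_{j}\pi_{j}a_{j}^{-1}\pi_i^{-1}$ where $j=\bar\sigma^{-1}(i)$; in particular its sign at fiber $i$ is $\sgn(\pi_{j})\sgn(\pi_i)$.

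For the first assertion I would split on whether $\bar\sigma$ is trivial. If $\sigma$ moves some level-$(n-1)$ vertex $v_0$ to $v_1\neq v_0$, take $\mathbf{c}$ to be a single $3$-cycle placed at $v_0$ and the identity elsewhere; this lies in $M$ since all block-parities are trivially $+1$, and because conjugation transports its support from $\{v_0\}$ to $\{v_1\}$, the supports of $\mathbf{c}$ and $\sigma\mathbf{c}\sigma^{-1}$ are disjoint and nontrivial, forcing $[\sigma,\mathbf{c}]\neq1$. If instead $\res_{n-1}(\sigma)=1$ but $\sigma\neq1$, then $\sigma=((s_v)_v;1)$ with some $s_{v_0}\neq1$, and I must produce $\pi_{v_0}$ not commuting with $s_{v_0}$ while respecting the block-parities: whenever $A_d\not\subseteq C_{S_d}(s_{v_0})$ an even such $\pi_{v_0}$ exists and may stand alone, and in the one remaining case ($d=3$ with $s_{v_0}\in A_3$) I pair a transposition at $v_0$ with a second transposition at another vertex of the same block, which is available as soon as $m\geq2$.

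For the second assertion the aim is to steer the commutator into $M$, that is, to make every fiber even. The clean choice is $\sigma=((a_i)_i;1)$ a bottom-level element, so that the conjugation formula collapses to fiberwise commutators $[a_i,\pi_i]\in A_d$; these automatically lie in $M$, and only one of them need be nontrivial. Given $\pi_{v_0}\neq1$, such an $a_{v_0}\in A_d$ exists whenever $A_d\not\subseteq C_{S_d}(\pi_{v_0})$, which holds for all $d\geq5$ and for $d=3$ precisely when $\pi_{v_0}$ is odd. In the leftover case ($d=3$, $\pi_{v_0}$ a $3$-cycle) I instead invoke Lemma~\ref{lemma:transitive} to choose $\sigma\in E_n^m$ swapping the subtree of $v_0$ with that of a suitable $v_1$; the resulting commutator has each fiber a product of $3$-cycles, hence even, so it lies in $M$, and the leaf-bijection of the swap can be arranged to keep it nontrivial.

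The main obstacle is exactly the parity bookkeeping against the constraints $\prod_{v\preceq w}\sgn(\pi_v)=1$ combined with the degenerate small-group behaviour at $d=3$, where \emph{an even permutation failing to commute with a prescribed one} may simply not exist. The remedy is to trade a single permutation for a parity-balanced pair inside one block, or to replace a fiber-conjugation by a vertex-transposition supplied by Lemma~\ref{lemma:transitive}; the transitivity of $E_n^m$ is what guarantees enough such elements live inside the group. I expect the only genuinely excluded situation to be $d=3$ with $m=1$, which lies outside the range $E_n^2$ relevant to the rank computation.
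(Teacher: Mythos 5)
Your proposal is correct and follows essentially the same strategy as the paper's proof: describe $\ker(\res_{n-1})$ explicitly as bottom-level tuples of permutations subject to block-parity constraints, use the wreath-product conjugation formula, split into cases according to the shape of $\sigma$ (resp.\ $\mathbf{c}$), and fall back on Lemma~\ref{lemma:transitive} and parity-balanced pairs of transpositions when $d=3$ obstructs the generic choice. Two of your refinements are genuine improvements on the paper's write-up. First, in the first assertion you split on whether $\res_{n-1}(\sigma)$ is trivial rather than on whether the bottom-level part $\mathbf{a}$ is trivial; the paper's second case (``back to the case that $\mathbf{a}$ is the identity'') tacitly assumes $\res_{n-1}(\sigma)\neq 1$ and so does not actually cover a nontrivial purely bottom-level $\sigma$, which your second case treats directly. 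Second, your remark that the statement genuinely fails for $d=3$, $m=1$ (where $\ker(\res_{n-1})$ has all fibers in the abelian group $A_3$) is a real restriction on the lemma as stated that the paper never records; it is harmless because only $E_n^2$ is used downstream. The one place you are thinner than you should be is the final sub-case of the second assertion: when $d=3$ and every fiber of $\mathbf{c}$ is the same $3$-cycle, a bare subtree swap yields a trivial commutator, so the swap must carry odd fiber components with their parities balanced inside one block (or one should revert to a pair of non-commuting transpositions within a single block, which is what the paper does in its ``all $c_i$ equal'' case); your phrase ``can be arranged'' is doing real work there and deserves an explicit sentence.
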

\begin{proof}
Note that for any $\sigma=((a_i)_{i\in I};b)\in E_n^m$, let $\mathbf{a}=((a_i)_{i\in I};1)$ and $\mathbf{b}=((1)_{i\in I}, b)$, $\sigma=\mathbf{b}\mathbf{a}$.

Given $\sigma\in E_n^m$, we can write $\sigma\mathbf{c}\sigma^{-1}$ as
\[
\mathbf{b}\mathbf{a}\mathbf{c}\mathbf{a}^{-1}\mathbf{b}^{-1}.
\]
If $\mathbf{a}$ is identity, we only need to choose $\mathbf{c}$ such that $c_i\neq c_j$ where $\mathbf{b}^{-1}(i)=j$. If $\mathbf{a}$ is not identity, then for $d\geq 5$ we can choose $c_i$ and $c_j$ that is products of different length of disjoint cycles, i.e. a product of two disjoint $2$-cycles and a 3-cycle, note that this choice makes $\mathbf{c}\in \ker(\res_{n-1})$. For $d=3$ we may choose $c_1$ be a 3-cycle, $c_2$ and $c_3$ a $2$-cycle. Then $a_ic_ia_i^{-1}\neq a_jc_ja_j^{-1}$ by how we choose the permutation. Then $\mathbf{b}(\mathbf{a}\mathbf{c}\mathbf{a}^{-1})\mathbf{b}$ is back to the case that we assume $\mathbf{a}$ is identity. Hence the result follows.  

Given $\mathbf{c}=((c_i);1)\in \ker(\res_{n-1}:E_n^m\to E_{n-1}^m)$ and for $\sigma\in E_n^m$, we can write $\sigma\mathbf{c}\sigma^{-1}$ as
\[
\mathbf{b}\mathbf{a}\mathbf{c}\mathbf{a}^{-1}\mathbf{b}^{-1}.
\]
If $c_i=c_j$ for all $i$ and $j$, then we may take two transpose $a_1$ and $a_2$ that are not disjoint from $c_1$ and $c_2$. By taking $\mathbf{b}$ to be the identity, we again show the desired result. 

Now for some distinct indices $i$ and $j$ we have $c_i\neq c_j$, then by Lemma~\ref{lemma:transitive} we have $\mathbf{b}$ transpose $i$ and $j$. For both $c_i$ and $c_j$ even or odd we take $\mathbf{a}$ to be the identity, and it implies $\sigma\mathbf{c}\sigma^{-1}\mathbf{c}\in M\setminus\{1\}$. Now without lose of generality we may assume $c_i$ is an even permutation and $c_j$ is an odd permutation. Since $\mathbf{c}\in E_n^n$, we must has an other odd permutation $c_{j'}$ with $j\neq j'$. If $c_j=c_{j'}$, we may process a conjugation on $c_{j'}$ by some even permutation such that $c_j\neq c_{j'}$. Thus we may assume $c_j\neq c_{j'}$. By Lemma~\ref{lemma:transitive} we have $\mathbf{b}$ transposing $j$ and $j'$. Following the above processes, we find a $\sigma$ such that the desired result holds.
\end{proof}
\begin{proposition}\label{propostion:inverseautomorphismisbyconjugate}
Let $\sigma\in [S_d]^n$. There exists an $\tau\in [S_d]^n$ such that $\sigma^\tau=\sigma^{-1}$.
\end{proposition}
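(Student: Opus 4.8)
The plan is to prove the more general statement that every iterated wreath product $[S_d]^n=\Aut(T_n)$ is \emph{ambivalent}, i.e.\ every element is conjugate to its own inverse, by induction on $n$ using the decomposition $\Aut(T_n)\cong \Aut(T_{n-1})\wr_{I(T_1)}\Aut(T_1)$. Note that no parity hypothesis on $d$ is needed, so the argument must avoid using oddness. The base case $n=1$ is the classical fact that a permutation and its inverse have the same cycle type and are therefore conjugate in $S_d=\Aut(T_1)$. For the inductive step I would set $H:=\Aut(T_{n-1})$, assume $H$ is ambivalent, and prove that $H\wr S_d$ is ambivalent. Since the top group $S_d$ is itself ambivalent, this is an instance of the general principle that a wreath product of an ambivalent group over $S_d$ is again ambivalent.

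To carry out the step, write $\sigma=((a_i)_{i\in I};b)$ with $a_i\in H$ and $b\in S_d$, and search for $\tau=((t_i)_{i\in I};s)$ with $\sigma^\tau=\sigma^{-1}$; using the decomposition $\sigma=\mathbf{b}\mathbf{a}$ from Lemma~\ref{lemma:scsneqc} keeps the computation organized. Applying $\res$ and comparing the two coordinates of $\tau\sigma\tau^{-1}$ with those of $\sigma^{-1}$ yields two requirements: first the top-level condition $sbs^{-1}=b^{-1}$ in $S_d$, and second a system of equations in the base coordinates $t_i\in H$. The first is satisfied by any $s$ conjugating $b$ to $b^{-1}$; I would choose $s$ to preserve the support of each disjoint cycle of $b$ (reversing each cycle), so that $s^{-1}i$, $bi$ and $b^{-1}i$ all lie in the same cycle-support as $i$. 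This choice \emph{decouples} the base system cycle by cycle, reducing the problem to one cycle of $b$ at a time.

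It then suffices to solve the base system along a single $\ell$-cycle. There the equations form a cyclic recursion, expressing $t_{bi}$ in terms of $t_i$ and the relevant $a$'s, so that fixing one value $t_{i_0}$ forces all the others and going once around the cycle produces a single consistency condition. The crux is that this condition is exactly the requirement that the \emph{cycle product} $w=a_{b^{\ell-1}i_0}\cdots a_{bi_0}a_{i_0}\in H$ (the base part of $\sigma^{\ell}$ restricted to the cycle) be conjugate in $H$ to its inverse: the remaining freedom in choosing $t_{i_0}$ is precisely the freedom to select such a conjugator, after which the recursion propagates consistently. This is where the induction hypothesis that $H=\Aut(T_{n-1})$ is ambivalent is used, and it is the main obstacle of the proof — the rest is index bookkeeping. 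Assembling the per-cycle conjugators into a single $\tau$ then completes the inductive step and hence the proposition.

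As a cleaner but less self-contained alternative, I would note the standard classification of conjugacy classes in $H\wr S_d$: a class is determined by the cycle type of the top permutation $b$ together with, for each cycle, the $H$-conjugacy class of its cycle product. Passing from $\sigma$ to $\sigma^{-1}$ preserves the cycle type of $b$ and replaces each cycle product by (a conjugate of) its inverse; by ambivalence of $H$ these inverses lie in the same $H$-classes, so $\sigma$ and $\sigma^{-1}$ share all class invariants and are conjugate. This gives the same result and makes transparent why ambivalence propagates up the tower $\Aut(T_1)\subseteq\Aut(T_2)\subseteq\cdots$.
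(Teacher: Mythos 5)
Your proof is correct, and although it follows the same outer induction on $n$ as the paper, the inductive step is genuinely different --- and it is the more careful one. The paper writes $\sigma=((\sigma_i)_i;\sigma_0)$, picks $\tau_0$ with $\sigma_0^{\tau_0}=\sigma_0^{-1}$ and, for each $i$ \emph{separately}, an element $\tau_i$ with $\sigma_i^{\tau_i}=\sigma_i^{-1}$, and then asserts that the product of $((\tau_{\tau_0^{-1}(i)})_i;1)$ and $(1;\tau_0)$ conjugates $\sigma$ to $\sigma^{-1}$. But conjugation by an element of the base group permutes the base coordinates according to the top permutation, so the equations the $\tau_i$ must satisfy are coupled along each cycle of the top part; choosing each $\tau_i$ independently does not solve them. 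Concretely, for $d=3$ and $n=2$ take $\sigma=(((12),(13),1);(12))$: every $\sigma_i$ is an involution and $(12)$ is its own inverse, so one may legitimately take $\tau_0=1$ and all $\tau_i=1$, making the paper's conjugator the identity, yet $\sigma^{-1}=(((13),(12),1);(12))\neq\sigma$. The correct solvability condition around an $\ell$-cycle of the top permutation is exactly what you isolate: the cycle product $a_{b^{\ell-1}(i_0)}\cdots a_{b(i_0)}a_{i_0}$ must be conjugate to its inverse in the base group, which is what the induction hypothesis (ambivalence of $\Aut(T_{n-1})$, or of $S_d$ in the paper's choice of decomposition) supplies; fixing one $t_{i_0}$ as such a conjugator and propagating around the cycle then determines the rest. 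Your alternative argument via the classification of conjugacy classes of $H\wr S_d$ by the cycle type of $b$ together with the $H$-classes of the cycle products reaches the same conclusion immediately. In short, the proposition is true and your write-up supplies precisely the step that the paper's displayed conjugator gets wrong.
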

\begin{proof}
We show the statement by induction. For $n=1$ the statement trivially follows from the fact that $\Aut(S_d)\cong S_d$. 

For the sake of induction argument we assume the statement is true for a positive integer $N$. Given an element $\sigma\in \Aut(T_{N+1})\cong\Aut(T_1)\wr\Aut(T_{N})$ we may write $\sigma$ as
\[
\sigma=((\sigma_i)_{i\in I(T_N)},\sigma_0)
\]
for $\sigma_i\in S_d$. Thus for each $i$ we have $\tau_i$ such that $\sigma_i^{\tau_i}=\sigma_i^{-1}$ by the fact that $\Aut(S_d)\cong S_d$ and the induction hypothesis. Now we observe
\[
((\tau_{\tau_0^{-1}(i)})_{i\in I(T_d)};1)(1;\tau_0)((\sigma_i)_{i\in I(T_d)};\sigma_0)(1;\tau_0)^{-1}((\tau_{\tau_0^{-1}(i)})_{i\in I(T_d)};1)^{-1}
\]
is the inverse of $\sigma$.
\end{proof}
\begin{lemma}\label{lemma:elementinres}
Suppose one of the followings holds:
\begin{enumerate}
    \item Let $G$ be a group. Assume that, for any $g\in G$, there exists $h$ such that $g^h\neq g^{-1}$.
    \item Let $\res_n:G\wr\Aut(T_n)\to \Aut(T_n)$. Assume $H\subseteq \ker(\res_n)$ is a normal subgroup of $G\wr\Aut(T_n)$, and for any $h\in H$ there exists $\sigma\in\Aut(T_n)$ such that $\sigma(h)\neq h^{-1}$.
\end{enumerate}
 Then any proper normal subgroup of $G\wr\Aut(T_n)$ or $H\ltimes\Aut(T_n)$ respectively contains a nontrivial element in $\{((g_1,\ldots,g_n);1)\mid g_i\in G\}$
\end{lemma}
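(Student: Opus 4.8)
The plan is to prove the equivalent statement that every nontrivial normal subgroup $N$ meets the base group nontrivially. Write $B=\ker(\res_n)$, which is $G^{d^n}$ in case (1) and $H$ in case (2); the displayed set $\{((g_1,\ldots);1)\}$ is exactly $B$, so ``$N$ contains a nontrivial base element'' is literally ``$N\cap B\neq 1$''. First I would fix $1\neq x\in N$ and write $x=((g_i)_{i\in I};a)$ with $a=\res_n(x)$. If $a=1$ then $x\in N\cap B$ already, so the entire difficulty lies in the case $a\neq 1$, where a base element must be manufactured out of $x$.

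Next I would record two ways of producing elements of $N\cap B$ from $x$. The cheap one is the commutator $[x,b]=xbx^{-1}b^{-1}$ with $b\in B$: since $\res_n[x,b]=1$ it lands in $B$, and since $[x,b]=x\cdot(bx^{-1}b^{-1})$ is a product of two elements of the normal subgroup $N$ it lands in $N$; a direct computation gives
\[
[x,b]=\bigl((g_i\,h_{a^{-1}(i)}\,g_i^{-1}\,h_i^{-1})_{i\in I};1\bigr),\qquad b=((h_i)_{i\in I};1).
\]
The second construction, which is exactly why Proposition~\ref{propostion:inverseautomorphismisbyconjugate} is proved just beforehand, uses an element $\sigma\in\Aut(T_n)$ with $\sigma a\sigma^{-1}=a^{-1}$: then $\res_n(x\cdot\sigma x\sigma^{-1})=a\cdot a^{-1}=1$, so $x\cdot\sigma x\sigma^{-1}\in N\cap B$ by normality, and one computes
\[
x\cdot\sigma x\sigma^{-1}=\bigl((g_i\,g_{\rho(i)})_{i\in I};1\bigr),\qquad \rho=\sigma^{-1}a^{-1}.
\]
Everything now reduces to choosing $b$, respectively $\sigma$, so that the resulting base element is nonidentity.

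In case (1) the base is all of $G^{d^n}$, so I would take $b$ supported on the single coordinate $j_0=a^{-1}(i_0)\neq i_0$ (such an $i_0$ exists because $a\neq 1$); the $i_0$-entry of $[x,b]$ is then the conjugate $g_{i_0}h_{j_0}g_{i_0}^{-1}$ of a nontrivial element and hence survives. Structurally this is the statement that, when $a\neq1$, $x$ cannot centralize $G^{d^n}$: if instead $N\cap B=1$, then $[N,B]\subseteq N\cap B=1$, so $N\subseteq C_{\mathcal{G}}(B)$, and the coordinatewise identities above force $C_{\mathcal{G}}(B)\subseteq B$, whence $N=N\cap B=1$, a contradiction. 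Hypothesis (1), that no nontrivial $g\in G$ has $g^{h}=g^{-1}$ for all $h$, is the input ensuring the relevant entry is not accidentally inverted away.

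Case (2) is the heart of the matter, since $b$ must now be chosen inside $H$ and single coordinates are unavailable. The two constructions fail precisely when (i) $x$ centralizes $H$, killing every $[x,b]$, and (ii) the base part $\mathbf{g}=((g_i);1)$ is \emph{inverted} by $\rho$, i.e. $g_{\rho(i)}=g_i^{-1}$ for all $i$, forcing $x\cdot\sigma x\sigma^{-1}=1$. Hypothesis (2) is designed exactly for this degeneracy: it forbids any nontrivial element of $H$ from being inverted by \emph{every} automorphism, so that after replacing $x$ by a conjugate $\tau x\tau^{-1}\in N$ (which sends $\mathbf{g}\mapsto\tau\cdot\mathbf{g}$ and $a\mapsto\tau a\tau^{-1}$) one of the two constructions must return a nontrivial element of $N\cap H$. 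The hard part, and where I expect to spend the most care, is precisely this matching: the product construction only controls inversion by the specific permutation $\rho=\sigma^{-1}a^{-1}$ attached to an inverting $\sigma$ — and $\sigma$ only ranges over a coset of $C_{\Aut(T_n)}(a)$ — whereas the hypothesis supplies a possibly unrelated witnessing automorphism. Reconciling these, and thereby excluding a central-type complement, is what makes the hypothesis indispensable: the diagonal copy of $C_2$ in $C_2\wr\Aut(T_n)$, which is self-inverse and fixed by every $\sigma$, shows the conclusion fails the moment hypothesis (2) is dropped.
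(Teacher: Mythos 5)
Your overall strategy coincides with the paper's: decompose $x=\mathbf{s}\mathbf{g}$, invoke Proposition~\ref{propostion:inverseautomorphismisbyconjugate} to produce $\mathbf{t}$ with $\mathbf{t}\mathbf{s}\mathbf{t}^{-1}=\mathbf{s}^{-1}$, observe that $x^{\mathbf{t}}x$ (your $x\cdot\sigma x\sigma^{-1}$) lands in $N\cap\ker(\res_n)$, and use the hypothesis to force nontriviality. Your treatment of case (1) is in fact tighter than the paper's: the commutator $[x,b]$ with $b$ supported on a single coordinate moved by $a$ settles that case outright, with no appeal to the hypothesis at all, whereas the paper reaches the same conclusion by perturbing the entries $g_i$ by conjugation.

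The gap is in case (2), and you have named it yourself without closing it. If $N\cap H=1$ then $N$ centralizes $H$, and the product construction only tells you that $\rho(h)=h^{-1}$ for every $\rho$ in the particular coset of $C_{\Aut(T_n)}(a)$ realized as $\rho=a\sigma$ with $\sigma a\sigma^{-1}=a^{-1}$ (while $\rho(h)=h$ on $C_{\Aut(T_n)}(a)$ itself); hypothesis (2) supplies a witness $\rho$ with $\rho(h)\neq h^{-1}$ that has no a priori reason to lie in that coset, and for $\tau$ with $a^{\tau}\notin\{a,a^{-1}\}$ the product $x\cdot\tau x\tau^{-1}$ does not even land in the base, so it yields no further constraint. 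Your proposal stops at ``reconciling these is the hard part'' --- but that reconciliation \emph{is} the content of case (2), so as written the argument is incomplete. To be fair, the paper's own proof is no better here: it dismisses case (2) with the single sentence that the assumption lets one construct a nontrivial element of $\ker(\res_n)$ ``via a similar manner.'' So you have reconstructed the paper's argument to the same level of detail and, unlike the paper, honestly flagged where the missing work lies; your closing example of the diagonal $C_2$ correctly shows the hypothesis cannot be dropped, but it does not substitute for deriving a contradiction when the hypothesis does hold.
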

\begin{proof}
For any element $\tau\in N\triangleleft G\wr\Aut(T_n)$ there exist $\mathbf{g}=((g_i)_{i\in I(T_n)};1)$ and $\mathbf{s}=(1;\sigma)$ for $g_i\in G$ and $\sigma\in \Aut(T_n)$ such that $\tau=\mathbf{s}\mathbf{g}$. By Proposition~\ref{propostion:inverseautomorphismisbyconjugate} there exists $\mathbf{t}\in G\wr\Aut(T_n)$ such that $\mathbf{t}\mathbf{s}\mathbf{t}^{-1}=\mathbf{s}^{-1}$. Thus we have
\[
\tau^{\mathbf{t}}\tau=((\ast);1).
\]
For the first statement one can conjugate some $g_i$ by a proper $h$ such that $g_i^h\neq g_i^{-1}$. Thus we may conjugate $\tau$ by some proper element $\mathbf{h}\in \ker(\res_n)$ such that $\tau^{\mathbf{t}}\tau^{\mathbf{h}}\neq 1$.

For the second argument since $\Aut(T_n)$ acts on the index of $G^{d^n}$ and $H$ is normal, the natural map $\phi:\Aut(T_n)\to \Aut(H)$ is an homomorphism which implies $H\ltimes\Aut(T_n)$ is well-defined. By the assumption we can again construct an nontrivial element in $\ker(\res_n)$ via a similar manner.
\end{proof}
Let $e_i\in C_2^{d^{n-1}}$ be an element of the form $(1,\ldots,-1,\ldots,1)$ where $-1$ only appears on the $i$-th coordinate. Let $k_i=d^{i-1}+1$. We define
\[
X_i=\{(e_1+e_{k_i};1)^{\sigma}\mid \sigma\in \Aut(T_n)\}\subseteq C_2^{d^n}\ltimes \Aut(T_{n})
\]
for $i=1,2,\ldots, n$. We have the following important lemma.
\begin{lemma}\label{lemma:C2AutTn}
Let $H_i=\la X_i\ra\ltimes\Aut(T_n)$, and let $\res_n:H\to\Aut(T_n)$. Then $\la X_1\ra$ is the unique minimal normal subgroup of $H_i$. In particular, let $H=\la X_1\ra\ltimes \Aut(T_1)$, we have $\la X_1\ra=\ker(\res_1)$.
\end{lemma}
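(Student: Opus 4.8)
The plan is to prove the two assertions flagged just before the statement: that $M:=\la X_1\ra$ is a \emph{minimal} normal subgroup of $H_1=\la X_1\ra\ltimes\Aut(T_n)$, and that it is met nontrivially by \emph{every} nontrivial normal subgroup, so that it is the unique minimal normal subgroup. First I would record that $M$ is normal in $H_1$: it is generated by a single $\Aut(T_n)$-conjugacy orbit, hence $\Aut(T_n)$-invariant, and it lies inside the abelian group $C_2^{d^n}$, so it is also invariant under $\la X_1\ra$; thus $M\trianglelefteq H_1$. I would also note once and for all that, under the regular labeling, the orbit $X_1$ consists exactly of the sibling pairs $e_a+e_b$ (two leaves sharing a bottom vertex), so $M=\bigoplus_B\mathrm{Std}_B$, the direct sum over the $d^{n-1}$ bottom blocks $B$ of the sum-zero (standard) module of the $S_d$ acting on that block. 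Two ingredients remain: (A) minimality of $M$, and (B) that every nontrivial normal subgroup of $H_1$ contains $M$.

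For (B) I would invoke Lemma~\ref{lemma:elementinres}(2), applied with base group $C_2$ and $H=M\subseteq\ker(\res_n)$. Its hypothesis asks that every nontrivial $h\in M$ satisfy $\sigma(h)\neq h^{-1}=h$ for some $\sigma\in\Aut(T_n)$, i.e. that $M$ contain no nonzero vector fixed by every coordinate permutation. Since the action is transitive on leaves, the only fixed vectors in $C_2^{d^n}$ are $0$ and the all-ones vector $\mathbf 1$; and because $d$ is odd, $\mathbf 1$ has odd weight on each bottom block and so does not lie in $M$ (whose members are even on each block). Hence the hypothesis holds, and the lemma gives that any proper normal subgroup of $H_1$ contains a nontrivial element of the base $M$; together with the trivial case $N=H_1$, every nontrivial normal subgroup $N$ satisfies $N\cap M\neq 1$.

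For (A) I would show that $M$, viewed as an $\F_2[\Aut(T_n)]$-module, is irreducible. Since $d$ is odd we have $2\nmid d$, so $\mathrm{Std}_B$ is irreducible over $\F_2$ for each block. Given a nonzero invariant submodule $W\subseteq M$, I pick $w\in W$ with nonzero component in some block $B_0$ and use the bottom-level base subgroup $S_d^{\,d^{n-1}}$ of $\Aut(T_n)$, which acts as $S_d$ on the coordinates of $B_0$ while fixing all other coordinates: the elements $w+\sigma w$ (with $\sigma$ in this $S_d$) lie in $W$, are supported on $B_0$, and span $\mathrm{Std}_{B_0}$ by irreducibility, since the augmentation submodule of a nontrivial irreducible equals the whole module. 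Thus $W\supseteq\mathrm{Std}_{B_0}$, and by transitivity of $\Aut(T_n)$ on the bottom blocks (cf. Lemma~\ref{lemma:transitive}) we get $W\supseteq\mathrm{Std}_B$ for every $B$, i.e. $W=M$. Combining (A) and (B): for nontrivial $N\trianglelefteq H_1$, the group $N\cap M$ is a nonzero $\Aut(T_n)$-invariant submodule, hence all of $M$, so $M\subseteq N$; this shows $M$ lies in every nontrivial normal subgroup and is itself minimal, i.e. it is the unique minimal normal subgroup. The ``in particular'' clause is then immediate: in the level-one model $H=\la X_1\ra\ltimes\Aut(T_1)$ the map $\res_1$ is the projection onto the $\Aut(T_1)$-factor, whose kernel is the base $\la X_1\ra$ by the definition of the semidirect product.

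I expect the main obstacle to be step (A), the module-theoretic irreducibility. The two delicate points are the reduction to a single block, which genuinely relies on the bottom-level permutations forming a direct factor acting independently on each block, and the appeal to the characteristic-$2$ irreducibility of the standard $S_d$-module, which is exactly where the hypothesis that $d$ is odd enters. The same parity of $d$ is what keeps $\mathbf 1$ out of $M$ in step (B), so the oddness of $d$ underlies both halves of the argument.
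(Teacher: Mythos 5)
Your argument for the case $i=1$ is correct and reaches the paper's conclusion by a genuinely different route. The paper works by hand: it takes a nontrivial element of $N\cap\ker(\res_n)$, kills everything outside one bottom block by multiplying by a conjugate under a $d$-cycle supported on that block, and then runs an explicit sorting procedure (conjugation by $3$-cycles, then by the $d$-cycle, then multiplication) to manufacture $e_1+e_2$; normality of $N$ then yields all of $\la X_1\ra$. You instead identify $\la X_1\ra$ with $\bigoplus_B\mathrm{Std}_B$ and invoke irreducibility of the sum-zero $\F_2[S_d]$-module for odd $d$ together with transitivity on the bottom blocks. Both arguments use oddness of $d$ in the same two places (keeping $\mathbf 1$ out of the module, and making the sum-zero module irreducible), and your version is tighter: it sidesteps a weakness of the paper's reduction, namely that $h^{\sigma}h$ vanishes when $h$ is constant on the chosen block, which cannot happen for a nonzero block-component of an element of $\la X_1\ra$ precisely because $d$ is odd. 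Your use of Lemma~\ref{lemma:elementinres}(2) for step (B) coincides with the paper's first step.

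However, the lemma is stated for every $H_i=\la X_i\ra\ltimes\Aut(T_n)$, and your proof only treats $H_1$, where the base equals $\la X_1\ra$. For $i\ge 2$ the base $\la X_i\ra$ is strictly larger (its generator $e_1+e_{k_i}$ joins leaves in different bottom blocks), so your step (B) only hands you a nontrivial element of $\la X_i\ra$, not of $M$, and step (A) says nothing about invariant subgroups of $\la X_i\ra$ avoiding $M$. This is not a repairable omission: the $\Aut(T_n)$-invariant subgroup generated by $\mathbf 1_{B_1}+\mathbf 1_{B_2}=\sum_{j=1}^{d}(e_j+e_{d+j})$ lies in $\la X_2\ra$, is a nontrivial normal subgroup of $H_2$, and meets $\la X_1\ra$ trivially, since each of its nonzero elements is constantly $-1$ on some bottom block and hence has odd weight $d$ there. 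So uniqueness fails for $i\ge 2$ as stated; the paper's own proof has the same blind spot (its reduction tacitly assumes the element is nonconstant on some bottom block), and the phenomenon is essentially the one the author concedes in the Questions section. Since the only application (Lemma~\ref{lemma:E_n->E_n-1}) needs the case where the base is of $\la X_1\ra$-type, you should present exactly the $i=1$ case — which you have proved — and note explicitly that the general-$i$ claim must be dropped or reformulated.
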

\begin{proof}
To show $H_1$ is the unique minimum proper subgroups of $H_n$ we apply the two-step argument. Firstly we notice that the order of the generators are $2$. Secondly $H_n$ is a subgroup of $C_2\wr\Aut(T_n)$, and for any $h\in \la X_i\ra$ one can find $\sigma\Aut(T_n)$ such that $\sigma(h)\neq h$. By Lemma~\ref{lemma:elementinres} any proper normal subgroup of $H_n$ intersecting with the kernel of $\res:H_n\to\Aut(T_n)$ contains a nontrivial element. 

We will use the nontrivial element to construct $e_1+e_2$ by a sequence of conjugation. Since $\Aut(T_{n})$ acts transitively on $I(T_n)$, we can assume that the nontrivial element $h$ exists in the intersection has $-1$, the nontrivial element of $C_2$, in the block of $\mathcal{B}(T_n/T_{n-1})(1,\ldots, 1)$. We also notice that $\sigma=(((12\cdots d),1,\ldots,1);1)$ shift the block by conjugation, so $h^\sigma h$ has even $-1$ on the first block and $1$ on all entries of the reside blocks. Next we claim that for even $-1$ and odd $1$ on a black with $S_d$ acting on the index, we can always have
\[
(-1,\ldots, -1,1,\ldots, 1)
\]
by conjugating an even permutation. If $h$ is already in the desired form, then we are done. Otherwise there are three unceasing index, say $i$, $i+1$ and $i+2$, with $1$ and $-1$ distributing in one of the following layouts
\begin{align*}
    1, -1, -1\\
    -1, 1, -1\\
    1, -1, 1\\
    1, 1, -1.
\end{align*}
One can applying conjugation by the 3-cycle $(i, i+1, i+2)$ several times to move the above layouts to either $-1,-1,1$ or $-1,1,1$. If the element after applying conjugation is not in the desired form, we repeat the process. Since there are only finite many indices, we can have the desired from after finite many times of applying the process. Moreover since we apply $3-cycle$ each time, the desired result can be achieved by applying an even permutation. Now we can conjugate $(-1,-1,\ldots, -1,1,\ldots,1)$ by $(12\cdots d)$ to get $(1,-1,\ldots, -1,1,\ldots, 1)$. Note that the multiplication of $(-1,\ldots,-1,1,\ldots,1)$ and $(1,-1,,\ldots,-1,1,\ldots,1)$ gives $(-1,1,\ldots,1,-1,1,\ldots,1)$. Finally we repeating the second process to get $(-1,-1,1,\ldots, 1)$. Since an arbitrary proper normal subgroup contains the generator of $H_1$, we shows that $\la X_1\ra$ is the unique minimal normal subgroup of $H_i$.
\end{proof}
\begin{remark}
The above proof can be written as the following psudocode.

\begin{algorithm}[H]
\KwData{A tuple $\mathbf{a}=(a_1,\ldots, a_d)\in C_2^d$ with $\prod{a_i}=1$ and odd $d$}
\KwResult{The tuple $(-1,-1,1,\ldots,1)$}
\BlankLine
\While{$\mathbf{a}\neq(-1,-1,1,\ldots1)$}{
    $\mathbf{a}'=\mathbf{a}$\;
    $\mathbf{b}=(-1,\cdots,-1,1,\cdots,1)$  with same number of $-1$ as in $\mathbf{a}$\;
    \While{$\mathbf{a}'\neq\mathbf{b}$}{
        Find the smallest index $i$ such that $a'_i=1$ and $a'_{i+1}=-1$\;
        $(a'_i,a'_{i+1},a'_{i+2})=(a'_{i+1},a'_{i+2},a'_{i})$\;
    }
    $\mathbf{a}'=(a_d',a_1',a_2',\ldots,a_{d-1}')$\;
    $\mathbf{a}=\mathbf{a}\mathbf{a}'$\;
}
\caption{The pseudocode of the above proof}
\end{algorithm}
\end{remark}
\begin{proposition}\label{proposition:a1a2a3a4}
Let $a_1=(123)$ or $(12\cdots d)$. Then for $\mathbf{a}=((a_1,a_2,a_3,\ldots, a_d);1)\in E_2^2$ there exists $\sigma\in E_2^2$ such that $\mathbf{a}^{\sigma}=((a_1,a_2^{-1},a_3^{-1},\cdots,a_d^{-1});1)$ or $((a_1,a_2,a_3^{-1},\cdots,a_d^{-1});1)$.
\end{proposition}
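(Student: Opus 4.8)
The plan is to realize the passage from $\mathbf{a}$ to one of the two target elements as a single conjugation, and then to worry only about whether the conjugating element can be chosen inside $E_2^2=\Aut(T_2)\cap\ker(\sgn)$. First I would invoke Proposition~\ref{propostion:inverseautomorphismisbyconjugate} in the base case $n=1$: every $a_i\in S_d$ is conjugate to $a_i^{-1}$, so I may pick $\sigma_i\in S_d$ with $\sigma_i a_i\sigma_i^{-1}=a_i^{-1}$. Setting $\sigma^{(0)}=((1,\sigma_2,\ldots,\sigma_d);1)$ and using the wreath-product conjugation formula gives $\sigma^{(0)}\mathbf{a}(\sigma^{(0)})^{-1}=((a_1,a_2^{-1},\ldots,a_d^{-1});1)$, the first target; the top coordinate stays trivial because $\res_1$ is a homomorphism and $\res_1(\mathbf{a})=1$. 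The only obstruction is that $\sigma^{(0)}$ need not lie in $E_2^2$: by Proposition~\ref{prop:sgn} (for odd $d$), membership amounts to $\prod_{i\ge 2}\sgn(\sigma_i)=1$.

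The key device for repairing the parity is the centralizer $C(\mathbf{a})$ in $\Aut(T_2)$: replacing $\sigma^{(0)}$ by $\sigma^{(0)}c$ for $c\in C(\mathbf{a})$ does not change the image $\sigma^{(0)}\mathbf{a}(\sigma^{(0)})^{-1}$, so if $C(\mathbf{a})$ contains an $\sgn$-odd element I can flip the total parity and land in $E_2^2$ while still hitting the first target. I would produce such an element in two situations: (i) some $a_i$ with $i\ge 2$ has an odd permutation in its $S_d$-centralizer (insert it in the $i$-th coordinate); or (ii) two coordinates carry the same $S_d$-conjugacy class, in which case a transposition of those two subtrees, which is $\sgn$-odd for odd $d$ by Proposition~\ref{prop:sgn}, suitably dressed by conjugators, centralizes $\mathbf{a}$. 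In particular, if $a_1=(123)$ and $d\ge 5$ then $a_1$ has at least two fixed points, so the transposition of two of them lies in $C(a_1)$ and supplies the needed odd element, and the first target is always reached.

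The residual configuration is the one where no odd element of $C(\mathbf{a})$ exists: every $a_i$ is even with pairwise distinct, odd cycle lengths, and the cycle types of $a_1,\ldots,a_d$ are pairwise distinct, which forces $a_1=(12\cdots d)$. Here every conjugator of $\mathbf{a}$ to a target is forced to be diagonal with each $\sgn(\sigma_i)$ determined, so the remaining freedom is exactly the choice of target. I would compute these forced signs explicitly: a conjugator sending a permutation with distinct odd cycle lengths $\ell_1,\ldots,\ell_r$ to its inverse is a product of the cycle reversals and hence has sign $\prod_j(-1)^{(\ell_j-1)/2}$. Passing from the first target to the second keeps $a_2$ (choosing $\sigma_2\in C(a_2)$, which is even) rather than inverting it, so the total parity changes precisely by the forced sign attached to $a_2$. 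The hard part will be the final parity bookkeeping: showing that in this forced-sign configuration at least one of the two targets gives $\prod\sgn(\sigma_i)=1$. This is the only place where the special shape of $a_1$ and the distinctness of the cycle types must be played against each other, and it is the only step where the two alternatives in the statement are genuinely needed; I expect it to be the main obstacle.
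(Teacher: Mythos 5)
Your opening move is the right one and is exactly how the paper starts: conjugate by the diagonal element $\sigma^{(0)}=((1,s_2,\ldots,s_d);1)$ with $a_i^{s_i}=a_i^{-1}$, note via Proposition~\ref{prop:sgn} that membership of $\sigma^{(0)}$ in $E_2^2$ reduces to $\prod_{i\geq 2}\sgn(s_i)=1$, and treat everything else as a parity problem. But the proof is not finished, and the step you leave open is the entire content of the proposition. In your residual configuration you must show that at least one of the two stated targets admits an even conjugator, and as you have framed it --- with coordinate $2$ as the \emph{only} coordinate that may be spared --- this is not provable. The forced sign of the coordinate $a_i$ with distinct odd cycle lengths $\ell_1,\ldots,\ell_r$ is $\epsilon_i=\prod_j(-1)^{(\ell_j-1)/2}$, which can perfectly well equal $+1$ (a $5$-cycle, for instance), so nothing prevents $\prod_{i\geq 2}\epsilon_i=-1$ while $\epsilon_2=+1$, in which case \emph{both} of your literal targets are reached only by odd conjugators. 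The statement has to be read the way the paper's own proof reads it (``without loss of generality we assume $s_2$ is odd''): the spared coordinate may be any single index $i_0\geq 2$. With that reading the ``hard'' bookkeeping is one line: if $\prod_{i\geq 2}\sgn(s_i)=-1$ then an odd number of the $s_i$ are odd, hence at least one $s_{i_0}$ is odd; replacing that $s_{i_0}$ by $1$ makes the conjugator even and produces the second target. That single observation is essentially the paper's whole proof, and it is the one idea missing from your write-up.

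Beyond the gap, the machinery you build is correct but unnecessary. The centralizer analysis --- odd elements of $C_{S_d}(a_i)$, the dressed subtree transposition when two coordinates are $S_d$-conjugate, the identification of the residual configuration and the forced-sign computation --- does buy a slightly sharper conclusion (the first target alone suffices except when all $a_i$ are pairwise non-conjugate with distinct odd cycle lengths, a configuration that is in fact empty for small $d$ by counting partitions into distinct odd parts), but the paper avoids all of it by making no attempt to control the signs of the individual $s_i$ and applying the ``drop one odd $s_{i_0}$'' move uniformly. Note also that neither your argument nor the paper's ever uses the hypothesis $a_1=(123)$ or $(12\cdots d)$; that hypothesis is only relevant to how the proposition is applied in Lemma~\ref{lemma:E_n->E_n-1}.
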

\begin{proof}
For any $a_i$ there exists an element $s_i\in S_d$ such that $a_i^{s_i}=a_i^{-1}$. If $((1,s_2,\ldots, s_d);1)$ is in $E_2^2$, then it is done. Otherwise there are odd many odd permutations $s_i$. Without lose of generality we assume $s_2$ is odd. Then we can conjugate $\mathbf{a}$ by $((1,1,s_3,\ldots, s_{d});1)\in E_2^2$ to get $((a_1,a_2,a_3^{-1},\cdots,a_d^{-1});1)$. 
\end{proof}

\begin{lemma}\label{lemma:E_n->E_n-1}
Let $d$ be an odd integer, and let $\res_{n}:E^2_{n}\to E^2_{n-1}$ be the natural restriction from $\Aut(T_{n+1})$ to $\Aut(T_n)$. Let $M_n=((A_d)_{i\in I},1)\subseteq E_{n+1}\subseteq \Aut(T_1)\wr\Aut(T_n)$ where $A_d$ is the alternative group of degree $d$. Then $E_{n}^2$ has a tower of normal subgroup
\[
\{1\}\triangleleft M_n\triangleleft\ker(\res_{n})\triangleleft E_{n}^2
\]
satisfying Condition~\ref{eq:towerofnormalsubgroup}.
\end{lemma}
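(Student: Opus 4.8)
The plan is to verify Condition~\ref{eq:towerofnormalsubgroup} for the given three-term tower by establishing two uniqueness statements: \emph{(a)} $M_n$ is the unique minimal normal subgroup of $E_n^2$, and \emph{(b)} $\ker(\res_n)/M_n$ is the unique minimal normal subgroup of $E_n^2/M_n$. Normality of $\ker(\res_n)$ is automatic, being a kernel; and $M_n\triangleleft E_n^2$ because every element of $E_n^2$ permutes the bottom-level fans and conjugates each factor $A_d$ inside $S_d$, so the all-$A_d$ bottom layer is preserved (as $A_d\triangleleft S_d$), while $M_n\subseteq\ker(\res_n)$ is clear. Granting \emph{(a)} and \emph{(b)}, any nontrivial $N\triangleleft E_n^2$ contains $M_n$; and if $N\supsetneq M_n$, then its image in $E_n^2/M_n$ is a nontrivial normal subgroup, hence contains $\ker(\res_n)/M_n$, so $N\supseteq\ker(\res_n)$. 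Thus every normal subgroup of $E_n^2$ equals $\{1\}$ or $M_n$, or else contains $\ker(\res_n)$, which is precisely the required condition (the normal subgroups lying above $\ker(\res_n)$ being governed by the isomorphism $E_n^2/\ker(\res_n)\cong E_{n-1}^2$ and induction on $n$).

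For \emph{(a)} I would argue along the two steps outlined just before the lemma. First, that every nontrivial $N\triangleleft E_n^2$ meets $M_n$ nontrivially is immediate from Lemma~\ref{lemma:scsneqc} (whose map, there written $\res_{n-1}\colon E_n^2\to E_{n-1}^2$, is the present $\res_n$): given a nontrivial $\sigma\in N$, its first assertion produces $\mathbf c\in\ker(\res_n)$ with $1\neq[\sigma,\mathbf c]\in N\cap\ker(\res_n)$ by normality, and its second assertion, applied to $[\sigma,\mathbf c]$, then yields some $\sigma'$ with $1\neq[\sigma',[\sigma,\mathbf c]]\in N\cap M_n$. Second, I must show $M_n$ is minimal, i.e. that the normal closure in $E_n^2$ of any nontrivial $\nu=((c_i)_i,1)\in M_n$, say with $c_{i_0}\neq 1$, is all of $M_n$. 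Conjugating $\nu$ by the element $\mu=((s_j)_j,1)\in M_n$ supported at $i_0$ (with $s_{i_0}\in A_d$), the commutator $[\mu,\nu]$ is supported only at the coordinate $i_0$, where it equals $[s_{i_0},c_{i_0}]$. For $d\geq 5$ the group $A_d$ is simple with trivial centre, so as $s_{i_0}$ ranges over $A_d$ these commutators and their conjugates fill the whole factor $A_d^{(i_0)}$; transitivity of $E_n^2$ on the index set (Lemma~\ref{lemma:transitive}) then spreads $A_d^{(i_0)}$ to every coordinate, producing $M_n$.

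For \emph{(b)} I would pass to $\overline G=E_n^2/M_n$, in which $\overline{\ker(\res_n)}=\ker(\res_n)/M_n$ is the elementary abelian $2$-group recording the parities of the bottom-level fans, subject to the block-parity constraints defining $E_n^2$, and $\overline G$ is realized inside a group of the shape $C_2^{d^{n-1}}\ltimes\Aut(T_{n-1})$. That every nontrivial normal subgroup of $\overline G$ meets $\overline{\ker(\res_n)}$ follows once more from the commutator mechanism of Lemma~\ref{lemma:elementinres}, and the minimality of $\overline{\ker(\res_n)}$ — equivalently, irreducibility of the induced action on this $\mathbb{F}_2$-module — is exactly the statement of Lemma~\ref{lemma:C2AutTn} that $\langle X_1\rangle$ is the unique minimal normal subgroup, the required generation being supplied by the sign-manipulation argument recorded in its pseudocode. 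Throughout, the case $d=3$ must be handled separately: there $A_3\cong C_3$ is abelian, the single-coordinate commutator $[\mu,\nu]$ is trivial, and minimality of $M_n$ is instead the irreducibility of the $\mathbb{F}_3$-module $M_n$ under $E_n^2$, which I would obtain from Proposition~\ref{proposition:a1a2a3a4} (inverting individual coordinates) together with transitivity, in direct parallel with the $\mathbb{F}_2$ argument for the layer above.

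The heart of the proof, and its main obstacle, is the minimality of the two bottom layers: showing that the normal closure of an arbitrary nontrivial element is the entire layer. The difficulty is that the tree gives the coordinates a genuine imprimitive block structure, which could in principle support proper diagonal or parity-constrained $E_n^2$-invariant subgroups; ruling these out requires exhibiting enough independent action on single coordinates — via single-coordinate $A_d$-conjugations when $d\geq 5$, and via the coordinatewise sign flips of Proposition~\ref{proposition:a1a2a3a4} and the algorithm of Lemma~\ref{lemma:C2AutTn} for the parity layer and for $d=3$. The abelian case $d=3$, where $M_n$ is an $\mathbb{F}_3$-module rather than a product of nonabelian simple groups, is the portion least served by the generic simple-group argument and will demand the most care; a minor but constant bookkeeping point is the index shift between $\res_n$ here and $\res_{n-1}$ in Lemma~\ref{lemma:scsneqc}, which name the same bottom-peeling restriction.
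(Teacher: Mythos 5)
Your proposal is correct and follows the same architecture as the paper's proof: normality of $M_n$ via the $\sigma=\mathbf{b}\mathbf{a}$ decomposition, the two applications of Lemma~\ref{lemma:scsneqc} to force any nontrivial normal subgroup to meet $M_n$, and the passage to $E_n^2/M_n$ realized inside $C_2\wr\Aut(T_{n-1})$ handled by Lemma~\ref{lemma:C2AutTn}. The one place you genuinely diverge is the minimality of $M_n$. The paper argues uniformly for all odd $d$ by projecting $N\cap M_n$ onto a single coordinate via the maps $\rho_i$ (the image is a normal subgroup of $S_d$ contained in $A_d$, hence trivial or all of $A_d$) and then isolating a single-coordinate generator $((a_1,1,\ldots,1);1)$ through the explicit conjugate-and-multiply manipulations of Proposition~\ref{proposition:a1a2a3a4}. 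You instead invoke simplicity and trivial centre of $A_d$ for $d\geq 5$ to fill one factor $A_d^{(i_0)}$ with commutators $[s_{i_0},c_{i_0}]$, which is cleaner and sidesteps the coordinate gymnastics entirely, and you correctly flag that $d=3$ requires a separate $\mathbb{F}_3$-module argument via Proposition~\ref{proposition:a1a2a3a4} --- which is exactly where you rejoin the paper's method. Both routes finish identically, using Lemma~\ref{lemma:transitive} to spread a single coordinate to all of $M_n$. Two further small points in your favour: you dispense with the paper's case split on the order of $\gamma=[\sigma,\mathbf{c}]$ by applying the second half of Lemma~\ref{lemma:scsneqc} directly, and your reading of the off-by-one between $\res_n$ here and $\res_{n-1}$ in Lemma~\ref{lemma:scsneqc} agrees with the paper's (admittedly inconsistent) indexing.
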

\begin{proof}
We first need to show that $M_n$ is a normal subgroup. It is clear a subgroup of $E_{n}^2$. For an element $\sigma\in E_{n}^2$ one can express $\sigma$ as $((a_i)_{i\in I},b)\in\Aut(T_1)\wr\Aut(T_{n})$. Let $\mathbf{a}=((a_i)_{i\in I},1)$ and $\mathbf{b}=((1),b)$. Then we have $\sigma=\mathbf{b}\mathbf{a}$ and
\[
\sigma M_n\sigma^{-1}=\mathbf{b}\mathbf{a}M_n\mathbf{a}^{-1}\mathbf{b}^{-1}=\mathbf{b}M_n\mathbf{b}^{-1}=M_n.
\]
Since we show $M_n$ is a subgroup of $E_n^2$ and a normal subgroup of a group containing $E_n^2$, $M_n$ is a normal subgroup of $E_n^2$.

Next we shall show that a nontrivial normal subgroup $N$ contains an element in $M_n$. Given an element $\sigma\in N$ and $\mathbf{c}=((c_i)_{i\in I},1)\in E^n_{n}$ we have $\sigma\mathbf{c}\sigma^{-1}\mathbf{c}^{-1}\in N$. If $\sigma$ is in $M_n$, then it is done. Now we can assume $\sigma\not\in M_n$, and Lemma~\ref{lemma:scsneqc} shows that for a given $\sigma$ we have $\mathbf{c}\in \ker(\res_{n-1})$ such that
\[
\sigma\mathbf{c}\sigma^{-1}\mathbf{c}^{-1}\neq 1.
\]
Since $c$ is also in a normal subgroup, it follows that $1\neq \sigma\mathbf{c}\sigma^{-1}\mathbf{c}^{-1}\in N\cap \ker(\res_n)$. Denote $\sigma\mathbf{c}\sigma^{-1}\mathbf{c}^{-1}$ by $\gamma$. Consequently if $\gamma$ has order greater than $2$, then we have $\gamma^2\in N\cap M_n$. Otherwise using the second part of Lemma~\ref{lemma:scsneqc}, we have some $\delta\in E_n^2$ such that $\delta\gamma\delta^{-1}\gamma^{-1}\in N\cap M_n$.

If we can further show that $M_n$ is a smallest normal subgroup of $E^2_{n}$, then we can conclude that $M$ is the unique normal subgroup of $E^2_{n}$. We will use induction to show that $M_n$ is the smallest normal subgroup of $E^2_n$ for $n\geq 2$. Let
\begin{align*}
    \rho_i:\ker(\res_1:E_2^2\to E_1^2) &\to \Aut(T_1);\\
           ((a_i)_{i\in I},1) &\mapsto a_i.
\end{align*}
be a homomorphism. If $N$ is a normal subgroup of $E_2^2$, then $M_2\cap N$ is a normal subgroup of $\ker(\res_1)$ which implies $\rho_i(M_2\cap N)$ is a normal subgroup of $\Aut(T_1)$. Thus $\rho_i(M_2\cap N)\cong \{1\}$ and $A_d$. Since $\res_{1}(E_2^2)=\Aut(T_1)$ acts transitively on the level $2$ of the tree, we conclude that $\rho_i(M_2\cap\ker(\res_1))\cong A_d$ for all $i$. Without lose of generality we may take $i=1$, so for any $a_1\in A_d$ and for $a_i\in S_d$ with $i=2,\ldots, d$ the element $\mathrm{a}=((a_1,a_2,\ldots, a_{d});1)$ exists in $N\cap M_2$. In particular we may choose $a_1=(123)$ or $(12\cdots n)$, where $\{(123),(12\cdots n)\}$ forms a generating set of $A_d$ for an odd $d$. We claim $\mathrm{a}'=((a_1,1,\ldots, 1);1)$ exists in the intersection. By Proposition~\ref{proposition:a1a2a3a4} we know we can conjugate $\mathrm{a}$ to $\mathrm{a}_1=((a_1,a_2^{-1},\ldots, a_d^{-1});1)$ or $\mathrm{a}_2=((a_1,a_2,a_3^{-1},\ldots, a_d^{-1});1)$. If $\mathrm{a}$ conjugates to $\mathrm{a}_1$, then a power of $\mathrm{a}\mathrm{a}_1$ is in the desired form. If the result of conjugation is the second case, then we can conjugate $\mathrm{a}\mathrm{a}_2$ by $((1,s_2,(12),1,\ldots, 1);1)\in E_2^2$ to get $\mathrm{a}'=((a_1^2,a_2^{-2},1,\ldots,1);1)$, then the desired result can be achieve by taking a power of $\mathrm{a}\mathrm{a}_2\mathrm{a}'$. Therefore we prove that $M_2$ is a proper minimum subgroup of $E_2^2$. Let us assume $M_n$ is a proper minimum subgroup of $E_n^2$. It implies $\mathcal{A}=\{((a,1,\ldots,1);1)\in\Aut(T_1)\wr\Aut(T_{n-1})\mid a\in A_d\}$ is a subgroup of $M_n\subseteq E_n^2$, and so
\[
\mathcal{A}'=\{((a',1,\ldots,1);1)\in E_n^2\wr\Aut(T_1))\mid a\in \mathcal{A}\}=\{((a,1,\ldots,1);1)\in \Aut(T_1)\wr\Aut(T_n))\mid a\in A_d\}
\]
is a subgroup of $M_{n+1}$. By Lemma~\ref{lemma:transitive} it follows that $M_{n+1}$ is a minimum subgroup of $E_{n+1}^2$. Thus we show that $M_{n}$ is the unique proper minimum subgroup of $E_n^2$ for $n\geq 2$.

We claim that $E_n^2/M_n$ is isomorphic to $C_2\wr\Aut(T_{n-1})$. Let us define a homomorphism
\begin{align*}
    \phi:&E_n^2\subseteq\Aut(T_1)\wr\Aut(T_{n-1})\to C_2\wr\Aut(T_{n-1})\\
    &((a_i)_{i\in I(T_{n-1})},b)\mapsto ((\sgn(a_i))_{i\in I(T_{n-1})},b).
\end{align*}
It is standard to check $\ker(\phi)\cong M_n$ and $\phi$ is onto. Finally we apply Lemma~\ref{lemma:C2AutTn} to conclude that $\ker(\res_{n-1})/M_2$ is the unique minimal subgroup of $E_n^2/M$. Now we show every assertion of this lemma. 
\end{proof}
\begin{theorem}
The rank of the group $E_n^2$ is $2$ for all $n$. Moreover $E_n^2$ has a unique chief series. 
\end{theorem}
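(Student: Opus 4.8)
The plan is to prove both assertions simultaneously by induction on $n$, driving everything off the tower
\[
\{1\}\triangleleft M_n\triangleleft\ker(\res_n)\triangleleft E_n^2
\]
produced in Lemma~\ref{lemma:E_n->E_n-1}. The base case $n=1$ is handled directly: $E_1^2=\Aut(T_1)=S_d$ is non-cyclic and $2$-generated, and for odd $d\geq 3$ its alternating subgroup $A_d$ is its unique minimal normal subgroup, so its only chief series is $1\triangleleft A_d\triangleleft S_d$.

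For the rank I would argue the two inequalities separately. The lower bound is cheap: the level-$1$ restriction $\res_1\colon E_n^2\to\Aut(T_1)=S_d$ is surjective, because the defining condition of $E_n^2$ only constrains signs from the second level downward; hence $d(E_n^2)\geq d(S_d)=2$. For the upper bound I would use that the tower above satisfies Condition~\ref{eq:towerofnormalsubgroup}, so the consequence of Lemma~\ref{lemma:boundedrank} applies and gives $d(E_n^2)\leq\max\{2,\,d(E_n^2/\ker(\res_n))\}$. Since $\res_n$ carries $E_n^2$ onto $E_{n-1}^2$ with kernel $\ker(\res_n)$, we have $E_n^2/\ker(\res_n)\cong E_{n-1}^2$, which is $2$-generated by the inductive hypothesis; combining the two bounds yields $d(E_n^2)=2$.

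For uniqueness of the chief series I would build the series from the bottom and check that each link is forced. By Lemma~\ref{lemma:E_n->E_n-1}, $M_n$ is the unique minimal normal subgroup of $E_n^2$, so the first chief factor is necessarily $M_n$. In the quotient $E_n^2/M_n$, Lemma~\ref{lemma:E_n->E_n-1} together with Lemma~\ref{lemma:C2AutTn} identifies a unique minimal normal subgroup, which therefore supplies the next (again forced) chief factor and exhausts the segment up to $\ker(\res_n)$. Above $\ker(\res_n)$ the quotient is $E_{n-1}^2$, whose chief series is unique by induction; pulling this back term-by-term under $\res_n$ by the correspondence theorem completes a chief series of $E_n^2$. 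Since at every stage the relevant quotient has a unique minimal normal subgroup, no other refinement is possible, so the chief series is unique.

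The crux, and the step I expect to be the main obstacle, is the uniqueness half rather than the rank count. It is not enough to know that $E_n^2$ has a single minimal normal subgroup; one must verify that every intermediate quotient $E_n^2/N_i$ occurring along the series again has a single minimal normal subgroup, which is exactly what Lemmas~\ref{lemma:E_n->E_n-1} and~\ref{lemma:C2AutTn} are arranged to provide. Threading these through the induction is delicate in the case $d=3$, where the chief factors coming from the groups $M_k$ are elementary abelian ($A_3\cong C_3$) and the factors coming from the $\ker(\res)$-layers are elementary abelian $2$-groups; there one must confirm that the conjugation action of $E_n^2$ renders these $\F_3$- and $\F_2$-modules irreducible, so that they are genuinely minimal and the minimal normal subgroup at each stage is unique.
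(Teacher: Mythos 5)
Your proposal follows essentially the same route as the paper: both use the tower $\{1\}\triangleleft M_n\triangleleft\ker(\res_n)\triangleleft E_n^2$ from Lemma~\ref{lemma:E_n->E_n-1} together with Lemma~\ref{lemma:boundedrank} to get $d(E_n^2)\leq\max\{2,d(E_{n-1}^2)\}$ and induct down to $E_1^2$, and both obtain uniqueness of the chief series by pulling back the forced unique minimal normal subgroups stage by stage. Your version is in fact slightly more complete, since you explicitly supply the lower bound $d(E_n^2)\geq 2$ (via surjectivity of $\res_1$ onto $S_d$), which the paper's proof leaves implicit.
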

\begin{proof}
By Lemma~\ref{lemma:boundedrank} and Lemma~\ref{lemma:E_n->E_n-1} we have $d(E_n^2)\leq \max\{2,d(E_n^2/M_n)\}$ and $d(E_n^2/M_n)\leq \max\{2,d(E_n^2/\res_n)\}$, so $d(E_n^2)\leq \max\{2,E_n^2/\res_n\}$. Moreover we have $E_n^2/\res_n\cong E_{n-1}^2$, so we can conclude that 
\[
d(E_n^2)\leq\max\{2,d(E_{n-1}^2)\}\leq \cdots\leq\max\{2, d(E_1^2)\}=2.
\]
Lemma~\ref{lemma:E_n->E_n-1} also shows that $M_{n-1}$ is the proper unique minimal normal subgroup of $E_{n-1}$, so $\res_{n-1}^{-1}(M_{n-1})$ is a unique minimum normal subgroup in $E_n$ that contains the tower of normal subgroup $M_n\triangleleft \ker(\res_n)$. Thus we can recursively construct a unique normal series such that each factor is a chief factor.
\end{proof}
\subsection{Examples}
In this section we are going to investigate the arboreal representatives of $f(z)=2z^3-3z^{2}+1$ at a proper chosen base point $x$. The idea of the proof is almost identical to \cite{B-F-H-J-Y-2017}.

Let $K$ be a number field and let $x\in K\setminus\{0,1\}$. We assume the pair $(K,x)$ satisfies the following condition:
\begin{equation}\label{Condition}
\begin{array}{c}
     \text{there exists $\mathrm{p}$ and $\mathrm{q}$ of $K$ lying above $2$ and $3$ such that}  \\
     \text{either $v_\mathrm{q}(x)=1$ or $v_\mathrm{q}(1-x)=1$, and either $v_\mathrm{p}(1-x)= 1$ or $v_\mathrm{p}(x)=1$}
\end{array}
\end{equation}
\begin{lemma}
If $(K,x)$ satisfies Condition~\ref{Condition}, then $f^n(z)-x$ is Eisenstein at $\mathrm{q}$ for all $n\geq 1$. In particular $f^n(z)-x$ is irreducible.
\end{lemma}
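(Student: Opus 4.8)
The plan is to reduce the polynomial modulo $\mathrm{q}$ and read the Eisenstein conditions directly off the reduction. The computational heart is the reduction of $f$ itself: since $\mathrm{q}$ lies over $3$, the residue field has characteristic $3$, so the term $-3z^2$ dies and $2\equiv-1$, giving
\[
\bar f(z)=2z^3+1=1-z^3=(1-z)^3.
\]
First I would compute all iterates of $\bar f$ using that cubing is the Frobenius endomorphism in characteristic $3$, so that $(1-z^3)^3=1-z^9$. This yields $\bar f^{2}(z)=1-(1-z^3)^3=z^{9}$, and then by induction (composing $\bar f$ for the odd step and $\bar f^{2}$ for the even step) the clean formula
\[
\bar f^{n}(z)=\begin{cases} z^{3^n}, & n\text{ even},\\ 1-z^{3^n}, & n\text{ odd}.\end{cases}
\]

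Second, I would use this to control the coefficients of $f^n(z)-x$. The reduction $\overline{f^n(z)-x}=\bar f^{n}(z)-\bar x$ is supported only in degrees $0$ and $3^n$, so every intermediate coefficient of $f^n(z)-x$ already lies in $\mathrm{q}$. The residual issue is the constant term, and here the periodic orbit of $f$ enters: $f(0)=1$ and $f(1)=0$, so $\{0,1\}$ is a $2$-cycle and $f^n(0),f^n(1)\in\{0,1\}$ alternate with the parity of $n$. The roots of $f^n(z)-x$ collapse modulo $\mathrm{q}$ onto $0$ or onto $1$ according to whether $\bar x=0$ or $\bar x=1$ (and the assignment flips with the parity of $n$); translating the variable by that periodic point turns the reduction into a pure monomial $c\,w^{3^n}$, which is exactly the assertion that all non-leading coefficients of the (translated) polynomial lie in $\mathrm{q}$.

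Third, I would verify the two endpoint conditions. The leading coefficient of $f^n$ is $2^{(3^n-1)/2}$, a $\mathrm{q}$-unit because $\mathrm{q}\mid 3$, giving the required unit leading term. After the translation above the constant term is either $f^n(0)-x=-x$ or $f^n(1)-x=1-x$; invoking the matching clause of Condition~\ref{Condition}, namely $v_{\mathrm{q}}(x)=1$ or $v_{\mathrm{q}}(1-x)=1$, makes this constant term have $\mathrm{q}$-valuation exactly $1$. All three Eisenstein hypotheses at $\mathrm{q}$ then hold, so $f^n(z)-x$ is, up to a variable translation by $0$ or $1$, Eisenstein at $\mathrm{q}$, hence irreducible over $K$.

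I expect the \emph{main obstacle} to be the parity bookkeeping. Because the reduction oscillates between $z^{3^n}$ and $1-z^{3^n}$, a single clause of Condition~\ref{Condition} produces a monomial reduction and a constant term of valuation $1$ for only one parity of $n$; one must carefully pair each parity with the correct translation (by $0$ or by $1$) and the correct clause. A related sharp point is that Eisenstein demands the constant term avoid $\mathrm{q}^2$, so the hypothesis $v_{\mathrm{q}}(\cdot)=1$ is used in full force — the weaker inequality $\ge 1$ would only secure the intermediate coefficients, not the decisive constant-term condition.
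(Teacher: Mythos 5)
Your proposal is correct and follows essentially the same route as the paper: reduce mod $\mathrm{q}$ to get $\bar f(z)=(1-z)^3$, use Frobenius in characteristic $3$ to see $\bar f^{n}$ is a pure $3^n$-th power of a linear factor, translate by the appropriate point of the $2$-cycle $\{0,1\}$, and check the constant term against the relevant clause of Condition~\ref{Condition}. If anything, your explicit parity bookkeeping and the verification that the constant term has $\mathrm{q}$-valuation exactly $1$ are spelled out more carefully than in the paper's induction.
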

\begin{proof}
For $n=1$ and $v_\mathrm{q}(x)=1$ we have
\[
f(z)\equiv (-z^3+1)\equiv -(z-1)^3\mod \mathrm{q},
\]
so $f(z-1)-x$ is Eisenstein. For $n=2$ we have
\[
f^2(z)=-(f(z)-1)^3\equiv -(-z^3)^3=z^{9}\mod \mathrm{q}.
\]
Then it is easy to prove the statement by induction.

For $v_\mathrm{q}(1-x)=1$ we have
\[
f(z)\equiv(-z^3)\mod\mathrm{q},
\]
and
\[
f^2(z)\equiv -f(z)^3\equiv (z-1)^3\mod \mathrm{q}.
\]
Again the conclusion will follow easily by induction.
\end{proof}
\begin{lemma}\label{lemma:ramification index}
Let $(K,x)$, $\mathrm{p}$ and $\mathrm{q}$ satisfy Condition~\ref{Condition}, and let $y\in f^{-n}(x)$.  Then:
\begin{itemize}
    \item There are prime $\mathrm{p}'$ and $\mathrm{q}'$ of $K(y)$ lying above $\mathrm{p}$ and $\mathrm{q}$ respectively such that
    \[
    e(\mathrm{p}'/\mathrm{p})=2^n\textbf{ and }e(\mathrm{q}'/\mathrm{q})=3^n
    \]
    \item $(K(y),y)$ satisfies Condition~\ref{Condition}.
\end{itemize}
\end{lemma}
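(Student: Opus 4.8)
The plan is to reduce the statement to the case $n=1$ and then propagate it up a tower of fields, using that ramification indices multiply in towers. Writing $y_k=f^{n-k}(y)$ for $0\le k\le n$, I get a chain $K=K(y_0)\subseteq K(y_1)\subseteq\cdots\subseteq K(y_n)=K(y)$ in which $y_k\in f^{-1}(y_{k-1})$ at each stage. Everything then rests on the following single-step claim: \emph{whenever $(F,\beta)$ satisfies Condition~\ref{Condition} at primes $\mathrm{p},\mathrm{q}$ and $\gamma\in f^{-1}(\beta)$, there are primes $\mathrm{p}'\mid\mathrm{p}$ and $\mathrm{q}'\mid\mathrm{q}$ of $F(\gamma)$ with $e(\mathrm{p}'/\mathrm{p})=2$, $e(\mathrm{q}'/\mathrm{q})=3$, and $(F(\gamma),\gamma)$ again satisfies Condition~\ref{Condition} at $\mathrm{p}',\mathrm{q}'$.} Granting this, an induction on $k$ (base $(K,x)$, valid by hypothesis) produces primes $\mathrm{p}_k,\mathrm{q}_k$ of $K(y_k)$ with $e(\mathrm{p}_k/\mathrm{p}_{k-1})=2$ and $e(\mathrm{q}_k/\mathrm{q}_{k-1})=3$, and multiplicativity gives $e(\mathrm{p}_n/\mathrm{p})=2^n$, $e(\mathrm{q}_n/\mathrm{q})=3^n$, which is exactly the lemma. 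Note the single step uses that $f(z)-\beta$ is irreducible over $F$ (Eisenstein at $\mathrm{q}$, by the previous lemma), so that $F(\gamma)=F[z]/(f(z)-\beta)$ and the splitting of each rational prime in $F(\gamma)$ is read off from the factorization of $f(z)-\beta$ over the completion.

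For the prime $\mathrm{q}$ above $3$ the claim is essentially the previous lemma. Since $f(z)\equiv-(z-1)^3\pmod{\mathrm{q}}$, when $v_\mathrm{q}(\beta)=1$ the shifted polynomial $f(w+1)-\beta$ is Eisenstein at $\mathrm{q}$, and when $v_\mathrm{q}(1-\beta)=1$ the polynomial $f(z)-\beta$ is itself Eisenstein at $\mathrm{q}$; in either case $F(\gamma)/F$ is totally ramified of degree $3$ at $\mathrm{q}$, giving the unique $\mathrm{q}'$ with $e(\mathrm{q}'/\mathrm{q})=3$. Reading the uniformizer off the Eisenstein model yields $v_{\mathrm{q}'}(1-\gamma)=1$ in the first case and $v_{\mathrm{q}'}(\gamma)=1$ in the second, so $(F(\gamma),\gamma)$ meets the $\mathrm{q}$-part of Condition~\ref{Condition} (the active disjunct may switch between $\gamma$ and $1-\gamma$, which is harmless).

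The substance is the prime $\mathrm{p}$ above $2$, where $f$ drops degree modulo $\mathrm{p}$ (its leading coefficient $2$ vanishes), so one root of $f(z)-\beta$ escapes and the ramification is governed by the double critical point. Using $f(z)=(z-1)^2(2z+1)$, hence $f(w+1)=w^2(2w+3)$ and $f(z)-1=z^2(2z-3)$, I center at the relevant critical point: when $v_\mathrm{p}(\beta)=1$ the substitution $w=z-1$ turns $f(z)-\beta$ into $g(w)=2w^3+3w^2-\beta$, and when $v_\mathrm{p}(1-\beta)=1$ I keep $g(z)=2z^3-3z^2+(1-\beta)$. In both models the $\mathrm{p}$-adic Newton polygon has lower vertices $(0,1),(2,0),(3,e(\mathrm{p}/2))$: the segment from $(0,1)$ to $(2,0)$ has slope $-\tfrac12$ and horizontal length $2$, and the final segment has positive slope. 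Since $-\tfrac12$ is in lowest terms with denominator equal to the length, this segment contributes a single irreducible totally ramified quadratic factor over $F_\mathrm{p}$, while the remaining factor is linear. This yields a prime $\mathrm{p}'$ with $e(\mathrm{p}'/\mathrm{p})=2$ and $f(\mathrm{p}'/\mathrm{p})=1$, whose uniformizer is the centering variable: $v_{\mathrm{p}'}(1-\gamma)=1$ when $v_\mathrm{p}(\beta)=1$, and $v_{\mathrm{p}'}(\gamma)=1$ when $v_\mathrm{p}(1-\beta)=1$. Thus $(F(\gamma),\gamma)$ also meets the $\mathrm{p}$-part of Condition~\ref{Condition}, completing the single-step claim.

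I expect the $\mathrm{p}$-adic analysis to be the main obstacle: one must argue that the length-two, slope-$(-\tfrac12)$ Newton segment genuinely produces one ramified quadratic prime rather than two unramified primes (here the residual polynomial is linear, so this is automatic once the theory is invoked), and one must track that the correct preimage—the root attached to the quadratic factor, among the conjugate roots all generating the same field $F[z]/(f(z)-\beta)$—supplies the uniformizer needed to revive Condition~\ref{Condition}. The $\mathrm{q}$-side and the tower bookkeeping are routine by comparison.
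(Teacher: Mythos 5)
Your proof is correct and follows essentially the same route as the paper's: a single-step analysis at $n=1$ showing that ramification index $3$ appears at $\mathrm{q}$ (via the Eisenstein shape of $f(z)-\beta$ or its shift) and index $2$ at $\mathrm{p}$ (via the Newton polygon), that Condition~\ref{Condition} regenerates for $(K(y),y)$, and then induction up the tower with multiplicativity of ramification indices. Your write-up is somewhat more careful than the paper's in the $\mathrm{p}$-part --- the paper treats the subcase $v_\mathrm{p}(x)=1$ only by reducing $f(z)-x$ modulo $\mathrm{p}$ to $-(z-1)^2$, whereas you run the Newton-polygon argument in both subcases, which is the cleaner justification of $e(\mathrm{p}'/\mathrm{p})=2$ and of the uniformizer statement.
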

\begin{proof}
    We first prove the assertion for $\mathrm{q'}$. Suppose $v_\mathrm{q}(x)=1$, then $y\in f^{-1}(x)$ has $(y-1)^3\mod q$ which implies there exists a unique prime $\mathbf{q}'$ of $K(y)$ above $q$ satisfies $v_{\mathrm{q}'}(1-y)=1$, i.e. $e(\mathrm{q}'/\mathrm{q})=3$. On the other hand, if we have $v_{\mathrm{q}}(1-x)=1$, then it implies there exists a unique prime $\mathrm{q}'$ of $K(y)$ such that $v_{\mathbf{q}'}(y)=1$. 
    
    For $\mathrm{p}'$ we also prove the statement case by case. Let $v_\mathrm{p}(1-x)=1$, then we note that the Newton polygon of $f(z)-x$ has a segment of length $2$ and height $1$ which implies there exists a prime $\mathrm{p}'$ above $\mathrm{p}$ such that $e(\mathrm{p}'/\mathrm{p})=2$. Moreover we have $v_\mathrm{p'}(y)=1$ for $y\in f^{-1}(x)$.
    
    Let $v_\mathrm{p}(x)=1$, then we have
    \[
    f(z)-x\equiv -z^2+1\equiv -(z-1)^2\mod\mathrm{p}.
    \]
    Thus there exists a prime $\mathrm{p}'$ over $\mathrm{p}$ with $v_{\mathrm{p}'}(1-y)=1$.
    
    Since $x$ is arbitrary chosen with an arbitrary $y\in f^{-1}(x)$ such that $(K(y),y)$ satisfies Condition~\ref{Condition}, the statement is true by induction.
\end{proof}

\begin{corollary}
Let $(K,x)$ be as in Proposition~\ref{lemma:ramification index}. Then $6^n|[K_f^n(x):K]$. In particular there exists an order $2$ and order $3$ elements in $\Gal(K_f^n/K_f^{n-1})$
\end{corollary}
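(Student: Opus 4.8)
The plan is to prove the two assertions separately: first the global divisibility $6^n \mid [K_f^n(x):K]$, and then the existence of elements of order $2$ and order $3$ in the top layer $\Gal(K_f^n(x)/K_f^{n-1}(x))$.

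For the divisibility I would exploit that $K_f^n(x)/K$ is Galois, so for any prime $\mathfrak{Q}$ of $K_f^n(x)$ over $\mathrm{q}$ the ramification index $e(\mathfrak{Q}/\mathrm{q})$ is the common value occurring in $efg=[K_f^n(x):K]$, whence $e(\mathfrak{Q}/\mathrm{q})\mid[K_f^n(x):K]$. Fix a root $y\in f^{-n}(x)$; by Lemma~\ref{lemma:ramification index} there is a prime $\mathrm{q}'$ of $K(y)\subseteq K_f^n(x)$ with $e(\mathrm{q}'/\mathrm{q})=3^n$, and multiplicativity in the tower $K\subseteq K(y)\subseteq K_f^n(x)$ gives $3^n\mid e(\mathfrak{Q}/\mathrm{q})\mid[K_f^n(x):K]$. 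The same argument at $\mathrm{p}$ yields $2^n\mid[K_f^n(x):K]$, and coprimality of $2^n$ and $3^n$ gives $6^n\mid[K_f^n(x):K]$.

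For the order statement, since $\Gal(K_f^n(x)/K_f^{n-1}(x))$ is finite of order $[K_f^n(x):K_f^{n-1}(x)]$, by Cauchy it suffices to produce $2$ and $3$ as divisors of this order, and I would do so locally. Let $\mathfrak{Q}$ lie over $\mathrm{q}$, put $\mathfrak{Q}'=\mathfrak{Q}\cap K_f^{n-1}(x)$, and pass to completions $L=(K_f^n(x))_{\mathfrak{Q}}$ and $L'=(K_f^{n-1}(x))_{\mathfrak{Q}'}$, which are the splitting fields of $f^n(z)-x$ and $f^{n-1}(z)-x$ over $K_{\mathrm{q}}$, with $\Gal(L/L')\le\Gal(K_f^n(x)/K_f^{n-1}(x))$. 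Using the shift $f(w+1)=2w^3+3w^2$ (so $f(z)\equiv-(z-1)^3\bmod\mathrm{q}$), a Newton-polygon computation over $K_{\mathrm{q}}(\beta)$, for $\beta$ a root of $f^{n-1}(z)-x$, shows that $f(z)-\beta$ has a single-slope polygon whose roots $\gamma\in f^{-n}(x)$ satisfy $v(\gamma-c)=1/3^n$ for the appropriate $c\in\{0,1\}$. Thus, provided $1/3^n\notin v((L')^{\times})$, the cubic $f(z)-\beta$ is irreducible over $L'$, forcing $3\mid[L:L']$ and hence an element of order $3$ in $\Gal(L/L')$. The prime $\mathrm{p}$ is handled identically, using $f\equiv(z+1)^2\bmod\mathrm{p}$ and the degree-$2$ ramified segment of the Newton polygon (roots of valuation $1/2^n$, together with one ``escaping'' root of negative valuation coming from the degree drop), giving an element of order $2$.

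Everything therefore reduces to the numerical input $1/3^n\notin v((L')^{\times})$, equivalently that the $3$-part of the ramification index of $\mathrm{q}$ in $K_f^{n-1}(x)$ is exactly $3^{n-1}$ (and, at $\mathrm{p}$, that the $2$-part is exactly $2^{n-1}$); the lower bounds are immediate from Lemma~\ref{lemma:ramification index}, so the real content is the upper bound, and this is the \emph{main obstacle}. Although each root $\beta$ of $f^{n-1}(z)-x$ individually generates a totally ramified extension of value group $\tfrac{1}{3^{n-1}}\mathbb{Z}$, a priori the compositum defining the splitting field could accumulate more wild $3$-ramification. I would prove the bound by induction on $k$, tracking the value group of the local splitting field $L_k$ of $f^k(z)-x$: granting that the $3$-part of $v(L_{k-1}^{\times})$ is $3^{k-1}$, the first level-$k$ root adjoined has valuation $1/3^k\notin v(L_{k-1}^{\times})$ and contributes exactly one new factor of $3$, while every subsequent level-$k$ root already has its valuation in the value group and so generates only a tame or unramified step. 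Controlling this last point is the delicate part, to be handled via the residual polynomials and the fact that $f$ reduces to a purely inseparable map modulo both $\mathrm{p}$ and $\mathrm{q}$ (so that residue fields grow in a bounded way); the discriminant recursion of Lemma~\ref{lemma:discriminant}, Equation~\ref{eq:disc}, gives an independent handle on the different that I would use to corroborate the estimate. Finally, the two cases $v_{\mathrm{q}}(x)=1$ and $v_{\mathrm{q}}(1-x)=1$ (and similarly at $\mathrm{p}$) are interchanged by the $2$-cycle $\{0,1\}$ of critical points of $f$ and are treated in the same way.
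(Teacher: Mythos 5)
Your first half is fine: for the divisibility $6^n\mid[K_f^n(x):K]$ you use that in a Galois extension the ramification index $e(\mathfrak{Q}/\mathrm{q})$ divides the degree, together with $e(\mathrm{q}'/\mathrm{q})=3^n$ from Lemma~\ref{lemma:ramification index} and multiplicativity in the tower $K\subseteq K(y)\subseteq K_f^n(x)$; this is exactly the inductive input the paper has in mind, and the same at $\mathrm{p}$ gives the $2$-part.

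The second half has a genuine gap, and you have in fact located it yourself. Producing an order-$3$ element of $\Gal(K_f^n(x)/K_f^{n-1}(x))$ by your local/Newton-polygon argument requires that the slope $-1/3^{\,\cdot}$ not already lie in the value group of the splitting field one level down, i.e.\ an \emph{upper} bound on the $3$-part of the ramification of $\mathrm{q}$ in $K_f^{n-1}(x)$ (and likewise for the $2$-part at $\mathrm{p}$). Lemma~\ref{lemma:ramification index} only controls ramification in the stem field $K(y)$, not in the full splitting field, and since the residue characteristic equals the relevant prime the ramification is wild, so a compositum of degree-$3$ totally ramified extensions can very well have $3$-part of the ramification index larger than $3$; your assertion that ``every subsequent level-$k$ root generates only a tame or unramified step'' is precisely the point that needs proof, and you explicitly defer it (``to be handled via the residual polynomials\dots the discriminant recursion\dots to corroborate the estimate''). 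Note that the statement does not follow from $6^n\mid[K_f^n(x):K]$ plus Cauchy either, since one would still need $6\mid[K_f^n(x):K_f^{n-1}(x)]$, which is the same upper-bound issue. The paper itself does not close this gap internally --- its proof is a deferral to \cite{B-F-H-J-Y-2017}, where the needed control is obtained by propagating Condition~\ref{Condition} to the pair $(K_f^{n-1}(x),y)$ for $y\in f^{-(n-1)}(x)$, using the explicit structure of each splitting field as a small (degree at most $6$) extension of the corresponding stem field. To complete your argument you would need to prove that propagation (or an equivalent bound on the $2$- and $3$-parts of the ramification in $K_f^{n-1}(x)$), not merely corroborate it.
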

\begin{proof}
The proof is easy by induction, and we refer readers to~\cite{B-F-H-J-Y-2017} for more details. The order $2$ and $3$ elements exist because Cauchy Theorem.
\end{proof}

\begin{theorem}
Let $(K,x)$ satisfies Condition~\ref{Condition}. Then for $n\geq 1$ we have
\[
\Gal_f^n(x)\cong E_n.
\]
\end{theorem}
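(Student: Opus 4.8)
The strategy is to establish the two inclusions separately and to obtain the nontrivial one, $E_n\subseteq\Gal_f^n(x)$, by induction on $n$ carried out uniformly over all pairs satisfying Condition~\ref{Condition}. For the inclusion $\Gal_f^n(x)\subseteq E_n$ I would simply apply Theorem~\ref{main theorem 2}. The critical points of $f(z)=2z^3-3z^2+1$ are the roots $0,1$ of $f'(z)=6z(z-1)$, and since $f(0)=1$ and $f(1)=0$ the critical set $\mathcal{C}_f=\{0,1\}$ is periodic with $f(\mathcal{C}_f)=\mathcal{C}_f$; hence $L=0$ and $O=1$. As $d=3$ is odd and $L\leq 1$, part (1) of Theorem~\ref{main theorem 2} gives $\Gal_f^n(x)\subseteq E_n^{2O}=E_n^2=E_n$ for all $n\geq 1$.

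For the reverse inclusion the base case $n=1$ is immediate: $f(z)-x$ is irreducible of degree $3$, so $\Gal_f^1(x)$ is a transitive subgroup of $S_3=E_1$, and since $6\mid[K_f^1(x):K]$ by the preceding Corollary it must be all of $S_3$. For the inductive step I assume $\Gal_f^{n-1}(y)=E_{n-1}$ for every pair $(K',y)$ satisfying Condition~\ref{Condition} and prove $\Gal_f^n(x)=E_n$. The restriction $\res_{n-1}\colon\Gal_f^n(x)\to\Gal_f^{n-1}(x)=E_{n-1}$ is surjective, and the analogous map $\res_{n-1}\colon E_n\to E_{n-1}$ is also onto with kernel
\[
V:=\ker\big(\res_{n-1}\colon E_n\to E_{n-1}\big),\qquad |E_n|=|E_{n-1}|\cdot|V|.
\]
Thus it suffices to prove that the kernel $\Gal(K_f^n(x)/K_f^{n-1}(x))$, which is contained in $V$ because $\Gal_f^n(x)\subseteq E_n$, is in fact all of $V$.

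Unwinding the definition of $E_n$ with Proposition~\ref{prop:sgn}, the group $V$ consists of the tuples $(a_w)_{w}$ indexed by the level-$(n-1)$ vertices, with $a_w\in S_3$ acting on the three children of $w$, subject to one parity relation $\prod_{w\,\text{child of}\,u}\sgn(a_w)=1$ for each level-$(n-2)$ vertex $u$. These parity relations are precisely the constraints forced by $\disc(f^n-x)$ being a perfect square in $K_f^{n-1}(x)$, which holds here by Lemma~\ref{lemma:periodicdiscriminant}. To realize all of $V$ I would proceed fiberwise. By the induction hypothesis applied to each $y_i\in f^{-1}(x)$—which again satisfies Condition~\ref{Condition} by Lemma~\ref{lemma:ramification index}—the Galois group of the subtree below $y_i$ is the full $E_{n-1}$, so over $K_f^{n-1}(x)$ each cubic $f(z)-v$ (for $v$ a level-$(n-1)$ vertex) is an $S_3$-extension, giving the full $S_3$ on each fiber; the ramification data of Lemma~\ref{lemma:ramification index} (indices $2^n$ at $\mathrm{p}$ and $3^n$ at $\mathrm{q}$) supplies the order-$2$ and order-$3$ elements needed to see these local $S_3$'s. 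Finally $E_{n-1}$ acts transitively on every level of the tree (Lemma~\ref{lemma:transitive}), so conjugation transports the local data across all fibers and across all level-$(n-2)$ vertices $u$.

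The main obstacle is to show that the only relation among these fibers is the single forced parity relation per vertex $u$—equivalently, that the fiber extensions are as independent as possible over $K_f^{n-1}(x)$ once the discriminant square-class has been accounted for. Concretely one must rule out any further collapse that would place $\Gal(K_f^n(x)/K_f^{n-1}(x))$ in a proper subgroup of $V$; the potential coincidences are exactly the quadratic subextensions controlled by Lemma~\ref{lemma:quadraticextension}, and the content is that the various $\sqrt{\disc(f^m-v)}$ remain linearly disjoint except for the relation already recorded. I would settle this by a degree count: combining the upper bound $\Gal(K_f^n(x)/K_f^{n-1}(x))\subseteq V$ with a lower bound for $[K_f^n(x):K_f^{n-1}(x)]$ coming from the full local $S_3$'s together with the linear disjointness of their discriminant fields, forcing the equality $\Gal(K_f^n(x)/K_f^{n-1}(x))=V$. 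This is the step where the argument parallels, and ultimately reduces to, the corresponding computation in \cite{B-F-H-J-Y-2017}, and it is where all the care is required; once it is in place the induction closes and $\Gal_f^n(x)\cong E_n$ follows for every $n$.
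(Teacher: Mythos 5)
Your proposal takes essentially the same route as the paper: the upper bound $\Gal_f^n(x)\subseteq E_n^2$ from Theorem~\ref{main theorem 2} (with $L=0$, $O=1$), the lower bound from the order-$2$ and order-$3$ elements supplied by the ramification indices of Lemma~\ref{lemma:ramification index}, and an induction that propagates Condition~\ref{Condition} to preimages, with the decisive generation step deferred to the computation in \cite{B-F-H-J-Y-2017}. The paper's own proof is just a two-sentence appeal to that same citation, so your more detailed sketch of the kernel $V$ and the fiberwise $S_3$'s is, if anything, a fuller account of the identical argument.
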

\begin{proof}
In~\ref{Condition} they prove their theorem by notice that the existence of elements of order $2$ and $3$ which implies there is a two 2-cycle permutation in $\Gal_f^2(x)$ and a $3$-cycle in $\Gal_f^2(x)$ fixing $f^{-1}(x)$. Thus their proof works exactly the same in our case.
\end{proof}
\section{Questions}
There are some questions that the author cannot settle in this paper. We choose the polynomial $f(z)=2z^3-3z^2+1$ because the critical points of $f$, $\mathcal{C}_f=\{0,1,\infty\}$, satisfies $f(\mathcal{C}_f)=\mathcal{C}_f$. We can define a more general polynomial with the same critical portrait. Let $B(x)$ be an odd degree Belyi map that has three fixed critical points $0$, $1$ and $\infty$, and then $-B(z)+1$ will transpose $0$ and $1$ and fix $\infty$. By Theorem~\ref{main theorem 2} it is clear that the $n-$th dynamical subgroup of $-B(z)+1$ is a subgroup of $E_n^2$. Hence the question is "do we have an isomorphism?".
\begin{question}
Let $K$ be a number field. Let $f\in K[z]$ be an odd degree polynomial. Assume $\mathcal{C}_f=\{c_1,c_2,c_3\}$ with the critical portrait
\[
c_1\to c_2,\ c_2\to c_1,\ c_3\to c_3. 
\]
Then do we have $\Gal_f^n(\alpha)\cong E_n^2$ with a properly chosen basepoint $\alpha\in K$?
\end{question}
A possible approach is using the method introduced in~\cite{BEK2020}. The first step is to show that the Geometric Galois group $\Gal(B^n(z)-t/\overline{K}(t))$, where $t$ is transcendental over $K$, is isomorphic to $E_n^2$, and then deduce that the arithmetic Galois group is also isomorphic to $E_n^2$. The last step is to study the ramification condition of $K_f^n(\alpha)$ over $K$ to show that with a specified basepoint $\alpha$ we have the desired isomorphism.

My next question is the following.
\begin{question}
Is there a pair of PCF polynomial $f$ over $K$ and a basepoint $\alpha\in K$ such that $\Gal_f^n(\alpha)\cong F_n^{(m_1,m_2)}$?
\end{question}
The first thing we have to solve for this question is that we have to find a proper family of PCF maps that has tail length greater than but not equal to $1$. We didn't see an obvious target that satisfying the requirement. 

Finally we would like to know the chief series of $E_n^m$ for $m\geq 3$ and $F_n^{(m_1,m_2)}$. The difficulty of generalize our argument to other cases is the following observation. Recall Lemma~\ref{lemma:C2AutTn}. Let $H_n=\la X_n\ra\Aut(T_n)$ with $n\geq 1$. Then the quotient group $H_n/\la X_1\ra$ correspondent to the following exact sequence
\[
\{1\}\to\la X_1\ra\to X_n\to H_n\to \Aut(T_n)\to \{1\}.
\]
Notice that $\ker(\res_{n-1}:\Aut(T_n)\to\Aut(T_m))$ acts trivially on $\la X_n\ra/\la X_1\ra$. Thus we have
\[
H_n/\la X_1\ra\cong (\la X_n\ra/\la X_1\ra\times \ker(\res_{n-1}))\ltimes \Aut(T_{n-1}). 
\]
Note that both $\la X_n\ra/\la X_1\ra$ and $\ker(\res_{n-1})$ are normal subgroup and has trivial intersection. Thus we don't have a unique tower of normal subgroup satisfying Condition~\ref{Condition}. Thus our argument doesn't work. For $F_n^{(m_1,m_2)}$ the situation seems more complicate, and some part of our argument depends on the fact that $d$ is odd and we have even many $-1$. My question is that
\begin{question}
Can we count the number of chief series in $E_n^m$ for $m\geq 3$? Is there a way to bound the rank of $E_n^m$ from pure algebraic point of views? Do we have a unique chief series in $F_n^{(m_1,m_2)}$ for small $m_1$ and $m_2$?
\end{question}
\section*{Acknowledgments}
The author would like to thank Dr. Alexander Hulpke for very helpful discuss and comments, and Dr. Andrea Lucchini for useful comments on the rank of groups. The author also want to thank his advisor Dr. Thomas Tucker for various support.
\bibliographystyle{plain}
\bibliography{references}
\end{document}